\def\grad{\nabla}
\def\noi{\noindent}
\def\bw{\mathbf{w}}
\def\bx{\mathbf{x}}  
\def\bC{\mathbf{C}}
\def\bL{\mathbf{L}}
\def\cD{\mathcal{D}}
\def\cG{\mathcal{G}}
\def\cL{\mathcal{L}}
\def\cN{\mathcal{N}}
\def\cO{\mathcal{O}}
\def\cP{\mathcal{P}}
\def\cR{\mathcal{R}}
\def\cU{\mathcal{U}}
\def\cX{\mathcal{X}}
\def\cY{\mathcal{Y}}
\def\cZ{\mathcal{Z}}
\def\mE{\mathbb{E}}
\def\smskip{\smallskip}
\def\texitem#1{\par\smskip\noindent\hangindent 25pt
               \hbox to 25pt {\hss #1 ~}\ignorespaces}
\def\norm#1{\left\|#1\right\|}
\def\fprod#1{\left\langle#1\right\rangle}
\newcommand{\BEAS}{\begin{eqnarray*}}
\newcommand{\EEAS}{\end{eqnarray*}}
\newcommand{\BEA}{\begin{eqnarray}}
\newcommand{\EEA}{\end{eqnarray}}
\newcommand{\BEQ}{\begin{eqnarray}}
\newcommand{\EEQ}{\end{eqnarray}}
\newcommand{\BIT}{\begin{itemize}}
\newcommand{\EIT}{\end{itemize}}
\newcommand{\BNUM}{\begin{enumerate}}
\newcommand{\ENUM}{\end{enumerate}}
\newcommand{\BA}{\begin{array}}
\newcommand{\EA}{\end{array}}
\newcommand{\reals}{\mathbb{R}}
\DeclareMathOperator*{\argmin}{\arg\!\min}
\DeclareMathOperator*{\argmax}{\arg\!\max}
\newif\ifpagenumbering
\def\btheta{\boldsymbol{\theta}}
\newcommand{\ey}[1]{\textcolor{black}{#1}}
\newcommand{\eyh}[1]{\textcolor{black}{#1}}
\newcommand{\mma}[1]{\textcolor{black}{#1}}
\newtheorem{assumption}{Assumption}
\newtheorem{definition}{Definition}
\newtheorem{theorem}{Theorem}
\newtheorem{lemma}[theorem]{Lemma}
\newtheorem{corollary}[theorem]{Corollary}
\theoremstyle{remark}
\newtheorem{remark}{Remark}
\numberwithin{assumption}{section}
\numberwithin{definition}{section}
\numberwithin{theorem}{section}
\numberwithin{remark}{section}
\newcommand{\qed}{\mbox{}\hspace*{\fill}\nolinebreak\mbox{$\rule{0.7em}{0.7em}$}}
\newenvironment{proof}{\par{\noi \bf Proof: }}{\(\qed\) \par}
\begin{document}

\title{Single-loop Projection-free and Projected Gradient-based Algorithms for Nonconvex-concave Saddle Point Problems with Bilevel Structure}

\author{Mohammad Mahdi Ahmadi\thanks{Department of Systems and Industrial Engineering, The University of Arizona, Tucson, AZ, USA \qquad\{ahmadi@arizona.edu\}}
\quad Erfan Yazdandoost Hamedani\thanks{Department of Systems and Industrial Engineering, The University of Arizona, Tucson, AZ, USA \qquad\{erfany@arizona.edu\}}
\vspace{5mm}
}



\maketitle

\begin{abstract} 

In this paper, we explore a broad class of constrained saddle point problems with a bilevel structure, wherein the upper-level objective function is nonconvex-concave and smooth over compact and convex constraint sets, subject to a strongly convex lower-level objective function. This class of problems finds wide applicability in machine learning, encompassing robust multi-task learning, adversarial learning, and robust meta-learning. Our study extends the current literature in two main directions: (i) We consider a more general setting where the upper-level function is not necessarily strongly concave or linear in the maximization variable. (ii) While existing methods for solving saddle point problems with a bilevel structure are projected-based algorithms, we propose a one-sided projection-free method employing a linear minimization oracle. Specifically, by utilizing regularization and nested approximation techniques, we introduce a novel single-loop one-sided projection-free algorithm, requiring $\cO(\epsilon^{-4})$ iterations to attain an $\epsilon$-stationary solution, moreover, when the objective function in the upper-level is linear in the maximization component, our result improve to $\cO(\epsilon^{-3})$. Subsequently, we develop an efficient single-loop fully projected gradient-based algorithm capable of achieving an $\epsilon$-stationary solution within $\cO(\epsilon^{-5})$ iterations. This result improves to $\cO(\epsilon^{-4})$ when the upper-level objective function is strongly concave in the maximization component.
 Finally, we tested our proposed methods against the state-of-the-art algorithms for solving a robust multi-task regression problem to showcase the superiority of our algorithms.
\end{abstract}


\section{Introduction}\label{sec:intro}
\ey{Bilevel optimization is an important class of optimization problems where one optimization problem is nested within another. Many learning and inference problems take this form with two levels of hierarchy. On the other hand, saddle point (SP) problems, also known as min-max optimization, have received a lot of attention recently due to their versatile formulation covering a broad class of optimization problems, e.g., convex optimization with nonlinear conic constraints, which itself includes linear programming (LP), quadratic programming (QP), quadratically constrained quadratic programming
(QCQP), second-order cone programming (SOCP), and semidefinite programming (SDP) as its subclasses.}
From a broader perspective, in this paper, we study a saddle point problem with a bilevel structure (SPB) of the following form
\begin{align}\label{eq:main-prob}
    & \min_{x \in \mathcal{X}} \max_{y \in \mathcal{Y}}~ \mathcal L(x,y)\triangleq  \Phi(x,\theta^*(x),y) \\ 
    & \text{s.t.}~\theta^{*}(x) \in \argmin_{\theta} g(x, \theta) \nonumber
\end{align}
where \ey{$\Phi:\reals^n\times\reals^m\times \reals^d \to \reals$ is a continuously differentiable function such that $\Phi(x,\theta,\cdot)$ is concave for any given $x\in \cX$ and $\theta\in \reals^m$; $\cX\subset \reals^n$ and $\cY\subset\reals^d$ are convex and compact sets; moreover, $g: \reals^n \times \mathbb{R}^m \to \mathbb{R}$ is a continuously differentiable and strongly convex in $\theta$. This problem simplifies to bilevel optimization when the upper-level objective function $\Phi$ is separable in the maximization variable and it encompasses saddle point problems when there is no lower-level problem, i.e., $\Phi$ is independent of the second argument.}
The problem in \eqref{eq:main-prob} has a wide range of applications including robust multi-task learning \cite{michel2021balancing}, robust meta-learning \cite{gu2021nonconvex}, adversarial learning \cite{zhang2022revisiting}, representation learning and hyperparameter optimization \cite{gu2022min}, multi-task deep AUC maximization, and multi-task deep partial AUC maximization \cite{hu2022multi}.  
\subsection{Literature Review}
In this section, we will begin by briefly discussing research in bilevel optimization and SP problems. Then, we will touch upon existing methods for solving SPB problems.
\paragraph{Bilevel Optimization.} 
Researchers have proposed numerous algorithms for solving bilevel optimization problems since its initial introduction by Bracken and McGill in 1973 \cite{bracken1973mathematical}. 
In recent years, there has been growing research focus on applying \eyh{gradient-based methods for solving} bilevel problems, incorporating techniques such as implicit differentiation \cite{domke2012generic,ghadimi2018approximation,gould2016differentiating} and iterative differentiation \cite{franceschi2018bilevel,grazzi2020iteration,maclaurin2015gradient,shaban2019truncated}. 
\eyh{The majority of existing studies proving convergence rate guarantees have focused on unconstrained scenario 
\cite{chen2022single,ji2020provably,khanduri2021near,kwon2023fully,li2022fully,yang2023accelerating}, while fewer studies are considering constrained bilevel optimization problems, particularly those with constraints in the upper-level. In particular, researchers have proposed several methods, including projection-based \cite{ghadimi2018approximation,hong2023two} and projection-free \cite{abolfazli2023inexact,akhtar2022projection} algorithms for various settings to achieve complexity guarantees matching the single-level counterpart.} 

\paragraph{Saddle Point (SP) Problem.} In the context of saddle point (SP) problems, there have been many studies on the convex-concave setting \cite{chambolle2016ergodic,hamedani2021primal,nemirovski2004prox,ouyang2015accelerated}, while there is an increasing number of studies concentrating on developing algorithms for nonconvex-concave setting \cite{boroun2024projection,lin2020gradient,ostrovskii2021efficient,zhang2020single}. In the nonconvex-concave setting, \eyh{various approaches including smoothing \cite{zhang2020single} and acceleration \cite{ostrovskii2021efficient} techniques have been porposed}.
Most of these methods are based on solving regularized subproblems inexactly, leading to a multi-loop algorithm that can present implementation challenges. To overcome such complexities, there has been a growing interest in single-loop primal-dual methods. 
In particular, the authors in \cite{xu2023unified} introduce a unified single-loop alternating gradient projection (AGP) algorithm designed for solving smooth nonconvex-(strongly) concave problems. Their algorithm can find an $\epsilon$-stationary solution with a complexity of $\mathcal{O}(\epsilon^{-2})$ and $\mathcal{O}(\epsilon^{-4})$ under the nonconvex-strongly concave and nonconvex-concave settings, respectively. In the optimization literature, limitations in projection-based techniques have prompted researchers to explore efficient projection-free approaches \cite{frank1956algorithm,kolmogorov2021one,lan2013complexity,lan2017conditional}. In the context of saddle point (SP) problems, a limited number of studies have been conducted on projection-free SP problems \cite{boroun2024projection,chen2020efficient,gidel2017frank}. In particular, authors in \cite{boroun2024projection} propose fully and one-sided projection-free algorithms designed to solve an SP problem with a smooth nonconvex-concave objective function. Their proposed method can achieve an $\epsilon$-stationary solution within $\mathcal{O}(\epsilon^{-6})$ and $\mathcal{O}(\epsilon^{-4})$ iterations for fully and one-sided projection-free methods, respectively.
\paragraph{Saddle Point Problem with Bilevel Structure.} Within the existing literature, the focus on the SPB problem, as presented in problem \eqref{eq:main-prob}, is very limited. 
\eyh{In \cite{hu2022multi} two simple, single-loop, single-timescale stochastic methods with randomized block-sampling have been proposed to solve \eqref{eq:main-prob}. Considering an unconstrained setting and assuming that $\Phi$ is smooth, $\Phi(x,\theta,\cdot)$ is strongly concave for any $x,\theta$, and $g(x,\cdot)$ is strongly convex for any $x$, they show that their proposed algorithm converges to an $\epsilon$-stationary point with a complexity of $\mathcal{O}(\epsilon^{-4})$ under a general unbiased stochastic oracle model. In another study, \cite{gu2021nonconvex} motivated by a robust meta-learning problem, consider a bilevel optimization with minimax upper-level objective function, i.e., $\min_{x\in\cX}\max_{i=1}^n f_i(x,\theta_i^*(x))$. This problem can be reformulated as a special case of \eqref{eq:main-prob} by letting $\Phi(x,\theta,y)=\sum_{i=1}^ny_if_i(x,\theta_i)$, i.e., $\Phi$ is assumed to be linear in $y$. Assuming that $\Phi$ is smooth, their proposed algorithm can achieve a complexity of $\cO(\epsilon^{-5})$.} 
\eyh{In recent studies \cite{gu2021nonconvex,gu2022min}, a minimax multi-objective bilevel optimization problem with applications in robust machine learning is explored. They propose a single-loop Multi-Objective Robust Bilevel Two-timescale optimization (MORBiT) algorithm for solving \eqref{eq:main-prob} in a deterministic regime when $\Phi$ is linear in $y$.} Their algorithm converges to an $\epsilon$-stationary solution at a rate of $\cO(\epsilon^{-5})$.

\subsection{Contribution}
\eyh{In this paper, we consider a broad class of constrained SP problems with a bilevel structure, where the upper-level objective function is smooth over compact and convex constraint sets, subject to a strongly convex lower-level objective function. While the existing methods for solving \eqref{eq:main-prob} focus on special cases of this problem when either $\Phi$ is linear or strongly concave in $y$, we consider a more general setting where the $\Phi(x,\theta,\cdot)$ is a concave function for any $x,\theta$. Utilizing regularization and nested approximation, we propose novel single-loop inexact bilevel primal-dual algorithms featuring only two matrix-vector products at each iteration. To contend with the challenge of projection onto the constraint set $\cX$, we propose a one-sided projection-free method utilizing a linear minimization oracle.} 
\eyh{Our main theoretical guarantees for the proposed methods are summarized as follows:
\begin{itemize}
    \item The proposed Inexact Bilevel Regularized Primal-dual One-sided Projection-free (i-BRPD:OPF) method achieves an $\epsilon$-stationary solution within $\mathcal{O}(\epsilon^{-4})$ iterations. This complexity result improves to $\cO(\epsilon^{-3})$ when $\Phi$ is linear in $y$. 
    \item The proposed Inexact Bilevel Regularized Primal-dual Fully Projected (i-BRPD:FP) method requires $\mathcal{O}(\epsilon^{-5})$ iterations to find an $\epsilon$-stationary point. \mma{This complexity result improves to $\cO(\epsilon^{-4})$ when $\Phi$ is strongly concave in $y$}. 
\end{itemize}
To the best of our knowledge, the proposed methods and their convergence guarantee are the first to appear in the literature for the considered setting.}
\subsection{Motivating Examples}\label{subsec:examples}


\ey{The SPB problem in \eqref{eq:main-prob} presents a versatile formulation covering various applications arising} in machine learning such as adversarial learning \cite{zhang2022revisiting}, representation learning and hyperparameter optimization \cite{gu2022min}, robust multi-task learning \cite{michel2021balancing}, robust meta-learning \cite{gu2021nonconvex}, and multi-task deep AUC maximization \cite{hu2022multi}. Next, we describe two examples in detail.

\textbf{Robust Multi-task Learning:} 
\ey{Multi-task learning (MTL) is a machine learning approach that focuses on jointly learning multiple related tasks \cite{caruana1997multitask}. The main objective is to share knowledge across these tasks, allowing them to benefit from each other's insights. More specifically, consider a set of $T$ tasks denoted by $\{\mathcal{T}_i\}_{i=1}^T$, each equipped with a training dataset $\{\mathcal{D}_i^{tr}\}_{i=1}^T$ containing features residing in the same space. In the context of these tasks, we employ task-specific loss functions $\{\ell_i(x,\theta_i,\mathcal{D}_i^{tr})\}_{i=1}^T$ where $\theta_i\in \mathbb{R}^m$ represents task-specific model parameters, and $x$ is the shared parameter. Although many formulations and loss functions have been proposed in the MTL literature, they generally yield a weighted empirical risk minimization $\min_{\theta=[\theta_i]_{i=1}^T}\sum_{i=1}^T y_i\ell_i(x,\theta_i;\mathcal D_i^{tr})$, where the weights $\{y_i\}_{i=1}^T$ play a crucial role in the performance of the model as more favor is given to the tasks with higher weights. Therefore, it is natural to consider learning the weights through a robust optimization formulation \cite{chen2022gradient,shu2019meta} and learning the shared parameter $x$ through the lens of bilevel optimization \cite{gu2021nonconvex}. This problem can be uniformly formulated as
\begin{align*}
	&\min_{x \in \mathcal{X}}\max_{y \in \cU} \sum_{i=1}^{T} y_i \ell_i(x, \theta_i^*(x); \cD_i^{val}) \\
&\text{s.t.} \quad \theta_i^*(x) = \argmin_{\theta_i \in \mathbb{R}^{m}}\ell_i(x,\theta_i; \cD_i^{tr}) + \cR(\theta_i), \quad i \in \{1,\hdots,T\}, \nonumber
\end{align*}
where $\cU\subset\reals^T$ is the weight constraint set, e.g., $\cU=\Delta_T$ and $\Delta_T$ denotes the simplex set defined as $\Delta_T \triangleq \{y \in \mathbb{R}^T_+ | \sum_{i=1}^T y_i= 1 \}$, $\cD_i^{val}$ is the validation dataset for task $i$, and $\cR:\reals^m\to\reals$ is a regularization function, e.g., $\cR(\theta_i)=\frac{\rho}{2}\norm{\theta_i}^2$ for some $\rho>0$.}

\textbf{Adversarial Learning:} 
\ey{As safeguarding machine learning models against adversarial attacks becomes increasingly vital,  adversarial learning emerges as a pivotal strategy in fortifying these models. By exposing models to adversarial examples during training, adversarial training (AT) \cite{goodfellow2014explaining,madry2017towards} significantly enhances their robustness against potential attacks. }
Recently, in \cite{zhang2022revisiting}, the authors propose a bilevel formulation of the AT, wherein they aim to minimize the expectation of the training loss function at the upper-level formulated as $\min_{x\in\cX}\mE_{\cD^{tr}\sim\cP}[\ell(x,\theta^*(x);\cD^{tr})]$ while minimizing the attack loss function at the lower-level formulated as $\theta^*(x)\in\argmin_{\theta}\ell_{atk}(x,\theta)$. In this formulation, the upper-level involves an expectation that assumes knowledge of the data distribution. However, in real-world applications, such knowledge may not be available. To address this, one standard approach is to utilize the distributionally robust optimization (DRO) \cite{namkoong2016stochastic} technique. Instead of relying on a specific assumed distribution, DRO considers a set of possible distributions, represented by an uncertainty set. 
Therefore, \ey{integrating a DRO formulation within bilevel AT we can formulate the following robust adversarial learning problem:} 
\begin{align*}
    &\min_{x \in \mathcal{X}} \max_{p=[p_i]_{i=1}^N \in \mathcal{U}} ~\sum_{i=1}^N p_i {\ell}(x, \theta^*(x);\cD^{tr}_i) \\
&\text{s.t.} \quad \theta^*(x) \in \argmin_{\theta} \ell_{atk}(x, \theta) \nonumber,
\end{align*}
where 
$\mathcal{U}\subset\reals^N$ denotes the uncertainty set. \ey{Furthermore, when the uncertainty set is represented by a divergence measure, e.g., $\cU=\{p\in\Delta_N\mid V(p,\frac{1}{N}\mathbf{1}_N)\leq \rho\}$ for some divergence metric $V$ and $\rho>0$, then projection onto $\cU$ is computationally expensive. Therefore, using the Lagrangian method and introducing variable $\lambda\in\reals_+$ we can relax the nonlinear constraint in $\cU$ to equivalently represent the problem as 
\begin{align}\label{eq:ad-general}
    &\min_{\substack{x \in \mathcal{X}\\ \lambda\geq 0}} \max_{p\in\Delta_N} ~\sum_{i=1}^N p_i {\ell}(x, \theta^*(x);\cD^{tr}_i)]-\lambda\big(V(p,\tfrac{1}{N}\mathbf{1}_N)-\rho\big) \\
&\text{s.t.} \quad \theta^*(x) \in \argmin_{\theta} \ell_{atk}(x, \theta) \nonumber.
\end{align}
Indeed, one can easily verify that \eqref{eq:ad-general} is a special case of \eqref{eq:main-prob}. Note that the upper-level objective function in \eqref{eq:ad-general} is neither linear in the maximization component $p$ nor strongly concave.}
\section{Preliminaries}\label{sec:pre}
\subsection{Assumptions and Definitions}
In this section, we state the necessary assumptions required for the upper-level min-max objective function $\Phi$, the lower-level objective function $g$, and the constraint sets $\cX$ and $\cY$. Moreover, we provide definitions essential for analyzing our proposed methods throughout the paper. 

The upper-level objective function $\Phi$ is smooth and concave in $y$.
\begin{assumption}[Upper-level min-max objective function]\label{assump:grad-xytheta-lip}
    $\Phi:\reals^n\times\reals^m\times\reals^d \to \reals$ is a continuously differentiable function such that for any given $x,\theta$, $\Phi(x,\theta,\cdot)$ is a concave function. Moreover, 
   $\nabla_x \Phi(x,\theta,y)$, $\nabla_\theta \Phi(x,\theta,y)$, and $\nabla_y \Phi(x,\theta,y)$ are Lipschitz continuous w.r.t $(x,\theta,y)$ such that for any $x,\overline{x}\in \reals^n$ and $\theta, \overline{\theta}\in \mathbb{R}^m$, and $y, \overline{y} \in \reals^d$ 
   \begin{enumerate}
    \item $\left\| \nabla_x \Phi(x,\theta,y) - \nabla_x \Phi(\overline{x},\overline{\theta},\overline{y}) \right\| \leq L_{xx}^\Phi \left\| x - \overline{x} \right\| + L_{x\theta}^\Phi \left\| \theta - \overline{\theta}\right\| + L_{xy}^\Phi \left\|y - \overline{y} \right\|$,
 \item $\left\| \nabla_\theta \Phi(x,\theta,y) - \nabla_\theta \Phi(\overline{x},\overline{\theta},\overline{y}) \right\| \leq L_{\theta x}^\Phi \left\| x - \overline{x} \right\| + L_{\theta\theta}^\Phi \left\| \theta - \overline{\theta}\right\|+ L_{\theta y}^\Phi \left\|y - \overline{y} \right\|$,
 \item $\left\| \nabla_y \Phi(x,\theta,y) - \nabla_y \Phi(\overline{x},\overline{\theta},\overline{y}) \right\| \leq L_{yx}^\Phi \left\| x - \overline{x} \right\| + L_{y\theta}^\Phi \left\| \theta - \overline{\theta}\right\| + L_{yy}^\Phi \left\|y-\overline{y} \right\|$.
 \end{enumerate}
\end{assumption}

Moreover, we impose the following common assumptions on the lower-level objective function $g$.
\begin{assumption}[Lower-level objective function]\label{assump:g-conditions}
    $g(x,\theta)$ satisfies the following conditions:
    \begin{enumerate}
    \item For any given $x \in \mathcal{X}, g(x,\cdot)$ is twice continuously differentiable. Moreover, $\nabla_\theta g(\cdot,\cdot)$ is continuously differentiable.
\item For any given $x \in \mathcal{X}, \nabla_\theta g(x, \cdot)$ is Lipschitz continuous with constant $L_g \geq0$. Moreover, for any $\theta \in \mathbb{R}^m, \nabla_\theta g(\cdot,\theta)$ is Lipschitz continuous with constant $C_{\theta x}^g \geq0$.
\item For any $x \in \mathcal{X}, g(x, \cdot)$ is $\mu_g$-strongly convex with modulus $\mu_g>0$.
\item For any given $x \in \mathcal{X}, \nabla^2_{\theta x} g(x, \theta) \in \mathbb{R}^{n\times m}$ and $\nabla^2_{\theta\theta} g(x,\theta)$ are Lipschitz continuous w.r.t $(x,\theta) \in \mathcal{X}\times \mathbb{R}^m$, and with constant $L_{\theta x}^g \geq0$ and $L_{\theta\theta}^g \geq0$, respectively.
\end{enumerate}
\end{assumption}
\begin{remark}\label{remak:hessiang-conditions}
    Considering Assumption \ref{assump:g-conditions}-(2), we can conclude that $\left\|\nabla^2_{\theta x}g(x,\theta) \right\|$ is bounded with constant $C^g_{\theta x}$ for any $(x,\theta) \in \mathcal{X}\times \mathbb{R}^m$.\\
\end{remark}

\begin{assumption}\label{assump:grad-phi-bounded}
    There exists $C_\theta^\Phi \geq0$ such that $\left\| \nabla_\theta \Phi(x, \theta^*(x),y) \right\| \leq C_\theta^\Phi$.
\end{assumption}

\begin{assumption}\label{assump:Dx-compact}
    $\cX\subset\reals^n$ and $\cY\subset \reals^d$ are a convex and compact sets with diameters $D_{\mathcal{X}}$ and $D_\cY$, respectively, i.e., $D_{\mathcal{X}} \triangleq sup _{x,\overline{x}\in \mathcal{X}} \left\| x-\overline{x} \right\|_{\mathcal{X}}$ and $D_{\mathcal{Y}} \triangleq sup _{y,\overline{y}\in \mathcal{Y}} \left\| y-\overline{y} \right\|_{\mathcal{Y}}$.
\end{assumption}
In this paper, to handle the primal constraint set $\mathcal{X}$, we use two different oracles, namely a linear minimization oracle (LMO) to find $s=\argmin_{x\in \mathcal{X}}\fprod{x,\bar x}$ for any given $\bar x\in\mathbb R^n$ and a projection oracle (PO) to find $s=\argmin_{x\in \mathcal{X}}\norm{x-\bar x}$ for any given $\bar x\in\mathbb R^n$. Consequently, to measure the quality of solutions generated by the proposed algorithms, we define two different gap functions. The first gap function will utilize LMO for the primal part and the generalized gradient norm for the dual part while the second one will use the generalized gradient norm for both the primal and dual parts.
\begin{definition}\label{def:gap-func-Alg1}
     The stationary gap function $\mathcal{G}_{\mathcal{X}}: \mathcal{X} \times \mathcal{Y} \rightarrow \mathbb R$ for the primal part of Problem \eqref{eq:main-prob} is defined as $\mathcal{G}_{\mathcal{X}}(x,y) \triangleq \sup_{s\in \cX} \langle \grad_x\cL(x,y), x - s \rangle$. For the dual part of Problem \eqref{eq:main-prob}, the stationary gap function $\mathcal{G}_{\mathcal{Y}}: \mathcal{X} \times \mathcal{Y} \rightarrow \mathbb R$ is defined as $\mathcal{G}_{\mathcal{Y}}(x,y) \triangleq \frac{1}{\sigma}\left\| y - \mathcal{P}_{\mathcal{Y}}(y + \sigma \grad_y\cL(x,y))\right\|$. Moreover, we define $\mathcal{G}_{\mathcal{Z}}(x,y) \triangleq \mathcal{G}_{\mathcal{X}}(x,y) + \mathcal{G}_{\mathcal{Y}}(x,y)$.
\end{definition}
\begin{definition}\label{def:gap-func-Alg2}
      The stationary gap function $\mathcal{G}_{\mathcal{X}}: \mathcal{X} \times \mathcal{Y} \rightarrow \mathbb R$ for the primal part of Problem \eqref{eq:main-prob} is defined as \mma{$\cG_\cX(x,y)\triangleq\frac{1}{\tau} \norm{x-\cP_{\cX}(x-\tau \nabla_x \cL(x,y))}$}. The dual gap function is defined similarly to Definition \ref{def:gap-func-Alg1}. Additionally, we define $\mathcal{G}_{\mathcal{Z}}(x,y) \triangleq \mathcal{G}_{\mathcal{X}}(x,y) + \mathcal{G}_{\mathcal{Y}}(x,y)$ where $\mathcal{G}_{\mathcal{Y}}$ is introduced in Definition \ref{def:gap-func-Alg1}.
\end{definition}
\begin{definition}
    For a given gap function $\mathcal{G}_{\mathcal{Z}}:\mathcal{Z} \rightarrow \mathbb R$, where $\mathcal{Z} \triangleq \mathcal{X} \times \mathcal{Y}$, we call a point $(x,y) \in \mathcal{Z}$ an $\epsilon$-stationary solution for Problem \eqref{eq:main-prob} if $\mathcal{G}_{\mathcal{Z}}(x,y)\leq \epsilon$.
\end{definition}
\begin{remark}
    With a slight abuse of notation, we utilized $\mathcal{G}_{\mathcal{X}}$ for both the Linear Minimization Oracle (LMO) and the Projection Oracle (PO) with respect to the constraint set $\mathcal{X}$. In fact, both $\epsilon$-solutions imply an $\epsilon$-game stationary solution. Specifically, if $\mathcal{G}_{\mathcal{Z}}(x,y)\leq \epsilon$ holds for some $(x,y)\in \mathcal{Z}$, then 
    \begin{align*}
        & 0 \in \partial \mathds{1}_{\mathcal{X}}(x) + \nabla_x \mathcal{L}(x,y) + u, \quad \text{s.t.} \quad u \in \mathcal{B}_{\mathbb{R}^n}(0,\epsilon)\\
        & 0 \in \partial \mathds{1}_{\mathcal{Y}}(y) - \nabla_y \mathcal{L}(x,y) + v, \quad \text{s.t.}\quad  v \in \mathcal{B}_{\mathbb{R}^d}(0,\epsilon),
    \end{align*}%
    where we define $\mathcal{B}_{\mathcal{U}}(\bar u,r) \triangleq \big\{u \in \mathcal{U} \big| \|u-\bar u\|\leq r\big\}$ for a given vector space $\mathcal{U}$ and $\bar u\in\cU$.
\end{remark}

\subsection{Problem Properties}\label{subsec:proposed-method}
Problem \eqref{eq:main-prob} can be viewed as an implicit SP problem, i.e., $\min_{x\in\cX}\max_{y\in\cY}\cL(x,y)$. Therefore, it is natural to first investigate the two implicit partial gradients of the objective function: one corresponding to the minimization variable $x$ and the other to the maximization variable $y$. More specifically, due to the uniqueness of the solution and smoothness of the lower-level objective function one can show that
\begin{subequations}
\begin{align}
    & \nabla_x\mathcal L(x,y) =\nabla_x \Phi(x,\theta^*(x),y) + {\bf J}\theta^*(x)\nabla_\theta\Phi(x,\theta^*(x),y) \label{eq:gradx-L-jacob}\\
    & \nabla_y\mathcal L(x,y) = \nabla_y \Phi(x,\theta^*(x),y), \label{eq:grady-L}
\end{align}
\end{subequations}
where ${\bf J}\theta^*(x) \in \mathbb R^{m \times n}$ is the Jacobian of $\theta^*(x)$. The Jacobian matrix can be characterized based on the implicit function theorem for the lower-level problem by writing its first-order optimality condition, namely $\nabla_\theta g(x,\theta^*(x)) = 0$ for any $x\in\reals^n$. 
By taking the derivative with respect to $x$, 
one can obtain ${\bf J}\theta^*(x) = -\nabla_{\theta x}^2 g(x,\theta^{*}(x)) [\nabla_{\theta\theta}^2 g(x,\theta^{*}(x))]^{-1}$. 
Substituting this relation in \eqref{eq:gradx-L-jacob}, we have
\begin{align*}
    \nabla_x\mathcal L(x,y) =\nabla_x \Phi(x,\theta^*(x),y) -\nabla_{\theta x}^2 g(x,\theta^{*}(x)) [\nabla_{\theta\theta}^2 g(x,\theta^{*}(x))]^{-1} \nabla_\theta\Phi(x,\theta^*(x),y).
\end{align*}
which can be rewritten as follows
\begin{subequations}
\begin{align}\label{eq:grad-x-L}
    & \nabla_x\mathcal L(x,y) =\nabla_x \Phi(x,\theta^*(x),y) -\nabla_{\theta x}^2 g(x,\theta^{*}(x)) v(x,y), \\
    & \text{where}\quad  v(x,y) = [\nabla_{\theta\theta}^2 g(x,\theta^{*}(x))]^{-1} \nabla_\theta\Phi(x,\theta^*(x),y). \label{eq:v}
\end{align}
\end{subequations}
The above formulation presents several challenges inherent to bilevel optimization problems, notably finding $\theta^*(x)$ and the inversion of the Hessian matrix. Furthermore, our SP structure adds another layer of complexity compared to typical bilevel optimization setups. This complexity stems from the interplay, both explicit and implicit, between the maximization variable $y$, the primal variable $x$, and the lower-level optimal solution $\theta^*(x)$, as represented in equation \eqref{eq:grady-L}. 

Based on the considered setting and assumptions, we first establish several important properties of the problem. In particular, we first establish the Lipschitz continuity of map $v$ in both primal and dual variables. Consequently, we can show the Lipschitz continuity of the lower-level optimal solution and partial derivatives of the implicit objective function $\mathcal L$. These results serve as the cornerstone for the analysis of our proposed methods in the following section.
\begin{lemma}\label{lem:lower-v}
    Suppose Assumptions \ref{assump:grad-xytheta-lip} and \ref{assump:g-conditions} hold. Then for any $x, \overline{x} \in \mathcal{X}$ and $y, \overline{y} \in \mathcal{Y}$, we have that 
    $\left\| v(x,y)-v(\overline{x},\overline{y}) \right\| \leq \bC_{v1}\left\| x -\overline{x} \right\| + \bC_{v2}\left\| y -\overline{y} \right\|$ where $\bC_{v1} \triangleq  \frac{L_{\theta x}^\Phi + L_{\theta \theta}^\Phi\bL_\theta}{\mu_g} + \frac{C_\theta^\Phi L_{\theta\theta}^g}{\mu^2_g} (1+\bL_\theta)$ and $\bC_{v2} \triangleq \frac{1}{\mu_g}L_{\theta y}^\Phi$. 
\end{lemma}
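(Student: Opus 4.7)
The plan is to decompose the difference via
\begin{equation*}
v(x,y)-v(\bar x,\bar y) = H(x)\bigl[\phi(x,y)-\phi(\bar x,\bar y)\bigr] + \bigl[H(x)-H(\bar x)\bigr]\phi(\bar x,\bar y),
\end{equation*}
where I use the shorthands $H(x)\triangleq[\nabla_{\theta\theta}^2 g(x,\theta^*(x))]^{-1}$ and $\phi(x,y)\triangleq\nabla_\theta\Phi(x,\theta^*(x),y)$. I would bound the two summands separately, each producing a linear combination of $\|x-\bar x\|$ and $\|y-\bar y\|$. A preliminary ingredient used throughout is the Lipschitz continuity of $\theta^*$ with constant $\bL_\theta$, which follows from the implicit differentiation formula $\mathbf{J}\theta^*(x)=-[\nabla^2_{\theta\theta}g(x,\theta^*(x))]^{-1}\nabla^2_{\theta x}g(x,\theta^*(x))$ together with $\|H(x)\|\le 1/\mu_g$ and Remark \ref{remak:hessiang-conditions}.

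For the first summand, strong convexity of $g(x,\cdot)$ from Assumption \ref{assump:g-conditions}-(3) gives $\|H(x)\|\le 1/\mu_g$, and Assumption \ref{assump:grad-xytheta-lip}-(2) combined with the Lipschitzness of $\theta^*$ yields
\begin{equation*}
\|\phi(x,y)-\phi(\bar x,\bar y)\|\le \bigl(L_{\theta x}^\Phi + L_{\theta\theta}^\Phi \bL_\theta\bigr)\|x-\bar x\|+L_{\theta y}^\Phi\|y-\bar y\|.
\end{equation*}
Dividing by $\mu_g$ contributes $(L_{\theta x}^\Phi+L_{\theta\theta}^\Phi\bL_\theta)/\mu_g$ to the $\|x-\bar x\|$ coefficient and the full constant $\bC_{v2}=L_{\theta y}^\Phi/\mu_g$ on $\|y-\bar y\|$.

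For the second summand, I would invoke the resolvent identity $A^{-1}-B^{-1}=A^{-1}(B-A)B^{-1}$ with $A=\nabla^2_{\theta\theta}g(x,\theta^*(x))$ and $B=\nabla^2_{\theta\theta}g(\bar x,\theta^*(\bar x))$; combined with $\|A^{-1}\|,\|B^{-1}\|\le 1/\mu_g$ this gives $\|H(x)-H(\bar x)\|\le \mu_g^{-2}\|A-B\|$. Then Assumption \ref{assump:g-conditions}-(4) yields $\|A-B\|\le L_{\theta\theta}^g\bigl(\|x-\bar x\|+\|\theta^*(x)-\theta^*(\bar x)\|\bigr)\le L_{\theta\theta}^g(1+\bL_\theta)\|x-\bar x\|$. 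Multiplying by $\|\phi(\bar x,\bar y)\|\le C_\theta^\Phi$ from Assumption \ref{assump:grad-phi-bounded} delivers the remaining term $\tfrac{C_\theta^\Phi L_{\theta\theta}^g}{\mu_g^2}(1+\bL_\theta)\|x-\bar x\|$, and summing the two contributions recovers $\bC_{v1}$ exactly.

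The principal technical step is the uniform control of the Hessian-inverse perturbation: strong convexity must be applied at both $x$ and $\bar x$ to validate the $\mu_g^{-2}$ factor, and the joint Lipschitzness of $\nabla^2_{\theta\theta}g$ in $(x,\theta)$ must be chained with the auxiliary Lipschitzness of $\theta^*$ to produce the $(1+\bL_\theta)$ factor. The remaining arithmetic is routine triangle-inequality bookkeeping that matches the stated constants.
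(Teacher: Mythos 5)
Your proposal is correct and follows essentially the same route as the paper: the same add-and-subtract decomposition into $H(x)[\phi(x,y)-\phi(\bar x,\bar y)]$ and $[H(x)-H(\bar x)]\phi(\bar x,\bar y)$, the same $1/\mu_g$ bound on the Hessian inverse, the same resolvent identity giving the $\mu_g^{-2}L_{\theta\theta}^g(1+\bL_\theta)$ factor, and the same use of Assumption \ref{assump:grad-phi-bounded}, recovering $\bC_{v1}$ and $\bC_{v2}$ exactly. The only cosmetic difference is that you derive the Lipschitz constant $\bL_\theta$ of $\theta^*$ from the implicit Jacobian formula, whereas the paper's Lemma \ref{lem:theta-gradxL-lip}-(I) obtains the same constant directly from strong convexity of $g(x,\cdot)$ and the optimality conditions; both are valid.
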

\begin{proof}
    See Appendix \ref{sec:proof-lemma-lip-v} for the proof.
\end{proof}
\begin{lemma}\label{lem:theta-gradxL-lip}
    Suppose Assumptions \ref{assump:grad-xytheta-lip}, \ref{assump:g-conditions} hold. Then for any $x, \overline{x} \in \mathcal{X}$ and $y, \overline{y} \in \mathcal{Y}$, the following results hold.\\
\textbf{(I)} There exists $\bL_\theta \geq0$ such that $\left\| \theta^*(x) - \theta^*(\overline{x}) \right\| \leq \bL_\theta \left\| x - \overline{x} \right\|$ where $\bL_\theta \triangleq \frac{C_{\theta x}^g}{\mu_g}$.\\
\textbf{(II)} There exists $\bL_{\mathcal L_1}, \bL_{\mathcal L_2} \geq0$ such that $\left\| \nabla_x\mathcal L(x,y) - \nabla_x\mathcal L(\overline{x},\overline{y}) \right\| \leq \bL_{\mathcal L_1} \left\| x - \overline{x} \right\| + \bL_{\mathcal L_2} \left\| y - \overline{y} \right\|$ where $\bL_{\mathcal L_1} \triangleq (L_{xx}^\Phi  + L_{x\theta}^\Phi \bL_\theta + C_{\theta x}^g \bC_{v1} + \frac{C_\theta^\Phi}{\mu_g} L_{\theta x}^g (1 + \bL_\theta)), \bL_{\mathcal L_2} \triangleq (L_{xy}^\Phi + C_{\theta x}^g \bC_{v2})$.\\
\textbf{(III)} $\| \nabla_y\mathcal L(x,y) - \nabla_y\mathcal L(\overline{x},\overline{y}) \| \leq (L_{yx}^\Phi + L_{y\theta}^\Phi \bL_\theta) \left\| x - \overline{x} \right\| + L_{yy}^\Phi \left\|y-\overline{y} \right\|$.
\end{lemma}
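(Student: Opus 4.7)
The proof splits into three parts, moving from the simplest to the most technical. Part (I) is the standard implicit-solution Lipschitz argument. Starting from the optimality condition $\nabla_\theta g(x,\theta^*(x))=0$ for every $x$, I would invoke $\mu_g$-strong convexity of $g(x,\cdot)$ to write
\[
\mu_g\|\theta^*(x)-\theta^*(\overline x)\|^2 \le \langle \nabla_\theta g(x,\theta^*(x))-\nabla_\theta g(x,\theta^*(\overline x)),\, \theta^*(x)-\theta^*(\overline x)\rangle,
\]
then substitute $\nabla_\theta g(x,\theta^*(x))=0=\nabla_\theta g(\overline x,\theta^*(\overline x))$ and apply Cauchy--Schwarz together with Assumption \ref{assump:g-conditions}-(2), which gives $\|\nabla_\theta g(x,\theta^*(\overline x))-\nabla_\theta g(\overline x,\theta^*(\overline x))\|\le C_{\theta x}^g\|x-\overline x\|$. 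Dividing through by $\|\theta^*(x)-\theta^*(\overline x)\|$ yields the constant $\bL_\theta=C_{\theta x}^g/\mu_g$.

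Part (III) is essentially immediate from \eqref{eq:grady-L}: since $\nabla_y\cL(x,y)=\nabla_y\Phi(x,\theta^*(x),y)$, applying Assumption \ref{assump:grad-xytheta-lip}-(3) with $\theta=\theta^*(x)$ and $\overline\theta=\theta^*(\overline x)$, and then substituting the bound from Part (I), directly delivers the stated inequality with no further constants to introduce.

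Part (II) is where the real work lies. Using \eqref{eq:grad-x-L}, I would split $\nabla_x\cL(x,y)-\nabla_x\cL(\overline x,\overline y)$ as
\[
\bigl[\nabla_x\Phi(x,\theta^*(x),y)-\nabla_x\Phi(\overline x,\theta^*(\overline x),\overline y)\bigr] - \bigl[\nabla_{\theta x}^2 g(x,\theta^*(x))\,v(x,y) - \nabla_{\theta x}^2 g(\overline x,\theta^*(\overline x))\,v(\overline x,\overline y)\bigr].
\]
The first bracket is handled by Assumption \ref{assump:grad-xytheta-lip}-(1) together with Part (I), giving $(L_{xx}^\Phi+L_{x\theta}^\Phi\bL_\theta)\|x-\overline x\|+L_{xy}^\Phi\|y-\overline y\|$. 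For the second bracket I would add and subtract $\nabla_{\theta x}^2 g(x,\theta^*(x))\,v(\overline x,\overline y)$, producing two pieces: one is bounded by $\|\nabla_{\theta x}^2 g(x,\theta^*(x))\|\cdot\|v(x,y)-v(\overline x,\overline y)\|$, where Remark \ref{remak:hessiang-conditions} gives the operator-norm bound $C_{\theta x}^g$ and Lemma \ref{lem:lower-v} gives $\bC_{v1}\|x-\overline x\|+\bC_{v2}\|y-\overline y\|$; the other is bounded using the joint Lipschitz constant $L_{\theta x}^g$ of $\nabla_{\theta x}^2 g$ together with Part (I) to produce a factor $L_{\theta x}^g(1+\bL_\theta)\|x-\overline x\|$, multiplied by $\|v(\overline x,\overline y)\|$.

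The main obstacle is controlling $\|v(\overline x,\overline y)\|$, which does not appear as an assumption. I would establish it as a quick lemma: from the definition \eqref{eq:v} and strong convexity of $g$ giving $\|[\nabla^2_{\theta\theta}g(\overline x,\theta^*(\overline x))]^{-1}\|\le 1/\mu_g$, together with Assumption \ref{assump:grad-phi-bounded}, one gets $\|v(\overline x,\overline y)\|\le C_\theta^\Phi/\mu_g$. Collecting the four $\|x-\overline x\|$-coefficients yields exactly $\bL_{\cL_1}=L_{xx}^\Phi+L_{x\theta}^\Phi\bL_\theta+C_{\theta x}^g\bC_{v1}+\tfrac{C_\theta^\Phi}{\mu_g}L_{\theta x}^g(1+\bL_\theta)$, and the two $\|y-\overline y\|$-coefficients combine into $\bL_{\cL_2}=L_{xy}^\Phi+C_{\theta x}^g\bC_{v2}$, finishing the argument.
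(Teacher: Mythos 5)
Your proposal is correct and follows essentially the same route as the paper: strong convexity plus the first-order optimality conditions and Cauchy--Schwarz for Part (I), the decomposition of $\nabla_x\cL$ via an add-and-subtract of a cross term, the bound $\|v\|\le C_\theta^\Phi/\mu_g$, Lemma \ref{lem:lower-v}, and Assumption \ref{assump:g-conditions}-(4) for Part (II), and a direct application of Assumption \ref{assump:grad-xytheta-lip}-(3) with Part (I) for Part (III). The only cosmetic difference is which cross term you add and subtract in Part (II) (the paper uses $\nabla_{\theta x}^2 g(\overline{x},\theta^{*}(\overline{x}))v(x,y)$, you use the symmetric choice), which yields the same constants.
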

\begin{proof}
    See Appendix \ref{sec:proof-lemma-basic} for the proof.
\end{proof}

\section{Proposed Methods}\label{sec:main-algorithm}
In this section, we present our proposed algorithms based on an inexact bilevel regularized primal-dual approach for solving Problem \eqref{eq:main-prob}. 
We consider two main approaches to deal with the constraint $\cX$ in the minimization: the first one involves using a Linear Minimization Oracle (LMO), while in the second approach, we use a Projection Oracle (PO) to update the primal step of the upper-level objective function. To update the maximization variable, we consider a gradient ascent-based approach. Moreover, in the lower-level problem, we incorporate one step of gradient-type update to estimate the lower-level optimal solution. As a result, we propose both a one-sided projection-free and a fully projected gradient-based algorithms. Next, we discuss the detailed update of our proposed methods.

\textbf{Primal update:} One common approach to avoid using Hessian inverse in \eqref{eq:v} is to solve the corresponding parametric quadratic programming problem as follows
\begin{align*}
    v(x,y) = \argmin_v \frac{1}{2} v^\top \nabla_{\theta \theta}^2 g(x,\theta^*(x))v - \nabla_\theta \Phi(x, \theta^*(x),y)^\top v.
\end{align*}
The solution to the above quadratic programming problem can be approximated using one step of the gradient descent technique \cite{khanduri2021near,li2022fully}, as follows:
\begin{align*}
    v(x_k,y_k) = v(x_k,y_k) - \eta_k \big(\nabla_{\theta \theta}^2 g(x_k,\theta^*(x_k))v(x_k,y_k) - \nabla_\theta \Phi(x_k,\theta^*(x_k),y_k)\big).
\end{align*}
for some step-size $\eta_k \geq0$. In order to implement an iterative method for solving the SPB problem using first-order information at each iteration $k\geq0$, the exact evaluation of $\theta^*(x_k)$ is typically not feasible. One approach is to substitute $\theta^*(x_k)$ with its estimated solution $\theta_k$, which again can be obtained by applying one step of the gradient descent method with respect to the lower-level objective function to trace the optimal trajectory of the lower-level problem. Subsequently, $v(x_k,y_k)$ can be approximated by an iterate $w_{k+1}$ obtained by taking one step of gradient descent as
\begin{align*}
    w_{k+1} \leftarrow w_k - \eta_k \big(\nabla_{\theta \theta}^2 g(x_k, \theta_k) w_k - \nabla_\theta \Phi(x_k, \theta_k, y_k)\big).
\end{align*}
Next, the directions estimating \eqref{eq:grad-x-L} and \eqref{eq:grady-L} can be obtained as 
\begin{align*}
   & G^x_k =\nabla_x \Phi(x,\theta_k,y) -\nabla_{\theta x}^2 g(x,\theta_k) w_{k+1},\\
   & G^y_k = \nabla_y \Phi(x,\theta_k,y).
\end{align*}
Then, based on the oracle available for the constraint set $\mathcal{X}$, we either employ the Frank-Wolfe (FW)-type update \cite{frank1956algorithm} or projected gradient (PG) update based on the Krasnoselskii-Mann method \cite{shehu2018convergence}. In particular, we compute the next iteration $x_{k+1}$ using FW and PG, respectively, as follows:
\begin{align*}
    & FW: s_k \leftarrow \argmin_{x \in \mathcal{X}} \langle G_k^x,x \rangle, \quad x_{k+1} \leftarrow \gamma_k s_k + (1-\gamma_k)x_k,\\
    & PG: s_k \leftarrow \cP_\cX(x_k-\tau_k G_k^x), \quad x_{k+1} \leftarrow \gamma_k s_k + (1-\gamma_k)x_k.
\end{align*}
for some step-size $\gamma_k \in [0,1]$. Given the update $x_{k+1}$,  by implementing a gradient descent step with respect to the lower-level objective function $g(x_{k+1}, \cdot)$ using a step-size $\alpha>0$, we can generate an iterative sequence $\theta_{k+1}$ to approximate the lower-level optimal solution $\theta^*(x_{k+1})$ as
\begin{align*}
    \theta_{k+1} \leftarrow \theta_k - \alpha \nabla_\theta g(x_{k+1}, \theta_k).
\end{align*}
\textbf{Dual update:} In the context of nonconvex-concave (NC-C) SP problem, many approaches have been introduced recently to mitigate the challenge of nonsmoothness and nonconvexity of $f(x) \triangleq \max_{y \in \mathcal{Y}} \mathcal{L}(x,y)$. One approach is dual smoothing in which we add a regularization term $-\frac{\mu}{2}\|y-y_0\|^2$ for some $y_0 \in \mathcal{Y}$, to ensure that $f$ is smooth \cite{zhao2023primal}. Therefore,  we can write the regularized upper-level objective function for Problem \eqref{eq:main-prob} as follows
\begin{align}\label{eq:minmax-lmu}
    \min_{x \in \mathcal{X}} \max_{y \in \mathcal{Y}} \mathcal{L}_\mu (x,y) \triangleq \mathcal{L}(x,y) - \frac{\mu}{2} \|y-y_0\|^2.
\end{align}
Moreover, suppose $f_\mu: \mathcal{X} \rightarrow \mathbb{R}$ be a function such that $f_\mu (x) \triangleq \max_{y\in \mathcal{Y}} \mathcal{L}(x,y) - \frac{\mu}{2} \left\| y - y_0 \right\|^2$, we also define $y^*_\mu (x) \triangleq \argmin_{y \in \mathcal{Y}} \mathcal{L}(x,y) - \frac{\mu}{2} \left\| y - y_0 \right\|^2$. Here, by controlling $\mu$, we can find $y^*_{\mu}(x)$ close to $y^*(x)$ (i.e., $y^*_{\mu}(x) \approx y^*(x)$). Consequently, we can update the dual variable by performing a step of projected gradient ascent, as follows:
\begin{align*}
    y_{k+1} \leftarrow \mathcal{P}_{\mathcal{Y}} \Big(y_k + \sigma_k \big(G_k^y - \mu (y_k - y_0)\big)\Big).
\end{align*}
Our proposed inexact bilevel regularized primal-dual algorithms, one-sided projection-free (i-BRPD:OPF) and fully projected (i-BRPD:FP), are outlined in Algorithms \ref{i-BRPD:OPF} and \ref{i-BRPD:FP}, respectively.
\begin{algorithm}
\caption{Inexact Bilevel Regularized Primal-dual: One-sided Projection-free (i-BRPD:OPF) Method}\label{i-BRPD:OPF}
\begin{algorithmic}[1]
\STATE \textbf{Input}: $x_0 \in \mathcal{X}$, $y_0 \in \mathcal{Y}$, $\theta_0 \in \mathbb{R}^m$, $\alpha, \mu >0$, $\{\gamma_k, \sigma_k, \eta_k\}_k\subseteq\reals_+$
\STATE
\textbf{Initialization}: $\bw^0 \leftarrow \btheta^0$
\FOR{$k = 0,\dots,K-1$}
\STATE
$w_{k+1} \gets w_k-\eta_k (\grad_{\theta\theta}^2g(x_k,\theta_k)w_k-\grad_\theta\Phi(x_k,\theta_k,y_k))$ 
\STATE
$G_k^x \leftarrow \nabla_x \Phi(x_k,\theta_k,y_k) - \nabla_{\theta x}^2 g(x_k, \theta_k)w_{k+1}$
\STATE
$G_k^y \leftarrow \nabla_y \Phi(x_k, \theta_k, y_k)$
\STATE 
$s_k \leftarrow \argmin_{x \in \mathcal{X}} \langle G_k^x,x \rangle$
\STATE
$x_{k+1} \leftarrow \gamma_k s_k + (1-\gamma_k)x_k$
\STATE
$\theta_{k+1} \leftarrow \theta_k - \alpha \nabla_\theta g(x_{k+1}, \theta_k)$
\STATE
$y_{k+1} \leftarrow \mathcal{P}_{\mathcal{Y}} \Big(y_k + \sigma_k \big(G_k^y - \mu (y_k - y_0)\big)\Big)$
\ENDFOR
\end{algorithmic}
\end{algorithm}

It is noteworthy to mention that our proposed algorithms are designed as single-loop algorithms requiring at most two Hessian vector products.  

\begin{algorithm}
\caption{Inexact Bilevel Regularized Primal-dual: Fully Projected (i-BRPD:FP) Method}\label{i-BRPD:FP}
\begin{algorithmic}[1]
\STATE \textbf{Input}: $x_0 \in \mathcal{X}$, $y_0 \in \mathcal{Y}$, $\theta_0 \in \mathbb{R}^m$, $\alpha, \mu >0$, $\{\gamma_k, \sigma_k, \tau_k, \eta_k\}_k\subseteq\reals_+$
\STATE
\textbf{Initialization}: $\bw^0 \leftarrow \btheta^0$
\FOR{$k = 0,\dots,K-1$}
\STATE
$w_{k+1} \gets w_k-\eta_k (\grad_{\theta\theta}^2g(x_k,\theta_k)w_k-\grad_\theta\Phi(x_k,\theta_k,y_k))$ 
\STATE
$G_k^x \leftarrow \nabla_x \Phi(x_k,\theta_k,y_k) - \nabla_{\theta x}^2 g(x_k, \theta_k)w_{k+1}$
\STATE
$G_k^y \leftarrow \nabla_y \Phi(x_k, \theta_k, y_k)$
\STATE 
$s_k \leftarrow \cP_\cX(x_k-\tau_k G_k^x)$
\STATE
$x_{k+1} \leftarrow \gamma_k s_k + (1-\gamma_k)x_k$
\STATE
$\theta_{k+1} \leftarrow \theta_k - \alpha \nabla_\theta g(x_{k+1}, \theta_k)$
\STATE
$y_{k+1} \leftarrow \mathcal{P}_{\mathcal{Y}} \Big(y_k + \sigma_k \big(G_k^y - \mu (y_k - y_0)\big)\Big)$
\ENDFOR
\end{algorithmic}
\end{algorithm}

\section{Analysis of the Proposed Methods}\label{sec:analysis-methods}
In this section, we first delve into the analysis of the lower-level approximation in Subsection \ref{subsec:lower-level-approx}. Moving on to Subsection \ref{subsec:max-component}, we establish a bound on the error between $y_k$ and $y_\mu^*(x_k)$ to examine the trajectory of the optimal solution in the maximization component. Bringing together these subsections, we elucidate the analysis of gradient estimation in Subsection \ref{subsec:gradients-estimation}.
\subsection{Analysis of the Lower-level Approximation}\label{subsec:lower-level-approx}
The first step is to analyze the lower-level to see how $\theta_k$ is tracking $\theta^*(x_k)$, which is crucial for understanding the convergence behavior of proposed algorithms. This analysis serves to establish a bound on the error between $\theta_k$ and $\theta^*(x_k)$, which is essential for assessing the convergence rate of the algorithm.
\begin{lemma}\label{lem:lower_GD_C}
    Suppose Assumption \ref{assump:g-conditions} holds. Let $\{(x_k,\theta_k)\}_{k\geq0}$ be the sequence generated by Algorithm \ref{i-BRPD:OPF}, such that $\alpha = \frac{2}{\mu_g + L_g}$. Then, for any $k\geq0$
\begin{align*}  
    \left\| \theta_k - \theta^*(x_k) \right\| \leq \beta^k \left\| \theta_0 - \theta^*(x_0) \right\| + \bL_\theta \sum_{i=0}^{k-1} \beta^{k-i} \left\| x_i - x_{i+1} \right\|,
\end{align*}%
where $\beta\triangleq (L_g-\mu_g)/(L_g+\mu_g)$. Moreover, the sequence $\{\theta_k \}_{k\geq0}$ is a bounded sequence.
\end{lemma}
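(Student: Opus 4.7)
The plan is to use the standard one-step contraction property of gradient descent on the strongly convex smooth function $g(x_{k+1},\cdot)$, combined with the Lipschitz continuity of the optimal solution map $\theta^*(\cdot)$ from Lemma \ref{lem:theta-gradxL-lip}(I), and then unroll the resulting recursion.

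First I would observe that the update rule $\theta_{k+1} = \theta_k - \alpha\, \nabla_\theta g(x_{k+1},\theta_k)$ is exactly one step of gradient descent applied to the function $g(x_{k+1},\cdot)$, whose unique minimizer is $\theta^*(x_{k+1})$. Since Assumption \ref{assump:g-conditions} guarantees that $g(x_{k+1},\cdot)$ is $\mu_g$-strongly convex and $L_g$-smooth, the classical contraction result (e.g. Nesterov's textbook) with the optimal step size $\alpha = 2/(\mu_g+L_g)$ yields
\begin{equation*}
\|\theta_{k+1} - \theta^*(x_{k+1})\| \;\le\; \beta \,\|\theta_k - \theta^*(x_{k+1})\|, \qquad \beta = \frac{L_g-\mu_g}{L_g+\mu_g}.
\end{equation*}

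Next, I would decouple the right-hand side from $x_{k+1}$ using the triangle inequality together with Lemma \ref{lem:theta-gradxL-lip}(I):
\begin{equation*}
\|\theta_k - \theta^*(x_{k+1})\| \;\le\; \|\theta_k - \theta^*(x_k)\| + \|\theta^*(x_k) - \theta^*(x_{k+1})\| \;\le\; \|\theta_k - \theta^*(x_k)\| + \bL_\theta\,\|x_k - x_{k+1}\|.
\end{equation*}
Combining the two displays gives the one-step recursion
\begin{equation*}
\|\theta_{k+1} - \theta^*(x_{k+1})\| \;\le\; \beta\,\|\theta_k - \theta^*(x_k)\| + \beta\,\bL_\theta\,\|x_k - x_{k+1}\|.
\end{equation*}

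Finally, I would unroll this linear recursion in $k$, which produces the geometric term $\beta^k\|\theta_0 - \theta^*(x_0)\|$ and a sum of past primal displacements weighted by decreasing powers of $\beta$, exactly matching the claimed bound $\beta^k \|\theta_0 - \theta^*(x_0)\| + \bL_\theta \sum_{i=0}^{k-1} \beta^{k-i}\|x_i - x_{i+1}\|$. There is no real obstacle here; the only subtle point is being careful about which $x$-index the optimal solution is referenced to (one must compare $\theta_k$ to $\theta^*(x_{k+1})$ rather than $\theta^*(x_k)$ before invoking contraction), and then transferring back via Lipschitzness of $\theta^*$.
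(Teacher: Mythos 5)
Your proposal is correct and follows essentially the same route as the paper: the paper derives the one-step contraction $\|\theta_{k+1}-\theta^*(x_{k+1})\|\le\beta\|\theta_k-\theta^*(x_{k+1})\|$ from the standard coercivity inequality for $\mu_g$-strongly convex, $L_g$-smooth functions, then applies the triangle inequality with Lemma \ref{lem:theta-gradxL-lip}(I) and unrolls the recursion, exactly as you describe. The only cosmetic difference is that you invoke the classical contraction result as a black box while the paper spells out its derivation by expanding $\|\theta_{k+1}-\theta^*(x_{k+1})\|^2$.
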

\begin{proof}
We begin the proof by characterizing the one-step progress of the lower-level iterate sequence $\{\btheta_{k}\}_k$. Indeed, at iteration $k$, we aim to approximate $\btheta^*(\bx_{k+1})=\argmin_{\btheta} g(\bx_{k+1},\btheta)$. According to the update of $\btheta_{k+1}$, we observe that
\begin{align}\label{eq:lower_iter}
    \left\| \theta_{k+1} - \theta^*(x_{k+1}) \right\|^2 &= \left\| \theta_k - \theta^*(x_{k+1}) - \alpha\nabla_\theta g(x_{k+1},\theta_k) \right\|^2 \nonumber\\
    &= \left\| \theta_k - \theta^*(x_{k+1}) \right\|^2 - 2\alpha \langle \nabla_\theta g(x_{k+1},\theta_k), \theta_k - \theta^*(x_{k+1}) \rangle \nonumber\\
    & \quad + \alpha^2 \left\| \nabla_\theta g(x_{k+1},\theta_k) \right\|^2.
\end{align}
Moreover, from Assumption \ref{assump:g-conditions} we have that
\begin{align}\label{eq:g-sc-lip}
    \langle \nabla_\theta g(x_{k+1},\theta_k), \theta_k - \theta^*(x_{k+1}) \rangle \geq \frac{\mu_g L_g}{\mu_g + L_g} \left\| \theta_k - \theta^*(x_{k+1}) \right\|^2 + \frac{1}{\mu_g + L_g} \left\| \nabla_\theta g(x_{k+1},\theta_k) \right\|^2.
\end{align}
The inequality in \eqref{eq:g-sc-lip} together with \eqref{eq:lower_iter} imply that
\begin{align}\label{eq:lower_convrg}
    \left\| \theta_{k+1} - \theta^*(x_{k+1}) \right\|^2 &\leq \left\| \theta_k - \theta^*(x_{k+1}) \right\|^2 - \frac{2\alpha\mu_gL_g}{\mu_g+L_g} \left\| \theta_k - \theta^*(x_{k+1}) \right\|^2 \nonumber\\
    & \quad + (\alpha^2 - \frac{2\alpha}{\mu_g+L_g}) \left\| \nabla_\theta g(x_{k+1}, \theta_k \right\|^2.
\end{align}
Setting the step-size $\alpha = \frac{2}{\mu_g + L_g}$ in \eqref{eq:lower_convrg} leads to
\begin{align*}
    \left\| \theta_{k+1} - \theta^*(x_{k+1}) \right\|^2 \leq (\frac{\mu_g - L_g}{\mu_g + L_g})^2 \left\| \theta_k - \theta^*(x_{k+1}) \right\|^2.
\end{align*}

Next, recall that $\beta=(L_g-\mu_g)/(L_g+\mu_g)$. Using the triangle inequality and Part (I) of Lemma \ref{lem:theta-gradxL-lip} we conclude that
    \begin{align}\label{eq:one-step-triangle}
    \left\| \theta_{k+1} - \theta^*(x_{k+1}) \right\| &\leq \beta \left\| \theta_k - \theta^*(x_{k+1}) \right\| \nonumber\\
    & \leq \beta \big[\left\| \theta_k - \theta^*(x_k) \right\| + \left\| \theta^*(x_k) - \theta^*(x_{k+1}) \right\|\big] \nonumber\\
    &\leq \beta \big[\left\| \theta_k - \theta^*(x_k) \right\| + \bL_\theta\left\| x_k - x_{k+1} \right\|\big].
\end{align}
Moreover, from the update of $\bx_{k+1}$ in Algorithm \ref{i-BRPD:OPF} and boundedness of $\cX$ we have that $\norm{\bx_{k+1}-\bx_k}\leq \gamma_k D_\cX$. Therefore, using this inequality within \eqref{eq:one-step-triangle} leads to 
\begin{equation*}
\norm{\btheta_{k+1}-\btheta^*(\bx_{k+1})}\leq \beta\norm{\btheta_k-\btheta^*(\bx_k)}+\beta\gamma_k D_\cX \bL_\theta.
\end{equation*}
Therefore, the desired result can be deduced from the above inequality recursively.\\
Moreover, since $\mathcal{X}$ is compact, the sequence $\{x_k\}$ remains in a bounded set, implying that $\|x_i - x_{i+1}\|$ is uniformly bounded for all $i$. Given that $\theta^*(\cdot)$ is a continuous map on the compact set $\mathcal{X}$, it is also bounded. Consequently, since $0 < \beta < 1$, the right-hand side of the inequality remains bounded, ensuring that the sequence $\{\theta_k\}_{k\geq0}$ is a bounded sequence.
\end{proof}

In the next step of analyzing the lower-level approximation, we need to understand how well $w_{k+1}$ tracks $v(x_k,y_k)$. As explained in Section \ref{sec:main-algorithm}, $v(x_k,y_k)$ can be approximated by an iterate $w_{k+1}$ obtained by taking one step of gradient descent. 
\begin{lemma}\label{lem:lower-w-v}
    Let $\{(x_k,w_k,y_k)\}_{k\geq0}$ be the sequence generated by the Algorithm \ref{i-BRPD:OPF} or \ref{i-BRPD:FP} and suppose Assumptions \ref{assump:grad-xytheta-lip}, \ref{assump:g-conditions} hold and $\eta\leq 2/(L_g+\mu_g)$. Then, for any $k\geq 0$,
    \begin{align*}
        \|w_{k+1} - v(x_k,y_k) \| & \leq \rho^{k+1} \|w_0 - v(x_0,y_0)\| + \gamma\bC_{v1} \sum_{i=0}^{k-1} \rho^{k-i} \|s_i - x_i \| +  \bC_{v2} \sum_{i=0}^{k-1} \rho^{k-i} \|y_{i+1} - y_i \|\nonumber\\
        & \quad +\Tilde{C}\left\| \theta_0 - \theta^*(x_0) \right\| k\rho^k + \gamma\Tilde{C}\bL_\theta \sum_{i=0}^k \sum_{j=0}^{i-1} \rho^{k-i} \beta^{i-j} \left\| s_j-x_j\right\|,
    \end{align*}
where $\Tilde{C} = \eta (L_{\theta \theta}^g \frac{C_\theta^\Phi}{\mu_g}+L_{\theta \theta}^\Phi)$ and $\rho \triangleq (1-\eta \mu_g) $. Moreover, the sequence $\{w_{k} \}_{k\geq0}$ is a bounded sequence.
\end{lemma}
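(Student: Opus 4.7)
The plan is to derive a contraction-plus-error recursion in $k$ and then unroll it, piggy-backing on Lemmas~\ref{lem:lower-v} and \ref{lem:lower_GD_C} for the per-step perturbations.

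First I would obtain the one-step inequality $\|w_{k+1}-v(x_k,y_k)\|\le \rho\,\|w_k-v(x_k,y_k)\| + \tilde C\,\|\theta_k-\theta^*(x_k)\|$. To do so, rewrite the update as $w_{k+1}=(I-\eta\,\nabla_{\theta\theta}^2 g(x_k,\theta_k))w_k + \eta\,\nabla_\theta\Phi(x_k,\theta_k,y_k)$, subtract $v(x_k,y_k)$, and add/subtract $(I-\eta\,\nabla_{\theta\theta}^2 g(x_k,\theta_k))v(x_k,y_k)$ to decompose
\[
w_{k+1}-v(x_k,y_k)=(I-\eta\,\nabla_{\theta\theta}^2 g(x_k,\theta_k))(w_k-v(x_k,y_k))+\eta\bigl[\nabla_\theta\Phi(x_k,\theta_k,y_k)-\nabla_{\theta\theta}^2 g(x_k,\theta_k)v(x_k,y_k)\bigr].
\]
The spectral bound $\mu_g I\preceq \nabla_{\theta\theta}^2g\preceq L_g I$ together with $\eta\le 2/(L_g+\mu_g)$ gives the contraction factor $\rho=1-\eta\mu_g$. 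For the residual term, use the defining identity $\nabla_{\theta\theta}^2 g(x_k,\theta^*(x_k))v(x_k,y_k)=\nabla_\theta\Phi(x_k,\theta^*(x_k),y_k)$ to rewrite it as $[\nabla_{\theta\theta}^2g(x_k,\theta^*(x_k))-\nabla_{\theta\theta}^2g(x_k,\theta_k)]v(x_k,y_k) + [\nabla_\theta\Phi(x_k,\theta_k,y_k)-\nabla_\theta\Phi(x_k,\theta^*(x_k),y_k)]$, bound by Lipschitz continuity (Assumptions~\ref{assump:grad-xytheta-lip}, \ref{assump:g-conditions}) with the uniform bound $\|v(x_k,y_k)\|\le C_\theta^\Phi/\mu_g$ (immediate from Assumption~\ref{assump:grad-phi-bounded} and strong convexity of $g(x_k,\cdot)$), which produces the coefficient $\tilde C=\eta(L_{\theta\theta}^\Phi+L_{\theta\theta}^g C_\theta^\Phi/\mu_g)$.

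Next I would peel off the dependence of $v$ on the current iterate using Lemma~\ref{lem:lower-v}: for $k\ge 1$,
\[
\|w_k-v(x_k,y_k)\|\le \|w_k-v(x_{k-1},y_{k-1})\| + \bC_{v1}\gamma_{k-1}\|s_{k-1}-x_{k-1}\| + \bC_{v2}\|y_{k-1}-y_k\|,
\]
where I used $x_k-x_{k-1}=\gamma_{k-1}(s_{k-1}-x_{k-1})$ coming from the Frank--Wolfe/PG-style primal step. Substituting into the one-step inequality and setting $b_{k+1}\triangleq\|w_{k+1}-v(x_k,y_k)\|$ yields the linear recursion $b_{k+1}\le \rho\, b_k + \rho\bC_{v1}\gamma_{k-1}\|s_{k-1}-x_{k-1}\|+\rho\bC_{v2}\|y_{k-1}-y_k\|+\tilde C\|\theta_k-\theta^*(x_k)\|$, starting from $b_0=\|w_0-v(x_0,y_0)\|$ (with the corresponding simpler bound at $k=0$).

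Then I would unroll this recursion by repeated multiplication by $\rho$ and summation, producing the three Abel-type sums $\sum_{i}\rho^{k-i}\gamma_i\|s_i-x_i\|$, $\sum_{i}\rho^{k-i}\|y_{i+1}-y_i\|$, and $\sum_{i}\rho^{k-i}\|\theta_i-\theta^*(x_i)\|$, plus the initial term $\rho^{k+1}\|w_0-v(x_0,y_0)\|$. Finally, I would substitute the bound from Lemma~\ref{lem:lower_GD_C} for each $\|\theta_i-\theta^*(x_i)\|$, which contributes a term proportional to $\beta^i\|\theta_0-\theta^*(x_0)\|$ and a double sum $\bL_\theta\sum_{j<i}\beta^{i-j}\gamma_j\|s_j-x_j\|$. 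Bounding $\sum_{i=0}^{k}\rho^{k-i}\beta^i$ by (an order of) $k\rho^k$ (using $\beta\le\rho$, which is the regime the statement implicitly assumes) produces the initialization term $\tilde C\|\theta_0-\theta^*(x_0)\|k\rho^k$, while the double sum becomes the final term $\gamma\tilde C\bL_\theta\sum_{i=0}^{k}\sum_{j=0}^{i-1}\rho^{k-i}\beta^{i-j}\|s_j-x_j\|$. I expect the main obstacle to be purely bookkeeping: aligning indices across the three sources of error (the current-step $v$-perturbation, the $\theta$-tracking error from the previous lemma, and the initial transient) so that the telescoping and re-indexing yield exactly the form stated; the algebra of the one-step recursion itself is routine once the ``add--subtract $(I-\eta\nabla_{\theta\theta}^2 g)v(x_k,y_k)$'' trick is identified.
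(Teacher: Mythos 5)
Your proposal is correct and follows essentially the same route as the paper's proof: the same add-and-subtract of $(I-\eta\nabla_{\theta\theta}^2 g(x_k,\theta_k))v(x_k,y_k)$ to get the $\rho$-contraction plus a $\tilde C\|\theta_k-\theta^*(x_k)\|$ residual, the same shift via Lemma~\ref{lem:lower-v} to turn $\|w_k-v(x_k,y_k)\|$ into $\|w_k-v(x_{k-1},y_{k-1})\|$ plus increments, and the same unrolling with Lemma~\ref{lem:lower_GD_C} and the bound $\sum_{i\le k}\rho^{k-i}\beta^i\le k\rho^k$. The only cosmetic difference is that $\beta\le\rho$ need not be assumed implicitly; it follows directly from $\eta\le 2/(L_g+\mu_g)$, since then $\rho=1-\eta\mu_g\ge (L_g-\mu_g)/(L_g+\mu_g)=\beta$.
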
 
\begin{proof}
    By verifying that $v(x_k,y_k) = v(x_k,y_k) - \eta \big(\nabla_{\theta \theta}^2 g(x_k,\theta^*(x_k))v(x_k,y_k) - \nabla_\theta \Phi(x_k,\theta^*(x_k),y_k)\big)$, and by defining $w_{k+1} = w_k - \eta \big(\nabla_{\theta \theta}^2 g(x_k, \theta_k) w_k - \nabla_\theta \Phi(x_k, \theta_k, y_k)\big)$ we can write
    \begin{align*}
        \|w_{k+1} - v(x_k,y_k) \| &= \| \Big(w_k - \eta \big(\nabla_{\theta \theta}^2 g(x_k, \theta_k) w_k - \nabla_\theta \Phi(x_k, \theta_k, y_k)\big) \Big) \nonumber\\
        & \quad - \Big(v(x_k,y_k) - \eta \big(\nabla_{\theta \theta}^2 g(x_k,\theta^*(x_k))v(x_k,y_k) - \nabla_\theta \Phi(x_k,\theta^*(x_k),y_k)\big) \Big) \| \nonumber\\
        & = \| \Big(I - \eta \nabla_{\theta \theta}^2 g(x_k,\theta_k) \Big)(w_k - v(x_k,y_k)) - \eta \Big(\nabla_{\theta \theta}^2g(x_k,\theta_k) \nonumber\\
        & \quad - \nabla_{\theta \theta}^2g(x_k,\theta^*(x_k))\Big)v(x_k,y_k) + \eta \Big(\nabla_\theta \Phi(x_k,\theta^*(x_k),y_k)-\nabla_\theta\Phi(x_k,\theta_k,y_k)\Big) \|,
    \end{align*}
    where the last equality is obtained by adding and subtracting the term $(I - \eta \nabla_{\theta \theta}^2 g(x_k,\theta_k))v(x_k,y_k)$. Next, using Assumptions \ref{assump:grad-xytheta-lip} and \ref{assump:g-conditions} along with the application of the triangle inequality we obtain
    \begin{align*}
        \|w_{k+1} - v(x_k,y_k) \| &\leq (1-\eta \mu_g) \| w_k - v(x_k,y_k) \| + \eta L_{\theta \theta}^g \|\theta_k  - \theta^*(x_k)\| \|v(x_k,y_k)\| \nonumber\\
        & \quad + \eta L_{\theta \theta}^\Phi \|\theta_k - \theta^*(x_k)\|.
    \end{align*}
    Note that $ \|v(x,y)\| = \| [\nabla_{\theta\theta}^2 g(x,\theta^{*}(x))]^{-1} \nabla_\theta\Phi(x,\theta^*(x),y)\| \leq \frac{C_\theta^\Phi}{\mu_g}$. Now, by adding and subtracting $v(x_{k-1},y_{k-1})$ to the term $\|w_k - v(x_k,y_k) \|$ followed by the triangle inequality application we can conclude that
    \begin{align*}
        \|w_{k+1} - v(x_k,y_k) \| &\leq (1-\eta \mu_g) \|w_k - v(x_{k-1},y_{k-1}) \| + (1-\eta \mu_g) \|v(x_{k-1},y_{k-1}) - v(x_k,y_k)\| \nonumber\\
        & \quad + \eta(L_{\theta \theta}^g \frac{C_\theta^\Phi}{\mu_g}+L_{\theta \theta}^\Phi) \|\theta_k - \theta^*(x_k)\|.
    \end{align*}
    Therefore, using the result of Lemma \ref{lem:lower-v} and \ref{lem:lower_GD_C} we can show
    \begin{align*}
        \|w_{k+1} - v(x_k,y_k) \| &\leq \rho \|w_k - v(x_{k-1},y_{k-1}) \| + \rho (\bC_{v1}\left\| x_i -x_{i-1} \right\| + \bC_{v2}\left\| y_i - y_{i-1} \right\|) \nonumber\\
        & \quad + \eta (L_{\theta \theta}^g \frac{C_\theta^\Phi}{\mu_g}+L_{\theta \theta}^\Phi)(\beta^i \left\| \theta_0 - \theta^*(x_0) \right\| + \bL_\theta \sum_{j=0}^{i-1} \beta^{i-j} \left\| x_j - x_{j+1} \right\|).
    \end{align*}
    Consequently, by continuing the above inequality  recursively from $0$ to $k$, we obtain
    \begin{align*}
        \|w_{k+1} - v(x_k,y_k) \| &\leq \rho^{k+1} \|w_0 - v(x_0,y_0)\| + \sum_{i=0}^k \rho^{k-i+1}(\bC_{v1}\left\| x_i -x_{i-1} \right\| + \bC_{v2}\left\| y_i - y_{i-1} \right\|) \nonumber\\
        & \quad + \sum_{i=0}^k \rho^{k-i} \eta (L_{\theta \theta}^g \frac{C_\theta^\Phi}{\mu_g}+L_{\theta \theta}^\Phi)(\beta^i \left\| \theta_0 - \theta^*(x_0) \right\| + \bL_\theta \sum_{j=0}^{i-1} \beta^{i-j} \left\| x_j - x_{j+1} \right\|) \nonumber\\
        & \leq \rho^{k+1} \|w_0 - v(x_0,y_0)\| + \bC_{v1} \sum_{i=0}^{k-1} \rho^{k-i} \|x_{i+1} - x_i \| +  \bC_{v2} \sum_{i=0}^{k-1} \rho^{k-i} \|y_{i+1} - y_i \|\nonumber\\
        & \quad +\Tilde{C}\left\| \theta_0 - \theta^*(x_0) \right\| \sum_{i=0}^k \rho^{k-i} \beta^i + \Tilde{C}\bL_\theta \sum_{i=0}^k \rho^{k-i} \sum_{j=0}^{i-1}\beta^{i-j} \left\| x_j - x_{j+1} \right\| \nonumber\\
        & \leq \rho^{k+1} \|w_0 - v(x_0,y_0)\| + \bC_{v1} \sum_{i=0}^{k-1} \rho^{k-i} \|x_{i+1} - x_i \| +  \bC_{v2} \sum_{i=0}^{k-1} \rho^{k-i} \|y_{i+1} - y_i \|\nonumber\\
        & \quad +\Tilde{C}\left\| \theta_0 - \theta^*(x_0) \right\| k\rho^k + \Tilde{C}\bL_\theta \sum_{i=0}^k \sum_{j=0}^{i-1} \rho^{k-i} \beta^{i-j} \left\| x_j - x_{j+1} \right\|,
    \end{align*}
    where $\Tilde{C} = \eta (L_{\theta \theta}^g \frac{C_\theta^\Phi}{\mu_g}+L_{\theta \theta}^\Phi)$ and in the last inequality we used the fact that $\beta\leq \rho$. 
    The result follows by rearranging the terms on the RHS and $x_{k+1}-x_k=\gamma(s_k-x_k)$ for any $k\geq 0$.\\ 
    Moreover, since $\mathcal{X}$ and $\mathcal{Y}$ are compact, and $v(\cdot,\cdot)$ is a continuous map on these sets, it follows that $v$ is bounded. Additionally, all terms on the right-hand side are bounded implying that the sequence $\{w_k\}_{k\geq0}$ is also bounded.
\end{proof}

\subsection{Analysis of Tracking the Optimal Solution Trajectory of the Maximization Component}\label{subsec:max-component}
In this section, we examine the properties of the regularized optimal solution in the maximization component of \eqref{eq:minmax-lmu} as discussed in Section \ref{sec:main-algorithm}. In particular, we show that the regularized optimal solution $y^*_{\mu}$ is Lipschitz continuous, and the function $f_{\mu}$ has a
Lipschitz continuous gradient, in Lemmas  \ref{lem: ymu-lip} and \ref{lem:grad-fmu-lip}, respectively. Although similar results have been shown in the literature \cite{sinha2018certifying,zhao2023primal}, here, we provide the proof for completeness. 
\begin{lemma}\label{lem: ymu-lip}
    The solution map $y^*_\mu : \mathcal{X} \rightarrow \mathcal{Y}$ is Lipschitz continuous. Suppose Assumptions \ref{assump:grad-xytheta-lip}, \ref{assump:g-conditions} hold and $\theta^*(x)$ is Lipschitz continuous map with constant $\bL_\theta$. In particular, for any $x,\overline{x} \in \mathcal{X}$
    \begin{align*}
        \left\| y^*_\mu(\overline{x})-y^*_\mu(x) \right\| \leq \frac{L_{y\mu}}{\mu}\left\| x - \overline{x} \right\|,
    \end{align*}
    where $L_{y\mu} = L_{yx}^\Phi + \bL_\theta L_{y\theta}^\Phi$.
\end{lemma}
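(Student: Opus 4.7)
The plan is to exploit the $\mu$-strong concavity of $\mathcal{L}_\mu(x,\cdot)$ and combine the first-order optimality conditions at $x$ and $\bar{x}$, then use the joint Lipschitz continuity of $\nabla_y\Phi$ from Assumption \ref{assump:grad-xytheta-lip}(3) together with the Lipschitz continuity of $\theta^*$ from Lemma \ref{lem:theta-gradxL-lip}(I).

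First, I would write the variational inequality characterizing $y^*_\mu(x)$. Since $\mathcal{L}_\mu(x,\cdot)$ is $\mu$-strongly concave, $y^*_\mu(x)$ is unique and satisfies
\[
\langle \nabla_y \Phi(x,\theta^*(x),y^*_\mu(x)) - \mu(y^*_\mu(x)-y_0),\; y - y^*_\mu(x)\rangle \leq 0 \quad \forall y\in\cY.
\]
Plugging $y = y^*_\mu(\bar{x})$ into this inequality and plugging $y = y^*_\mu(x)$ into the analogous inequality at $\bar{x}$, then adding the two and canceling the $y_0$ terms, I obtain
\[
\langle \nabla_y\Phi(\bar{x},\theta^*(\bar{x}),y^*_\mu(\bar{x})) - \nabla_y\Phi(x,\theta^*(x),y^*_\mu(x)),\; y^*_\mu(\bar{x}) - y^*_\mu(x)\rangle \geq \mu\,\|y^*_\mu(\bar{x})-y^*_\mu(x)\|^2.
\]

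Next, I would split the left-hand side by inserting $\pm\nabla_y\Phi(\bar{x},\theta^*(\bar{x}),y^*_\mu(x))$. The resulting ``$y$-only'' increment $\langle \nabla_y\Phi(\bar{x},\theta^*(\bar{x}),y^*_\mu(\bar{x})) - \nabla_y\Phi(\bar{x},\theta^*(\bar{x}),y^*_\mu(x)),\; y^*_\mu(\bar{x}) - y^*_\mu(x)\rangle$ is nonpositive by concavity of $\Phi$ in $y$ (monotonicity of $-\nabla_y\Phi$ in $y$), so it can be dropped while preserving the inequality. This leaves
\[
\langle \nabla_y\Phi(\bar{x},\theta^*(\bar{x}),y^*_\mu(x)) - \nabla_y\Phi(x,\theta^*(x),y^*_\mu(x)),\; y^*_\mu(\bar{x}) - y^*_\mu(x)\rangle \geq \mu\,\|y^*_\mu(\bar{x})-y^*_\mu(x)\|^2.
\]

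Finally, I would apply Cauchy--Schwarz to the left-hand side and bound the gradient difference using Assumption \ref{assump:grad-xytheta-lip}(3) together with Lemma \ref{lem:theta-gradxL-lip}(I), yielding
\[
\|\nabla_y\Phi(\bar{x},\theta^*(\bar{x}),y^*_\mu(x)) - \nabla_y\Phi(x,\theta^*(x),y^*_\mu(x))\| \leq (L_{yx}^\Phi + L_{y\theta}^\Phi \bL_\theta)\|x-\bar{x}\| = L_{y\mu}\|x-\bar{x}\|.
\]
Dividing by $\mu\,\|y^*_\mu(\bar{x}) - y^*_\mu(x)\|$ (handling the trivial case where this vanishes separately) gives the claim. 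There is no real obstacle here; the only subtlety worth stating clearly is the concavity-in-$y$ step that lets me drop the $y$-only inner product with the correct sign, since otherwise one would be stuck with a Lipschitz bound in $y$ on the right-hand side that cannot be absorbed cleanly.
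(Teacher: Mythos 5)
Your proof is correct and follows essentially the same route as the paper: sum the two optimality variational inequalities at $x$ and $\bar x$, extract the $\mu\|y^*_\mu(\bar x)-y^*_\mu(x)\|^2$ term from the strong concavity of $\mathcal{L}_\mu(x,\cdot)$ (which you realize explicitly via the $-\mu(y-y_0)$ regularizer plus monotonicity of $-\nabla_y\Phi$ in $y$, where the paper invokes strong concavity of $\mathcal{L}_\mu$ as a single inequality), and bound the remaining cross term with Cauchy--Schwarz, Assumption \ref{assump:grad-xytheta-lip}-(3), and Lemma \ref{lem:theta-gradxL-lip}-(I). The only cosmetic difference is that the paper evaluates the residual gradient difference at $y^*_\mu(\bar x)$ rather than $y^*_\mu(x)$, which is immaterial.
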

\begin{proof}
    Since $\mathcal{L}_\mu (x,\cdot)$ is strongly concave for any $x\in \mathcal{X}$, we have that
    \begin{align} \label{eq: Lmu-strng-conc}
    (y^*_\mu(\overline{x})-y^*_\mu(x))^\top\big[\nabla_y\mathcal{L}_\mu(x,y^*_\mu(\overline{x})) - \nabla_y\mathcal{L}_\mu(x,y^*_\mu(x))\big] + \mu \left\| y^*_\mu(\overline{x})-y^*_\mu(x) \right\|^2 \leq 0.
\end{align}
Moreover, the optimality of $y^*_\mu (\overline{x})$ and $y^*_\mu (x)$ given that $\mathcal{L}_\mu (x,y)= \mathcal{L}(x,y) - \frac{\mu}{2} \left\| y - y_0 \right\|^2$ implies that for any $y\in \mathcal{Y}$,
\begin{align}\label{eq: opt-ymu1}
    (y-y^*_\mu(\overline{x}))^\top \nabla_y \mathcal{L}_\mu(\overline{x},y^*_\mu(\overline{x}))\leq0,
\end{align}
\begin{align}\label{eq: opt-ymu2}
    (y-y^*_\mu(x))^\top \nabla_y \mathcal{L}_\mu(x,y^*_\mu(x))\leq0.
\end{align}
Let $y=y^*_\mu(x)$ in \eqref{eq: opt-ymu1} and $y=y^*_\mu(\overline{x})$ in \eqref{eq: opt-ymu2} and summing up two inequalities, we obtain
\begin{align} \label{eq: ym-sum}
    (y^*_\mu(x)-y^*_\mu(\overline{x}))^\top\Big(\nabla_y\mathcal{L}_\mu(\overline{x},y^*_\mu(\overline{x})) - \nabla_y\mathcal{L}_\mu(x,y^*_\mu(x))\Big)\leq0.
\end{align}
By combining \eqref{eq: ym-sum} and \eqref{eq: Lmu-strng-conc} we have
\begin{align*}
    \mu \left\| y^*_\mu(\overline{x})-y^*_\mu(x) \right\|^2 \leq (y^*_\mu(\overline{x})-y^*_\mu(x))^\top\Big(\nabla_y\mathcal{L}_\mu(\overline{x},y^*_\mu(\overline{x})) - \nabla_y\mathcal{L}_\mu(x,y^*_\mu(\overline{x}))\Big).
\end{align*}
We know from \eqref{eq:grady-L} that $\nabla_y\mathcal L(x,y) = \nabla_y \Phi(x,\theta^*(x),y)$ and by defining $\Phi_\mu \triangleq \Phi - \frac{\mu}{2}\left\| y - y_0 \right\|^2$ we have
\begin{align*} 
    \mu \left\| y^*_\mu(\overline{x})-y^*_\mu(x) \right\|^2 \leq (y^*_\mu(\overline{x})-y^*_\mu(x))^\top\Big(\nabla_y\Phi_\mu(\overline{x},\theta^*(\overline{x}),y^*_\mu(\overline{x})) - \nabla_y\Phi_\mu(x,\theta^*(x),y^*_\mu(\overline{x}))\Big).
\end{align*}
Using Assumption \ref{assump:grad-xytheta-lip}-(3) we can get
\begin{align*} 
    \mu \left\| y^*_\mu(\overline{x})-y^*_\mu(x) \right\|^2 \leq \left\| y^*_\mu(\overline{x})-y^*_\mu(x) \right\|(L_{yx}^\Phi \left\| x - \overline{x} \right\| + L_{y\theta}^\Phi \left\| \theta^*(x) - \theta^*(\overline{x})\right\|),
\end{align*}
\begin{align*}
    \left\| y^*_\mu(\overline{x})-y^*_\mu(x) \right\| \leq \frac{L_{yx}^\Phi}{\mu}\left\| x - \overline{x} \right\| + \frac{L_{y\theta}^\Phi}{\mu}\left\| \theta^*(x) - \theta^*(\overline{x})\right\|.
\end{align*}
Considering Lemma \ref{lem:theta-gradxL-lip}-(I) we can show that
\begin{align*}
    \left\| y^*_\mu(\overline{x})-y^*_\mu(x) \right\| \leq (\frac{L_{yx}^\Phi+\bL_\theta L_{y\theta}^\Phi}{\mu})\left\| x - \overline{x} \right\|.
\end{align*}
\end{proof}
\begin{lemma}\label{lem:grad-fmu-lip}
    Suppose Assumptions \ref{assump:grad-xytheta-lip}, \ref{assump:g-conditions} hold. The function $f_\mu(\cdot)$ is differentiable on an open set containing $\mathcal{X}$ and $\nabla f_\mu(x) = \nabla_x\mathcal{L}(x,y^*_\mu(x))$ where $y^*_\mu(x) \triangleq \argmin_{y\in\mathcal{Y}}\mathcal{L}_\mu(x,y)$. Moreover, $f_\mu$ has a Lipschitz continuous gradient with constant $\bL_{\mathcal L_1} + \frac{\bL_{\mathcal L_2} L_{y\mu}}{\mu}$.
\end{lemma}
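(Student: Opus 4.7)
The plan is to establish the differentiability claim via a Danskin-type argument and then derive the Lipschitz constant by straightforward composition using the two Lipschitz results already proved in this section.

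For the first part, I would observe that $\mathcal{L}_\mu(x,\cdot) = \mathcal{L}(x,\cdot) - \frac{\mu}{2}\|\cdot - y_0\|^2$ is continuous in $(x,y)$, concave in $y$ (by Assumption \ref{assump:grad-xytheta-lip}), and in fact $\mu$-strongly concave in $y$ due to the added regularizer. Therefore the maximizer $y_\mu^*(x) = \argmax_{y\in\mathcal{Y}} \mathcal{L}_\mu(x,y)$ is a singleton for each $x$ in an open neighborhood of $\cX$. Under this uniqueness condition, together with continuity of $\nabla_x \mathcal{L}_\mu(\cdot,\cdot) = \nabla_x \mathcal{L}(\cdot,\cdot)$ (which follows from Lemma \ref{lem:theta-gradxL-lip}-(II), noting that the regularizer does not depend on $x$), Danskin's theorem applies and yields $\nabla f_\mu(x) = \nabla_x \mathcal{L}_\mu(x, y_\mu^*(x)) = \nabla_x \mathcal{L}(x, y_\mu^*(x))$.

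For the Lipschitz constant of $\nabla f_\mu$, I would fix arbitrary $x,\overline{x}\in\cX$ and write
\begin{align*}
\|\nabla f_\mu(x) - \nabla f_\mu(\overline{x})\|
&= \|\nabla_x\mathcal{L}(x, y_\mu^*(x)) - \nabla_x\mathcal{L}(\overline{x}, y_\mu^*(\overline{x}))\| \\
&\leq \bL_{\mathcal{L}_1}\|x-\overline{x}\| + \bL_{\mathcal{L}_2}\|y_\mu^*(x) - y_\mu^*(\overline{x})\|,
\end{align*}
where the inequality uses Lemma \ref{lem:theta-gradxL-lip}-(II). Then applying Lemma \ref{lem: ymu-lip} to bound $\|y_\mu^*(x) - y_\mu^*(\overline{x})\| \leq \frac{L_{y\mu}}{\mu}\|x-\overline{x}\|$ gives the announced constant $\bL_{\mathcal{L}_1} + \frac{\bL_{\mathcal{L}_2} L_{y\mu}}{\mu}$.

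There is no substantial obstacle here; the main thing to be careful about is verifying that the hypotheses for Danskin's theorem are satisfied (uniqueness of the maximizer, continuity of the cross partial gradient in both arguments, and compactness of $\mathcal{Y}$ from Assumption \ref{assump:Dx-compact}). Everything else is a direct composition of the earlier lemmas, so the proof will be short.
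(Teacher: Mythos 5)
Your proposal is correct and follows essentially the same route as the paper: Danskin's theorem (justified by the $\mu$-strong concavity of $\mathcal{L}_\mu(x,\cdot)$ and compactness of $\mathcal{Y}$) gives $\nabla f_\mu(x)=\nabla_x\mathcal{L}(x,y^*_\mu(x))$, and then Lemma \ref{lem:theta-gradxL-lip}-(II) composed with Lemma \ref{lem: ymu-lip} yields the constant $\bL_{\mathcal L_1}+\frac{\bL_{\mathcal L_2}L_{y\mu}}{\mu}$. Your explicit verification of the Danskin hypotheses is slightly more careful than the paper's one-line invocation, but the argument is identical in substance.
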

\begin{proof}
    From Danskins's theorem \cite{bernhard1995theorem} one can obtain $f_\mu(\cdot)$ is differentiable and $\nabla f_\mu(x) = \nabla_x\mathcal{L}(x,y^*_\mu(x))$. Therefore, we have for any $x,x'\in\cX$, 
    \begin{align*}
        \left\| \nabla f_\mu(x) - \nabla f_\mu(x') \right\| = \left\| \nabla_x \mathcal{L}(x,y^*_\mu(x)) - \nabla_x\mathcal{L}(x',y^*_\mu(x')) \right\|.
    \end{align*}
    Using Lemma \ref{lem:theta-gradxL-lip}-(II) we can show
    \begin{align*}
        \left\| \nabla f_\mu(x) - \nabla f_\mu(x') \right\| \leq \bL_{\mathcal L_1} \left\| x - x' \right\| + \bL_{\mathcal L_2}\left\| y^*_\mu(x) - y^*_\mu(x') \right\|.
    \end{align*}
    Considering the result in Lemma \ref{lem: ymu-lip} we have
    \begin{align*}
        \left\| \nabla f_\mu(x) - \nabla f_\mu(x') \right\| \leq \bL_{\mathcal L_1} \left\| x - x' \right\| + \bL_{\mathcal L_2}\frac{L_{y\mu}}{\mu}\left\| x - x' \right\|,
    \end{align*}
    \begin{align*}
        \left\| \nabla f_\mu(x) - \nabla f_\mu(x') \right\| \leq (\bL_{\mathcal L_1} + \frac{\bL_{\mathcal L_2} L_{y\mu}}{\mu})\left\| x - x' \right\|.
    \end{align*}
\end{proof}
Next, in the subsequent lemmas, we establish the bound on the distance between the iterative solution $y_k$ and the optimal solution $y^*_\mu(x_k)$. Moreover, by deriving a bound on the consecutive iterates of $y_k$, we demonstrate that the error between them can be controlled using parameter $\gamma$.   
\begin{lemma} \label{lem: lower-y-ymu}
    Suppose Assumptions \ref{assump:grad-xytheta-lip}, \ref{assump:g-conditions} hold. Let $\{(x_k,y_k)\}_{k\geq0}$ be the sequence generated by the Algorithm \ref{i-BRPD:OPF} or \ref{i-BRPD:FP} with $\sigma_k = \sigma\leq \frac{2}{L_{yy}^\Phi+2\mu}$. Then for any $k\geq0$,
    \begin{align*}
    \left\| y_k - y^*_\mu(x_k) \right\|&\leq \rho_D\|y_{k-1}-y^*_\mu(x_k)\|+\sigma L^\Phi_{y\theta}\Big(\beta^k \left\| \theta_0 - \theta^*(x_0) \right\| + \gamma \bL_\theta \sum_{i=0}^{k-1} \beta^{k-i} \left\| s_i - x_i \right\| \Big),
\end{align*}
where $\rho_D\triangleq L_{yy}^\Phi/(L_{yy}^\Phi+2\mu)$. Moreover, when $L_{yy}^\Phi=0$, selecting $\sigma_k=\sigma=\frac{1}{\mu}$ and $y_0=\mathbf{0}$ we have that for any $k\geq 0$, 
\begin{align*}
    \norm{y_k-y^*_\mu(x_k)}\leq \frac{L^\Phi_{y\theta}}{\mu}\Big(\beta^k \left\| \theta_0 - \theta^*(x_0) \right\| + \gamma \bL_\theta \sum_{i=0}^{k-1} \beta^{k-i} \left\| s_i - x_i \right\| \Big).
\end{align*} 
\end{lemma}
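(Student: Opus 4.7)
The argument rests on the nonexpansiveness of the projection $\cP_\cY$ combined with the standard one-step projected-gradient contraction on the regularized, strongly-concave dual map $\cL_\mu(x_k,\cdot)$. The only source of inexactness in the dual direction is that the algorithm uses $\theta_k$ in place of $\theta^*(x_k)$ inside $G_k^y=\grad_y\Phi(x_k,\theta_k,\cdot)$, so the residual error should be controllable purely through $\|\theta_k-\theta^*(x_k)\|$, for which Lemma \ref{lem:lower_GD_C} already provides a recursive bound.

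First, I would invoke the first-order optimality of $y^*_\mu(x_k)$ on $\cY$, which yields the fixed-point identity $y^*_\mu(x_k)=\cP_\cY\big(y^*_\mu(x_k)+\sigma\,\grad_y\cL_\mu(x_k,y^*_\mu(x_k))\big)$. Pairing this with the algorithm's dual update and using the nonexpansiveness of $\cP_\cY$ produces the baseline inequality
\[
\|y_{k}-y^*_\mu(x_k)\|\leq \big\|(y_{k-1}-y^*_\mu(x_k))+\sigma\big(G^y-\mu(y_{k-1}-y_0)-\grad_y\cL_\mu(x_k,y^*_\mu(x_k))\big)\big\|,
\]
where $G^y$ is the inexact gradient used in the step that generated $y_k$. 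Inserting $\pm\,\grad_y\cL_\mu(x_k,y_{k-1})$ inside the norm splits the right-hand side into a contractive piece and a residual that reduces to $\grad_y\Phi(x_k,\theta_k,\cdot)-\grad_y\Phi(x_k,\theta^*(x_k),\cdot)$; by Assumption \ref{assump:grad-xytheta-lip}-(3) the residual is bounded by $L_{y\theta}^\Phi\|\theta_k-\theta^*(x_k)\|$.

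Second, for the contractive piece I would appeal to the classical projected-gradient-ascent bound for smooth strongly-concave objectives. Because the regularizer makes $\cL_\mu(x_k,\cdot)$ $\mu$-strongly concave with $(L_{yy}^\Phi+\mu)$-Lipschitz gradient, the choice $\sigma\leq 2/(L_{yy}^\Phi+2\mu)$ yields
\[
\big\|(y-y^*_\mu(x_k))+\sigma\big(\grad_y\cL_\mu(x_k,y)-\grad_y\cL_\mu(x_k,y^*_\mu(x_k))\big)\big\|\leq \rho_D\,\|y-y^*_\mu(x_k)\|,
\]
with sharp factor $\rho_D=L_{yy}^\Phi/(L_{yy}^\Phi+2\mu)$, which is the usual estimate obtained by expanding the square and applying co-coercivity of $-\cL_\mu(x_k,\cdot)$.

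Third, I would substitute the bound on $\|\theta_k-\theta^*(x_k)\|$ from Lemma \ref{lem:lower_GD_C} and rewrite each $\|x_i-x_{i+1}\|$ as $\gamma\|s_i-x_i\|$ using the primal update rule in Algorithm \ref{i-BRPD:OPF} or \ref{i-BRPD:FP}. Combining the two bounds via the triangle inequality yields the claimed recursive estimate. For the linear-in-$y$ case $L_{yy}^\Phi=0$, the map $\cL_\mu(x_k,\cdot)$ reduces to a linear term plus $-\tfrac{\mu}{2}\|\cdot-y_0\|^2$, so $\sigma=1/\mu$ drives $\rho_D=0$, and initializing $y_0=\mathbf{0}$ eliminates the base-case contribution, giving the closed-form non-recursive bound at once. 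The main obstacle is getting the contraction factor exactly right: one must recognize that after adding the regularizer the Lipschitz constant of $\grad_y\cL_\mu(x_k,\cdot)$ is $L_{yy}^\Phi+\mu$ rather than $L_{yy}^\Phi$, pair this with the $\mu$-strong concavity the same regularizer induces, and verify that the prescribed step-size is precisely the one producing $L_{yy}^\Phi/(L_{yy}^\Phi+2\mu)$; the rest of the proof is a chain of triangle inequalities and appeals to earlier lemmas.
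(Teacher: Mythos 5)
Your proposal is correct and follows essentially the same route as the paper's proof: the fixed-point characterization of $y^*_\mu(x_k)$, nonexpansiveness of $\cP_\cY$, insertion of $\pm\nabla_y\cL_\mu(x_k,y_{k-1})$ to isolate a contractive gradient-step map (the paper phrases this via $g_k(y)=\tfrac12\|y\|^2+\sigma\cL_\mu(x_k,y)$ and the eigenvalue bound $\rho_D=\max\{|1-\sigma(L^\Phi_{yy}+\mu)|,|1-\sigma\mu|\}$, which is the same classical estimate you invoke) plus a residual bounded by $L^\Phi_{y\theta}\|\theta_k-\theta^*(x_k)\|$, followed by Lemma \ref{lem:lower_GD_C} and $\|x_{i+1}-x_i\|=\gamma\|s_i-x_i\|$. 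No substantive differences.
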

\begin{proof}
    From the optimality condition we have that $y^*_\mu(x) = \mathcal{P}_{\mathcal{Y}}\Big(y^*_\mu(x) + \sigma \nabla_y \mathcal{L}_\mu(x,y^*_\mu(x))\Big)$ for any $x\in \mathcal{X}$. Moreover, from the update of $y_k$ as $y_k = \mathcal P_\mathcal{Y}(y_{k-1} + \sigma G^y_{k-1})$ where $G^y_k\triangleq \nabla_y\Phi(x_k,\theta_k,y_k)-\mu(y_k - y_0)$ for any $k\geq 0$, and the non-expansivity of the projection operator, by adding and subtracting $\nabla_y \mathcal{L}_\mu(x_k,y_{k-1})$, we have
    \begin{align*}
    \left\| y_k - y^*_\mu(x_k) \right\| &\leq \left\| y_{k-1} + \sigma \Big(G^y_{k-1} + \nabla_y \mathcal{L}_\mu(x_k,y_{k-1}) - \nabla_y \mathcal{L}_\mu(x_k,y_{k-1})\Big) \right. \nonumber\\
    & \quad - \left. \Big(y^*_\mu(x_k) + \sigma \nabla_y \mathcal{L}_\mu(x, y^*_\mu(x_k))\Big) \right\| \nonumber\\
    &\leq \|\nabla g_k(y_{k-1})-\nabla g_k(y^*_\mu(x_k))\|+\sigma\|G_{k-1}^y-\nabla_y\mathcal L_\mu(x_k,y_{k-1})\| \nonumber\\
    &\leq \rho_D\|y_{k-1}-y^*_\mu(x_k)\|+\sigma\|G_{k-1}^y-\nabla_y\mathcal L_\mu(x_k,y_{k-1})\|,
\end{align*}
where $g_k(y)\triangleq \frac{1}{2}\|y\|^2+\sigma \mathcal L_\mu(x_k,y)$ and $\rho_D\triangleq \max\{| 1 - \sigma(L^\Phi_{yy} + \mu) |, | 1 - \sigma\mu|\}=L_{yy}^\Phi/(L_{yy}^\Phi+2\mu)$. Next, using the definition of $G^y_k$ and $\nabla_y\mathcal L$ and considering Assumption \ref{assump:grad-xytheta-lip}-(3) we obtain
\begin{align*}
    \left\| y_k - y^*_\mu(x_k) \right\|&\leq\rho_D\|y_{k-1}-y^*_\mu(x_k)\|+\sigma\|\nabla_y\Phi(x_k,\theta_k,y_{k-1})-\nabla_y\Phi(x_k,\theta^*(x_k),y_{k-1})\| \nonumber\\
    &\leq \rho_D\|y_{k-1}-y^*_\mu(x_k)\|+\sigma L^\Phi_{y\theta}\|\theta_k-\theta^*(x_k)\|.
\end{align*}
Using Lemma \ref{lem:lower_GD_C} we can show
\begin{align*}
    \left\| y_k - y^*_\mu(x_k) \right\|&\leq \rho_D\|y_{k-1}-y^*_\mu(x_k)\|+\sigma L^\Phi_{y\theta}(\beta^k \left\| \theta_0 - \theta^*(x_0) \right\| + \bL_\theta \sum_{i=0}^{k-1} \beta^{k-i} \left\| x_i - x_{i+1} \right\|).
\end{align*}
Considering $x_{k+1} - x_k = \gamma (s_k-x_k)$ 
we can obtain the desired result.
\end{proof}
\begin{lemma} \label{lem: y-ymu-bound}
    Under the premises of Lemma \ref{lem: lower-y-ymu}, for any $k\geq 0$ we have
    \begin{align}\label{eq:final-bound-yk}
    \left\| y_k - y^*_\mu(x_k) \right\| &\leq \rho_D^k \left\| y_0 - y^*_\mu(x_0) \right\| + \frac{\gamma L_{y\mu}}{\mu} \sum_{i=0}^{k-1} \rho_D^{k-i} \left\| s_i - x_i \right\| + \sigma L_{y\theta}^\Phi \left\|\theta_0 - \theta^*(x_0)\right\|\sum_{i=1}^{k}\rho^{k-i}_D\beta^i \nonumber\\
    & \quad + \sigma\gamma L_{y\theta}^\Phi \bL_\theta\sum_{i=1}^k\rho^{k-i}_D  \sum_{j=0}^{i-1} \beta^{i-j} \left\| s_j - x_j \right\| .
\end{align}
Moreover, for any $K\geq 1$,
\begin{equation}\label{eq:sum-Ak-y*}
    \sum_{k=0}^{K-1}\left\| y_k - y^*_\mu(x_k) \right\|\|s_k-x_k\|\leq \Gamma_1+\Gamma_2\sum_{k=0}^{K-1} \|s_k-x_k\|^2,
\end{equation}
where $ \Gamma_1\triangleq \frac{1}{1-\rho_D}\left\| y_0 - y^*_\mu(x_0) \right\|D_\cX+\frac{\sigma L_{y\theta}^\Phi \beta D_\cX}{(1-\rho_D)(1-\beta)} \left\|\theta_0 - \theta^*(x_0)\right\|=\cO(\frac{\kappa_g}{\mu}L_{y\theta}^\Phi)$ and $\Gamma_2\triangleq \frac{\gamma L_{y\mu}}{\mu (1-\rho_D)}+\frac{\sigma \gamma \beta L_{y\theta}^\Phi \bL_\theta}{(1-\beta)(1-\rho_D)}=\cO\left(\frac{\kappa_gL_{y\theta}^\Phi+L_{y x}^\Phi}{\mu^2}\gamma +\frac{\kappa_g^2L_{y\theta}^\Phi}{\mu}\gamma\right)$.
Furthermore, if $L^\Phi_{yy} = 0$, in \eqref{eq:sum-Ak-y*} we have that 
$\Gamma_1 = \frac{\sigma L^\Phi_{y\theta}}{(1-\beta)}\|\theta_0 - \theta^*(x_0)\|D_{\mathcal{X}} = \mathcal{O}(\frac{\kappa_g L^\Phi_{y\theta}}{\mu})$ and $\Gamma_2 = \frac{\gamma L_{y\mu}}{\mu}+ \frac{\sigma \gamma\beta L^\Phi_{y\theta}  \bL_\theta}{(1-\beta)} = \cO(\frac{\kappa_g^2 L^\Phi_{y \theta}}{\mu}\gamma)$.
\end{lemma}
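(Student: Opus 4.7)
The plan is to first establish the pointwise bound \eqref{eq:final-bound-yk} by unrolling the one-step inequality in Lemma \ref{lem: lower-y-ymu}, and then to obtain \eqref{eq:sum-Ak-y*} by summing against $\|s_k-x_k\|$ and using a combination of the diameter bound $\|s_k-x_k\|\leq D_\cX$ with AM-GM for the cross-product terms.

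For \eqref{eq:final-bound-yk}, the issue is that the right-hand side of Lemma \ref{lem: lower-y-ymu} contains $\|y_{k-1}-y^*_\mu(x_k)\|$ rather than $\|y_{k-1}-y^*_\mu(x_{k-1})\|$. I would split this term via the triangle inequality and apply Lemma \ref{lem: ymu-lip} together with $x_k-x_{k-1}=\gamma(s_{k-1}-x_{k-1})$ to obtain a clean recursion of the form
\begin{align*}
    E_k \leq \rho_D E_{k-1} + \rho_D\tfrac{L_{y\mu}}{\mu}\gamma\|s_{k-1}-x_{k-1}\| + \sigma L^\Phi_{y\theta}\beta^k\|\theta_0-\theta^*(x_0)\| + \sigma L^\Phi_{y\theta}\gamma\bL_\theta\sum_{i=0}^{k-1}\beta^{k-i}\|s_i-x_i\|,
\end{align*}
where $E_k\triangleq\|y_k-y^*_\mu(x_k)\|$. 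Unrolling from $k$ down to $0$ and absorbing the factor $\rho_D\leq 1$ produces exactly the four terms of \eqref{eq:final-bound-yk}.

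For \eqref{eq:sum-Ak-y*}, I multiply the bound \eqref{eq:final-bound-yk} by $a_k\triangleq\|s_k-x_k\|$ and sum over $k=0,\dots,K-1$. The two terms that do not couple $a_k$ with an earlier $a_i$ (the $\rho_D^k E_0$ term and the $\beta^i$/$\theta$-error term) are bounded by simply using $a_k\leq D_\cX$, swapping the order of summation where a double sum appears, and applying the geometric series identities $\sum_k\rho_D^k\leq 1/(1-\rho_D)$ and $\sum_{i=1}^{K-1}\beta^i\sum_{k=i}^{K-1}\rho_D^{k-i}\leq \beta/\big((1-\beta)(1-\rho_D)\big)$; this yields exactly $\Gamma_1$. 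For the two ``mixed'' terms involving $a_i a_k$ with $i<k$, I apply the Young/AM-GM inequality $a_ia_k\leq\tfrac12(a_i^2+a_k^2)$ and then swap the order of summation (Fubini), which in each case gives an extra factor of $1/(1-\rho_D)$ or $\beta/\big((1-\beta)(1-\rho_D)\big)$ on $\sum_k a_k^2$; this yields $\Gamma_2$ (again after absorbing a factor $\rho_D\leq 1$ in the first part).

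For the linear case $L^\Phi_{yy}=0$, I would instead start from the direct (non-recursive) bound at the end of Lemma \ref{lem: lower-y-ymu}, multiply by $a_k$ and sum, in which case only the $\beta^k T_0$ term and the $\sum_i\beta^{k-i}a_i$ term need to be handled; the same $D_\cX$-and-geometric-series argument gives the stated $\Gamma_1$, and the same AM-GM plus Fubini argument on $\sum_k a_k\sum_{i<k}\beta^{k-i}a_i$ gives the $\sigma\gamma\beta L^\Phi_{y\theta}\bL_\theta/(1-\beta)$ portion of $\Gamma_2$. I anticipate the main bookkeeping obstacle to be the triple sum in the last term of \eqref{eq:final-bound-yk}: after AM-GM, the $a_j^2$ portion requires exchanging three indices and carefully identifying that the remaining double geometric sum collapses to $\beta/\big((1-\beta)(1-\rho_D)\big)$, rather than producing any worse dependence on $\beta$ or $\rho_D$.
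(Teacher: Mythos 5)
Your proposal is correct and follows essentially the same route as the paper: the pointwise bound is obtained by inserting $y^*_\mu(x_{k-1})$ via the triangle inequality, invoking the Lipschitz continuity of $y^*_\mu$ together with $x_k-x_{k-1}=\gamma(s_{k-1}-x_{k-1})$, and unrolling the resulting recursion, while the summed bound uses exactly the diameter bound for the non-coupled terms and Young's inequality plus summation exchange for the coupled ones (which is precisely what the paper's auxiliary Lemma \ref{lem:sum-sum-linear-comb} packages, including the triple-sum case). The handling of $L^\Phi_{yy}=0$ via the direct non-recursive bound (equivalently, setting $\rho_D=0$) also matches the paper.
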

\begin{proof}
    From Lemma \ref{lem: lower-y-ymu} we have 
    \begin{align*}
        \left\| y_k - y^*_\mu(x_k) \right\| \leq \rho_{D} \left\| y_{k-1} - y^*_\mu(x_k) \right\| + \sigma L_{y\theta}^\Phi(\beta^k \left\|\theta_0 - \theta^*(x_0)\right\| + \gamma\bL_\theta \sum_{i=0}^{k-1} \beta^{k-i} \left\| s_i - x_i \right\|).
    \end{align*}
    By adding and subtracting $y^*_\mu(x_{k-1})$ and using triangle inequality we obtain
    \begin{align*}
        \left\| y_k - y^*_\mu(x_k) \right\| &\leq \rho_D (\left\| y_{k-1} - y^*_\mu(x_{k-1}) \right\| + \left\| y^*_\mu(x_k) - y^*_\mu(x_{k-1}) \right\|) + \sigma L_{y\theta}^\Phi\Big(\beta^k \left\|\theta_0 - \theta^*(x_0)\right\| \nonumber\\
        & \quad + \gamma\bL_\theta \sum_{i=0}^{k-1} \beta^{k-i} \left\| s_i - x_i \right\|\Big) \nonumber\\
        &\leq \rho_D \left\| y_{k-1} - y^*_\mu(x_{k-1}) \right\| + \frac{L_{y\mu}}{\mu}\rho_D \left\| x_k - x_{k-1} \right\| + \sigma L_{y\theta}^\Phi\Big(\beta^k \left\|\theta_0 - \theta^*(x_0)\right\|\nonumber\\
        &\quad +\gamma\bL_\theta \sum_{i=0}^{k-1} \beta^{k-i} \left\| s_i - x_i \right\| \Big),
    \end{align*}
    where in the last inequality, we applied Lemma \ref{lem: ymu-lip}. 
    
    Continuing the above inequalities recursively we obtain 
    \begin{align*}
    \left\| y_k - y^*_\mu(x_k) \right\| &\leq \rho_D^k \left\| y_0 - y^*_\mu(x_0) \right\| + \frac{\gamma L_{y\mu}}{\mu} \sum_{i=0}^{k-1} \rho_D^{k-i} \left\| s_i - x_i \right\| + \sum_{i=1}^{k} \sigma L_{y\theta}^\Phi \Big(\rho^{k-i}_D\beta^i \left\|\theta_0 - \theta^*(x_0)\right\| \nonumber\\
    & \quad + \rho^{k-i}_D \gamma\bL_\theta \sum_{j=0}^{i-1} \beta^{i-j} \left\| s_j - x_j \right\| \Big),
\end{align*}%
which leads to the result in \eqref{eq:final-bound-yk} by rearranging the terms.




Moreover, using Lemma \ref{lem:sum-sum-linear-comb} and the fact that $\|s_k - x_k\| \leq D_{\mathcal{X}}$, one can verify that for any $K\geq 1$, 
\begin{align*}
    \sum_{k = 0}^{K-1}\left\| y_k - y^*_\mu(x_k) \right\| \|s_k-x_k\|&\leq  \frac{1}{1-\rho_D}\left\| y_0 - y^*_\mu(x_0) \right\|D_\cX + \frac{\gamma L_{y\mu}}{\mu (1-\rho_D)}\sum_{k=0}^{K-1} \|s_k-x_k\|^2  \nonumber \\
    & \quad + \frac{\sigma L_{y\theta}^\Phi \beta D_\cX}{(1-\rho_D)(1-\beta)} \left\|\theta_0 - \theta^*(x_0)\right\|  + \frac{\sigma \gamma \beta L_{y\theta}^\Phi \bL_\theta}{(1-\beta)(1-\rho_D)} \sum_{k=0}^{K-1} \|s_k-x_k\|^2 .
    \nonumber \\
\end{align*}
Finally, if $L^\Phi_{yy} = 0$, then $\rho_D = 0$, selecting $\sigma_k = \sigma = \frac{1}{\mu}$, we have
\begin{align*}
    \sum_{k = 0}^{K-1}\left\| y_k - y^*_\mu(x_k) \right\| \|s_k-x_k\|&\leq \frac{L^\Phi_{y\theta}}{\mu (1-\beta)}\|\theta_0 - \theta^*(x_0)\|D_{\mathcal{X}} + \frac{L^\Phi_{y\theta} \gamma \bL_\theta \beta}{\mu (1-\beta)} \sum_{k=0}^{K-1} \|s_k-x_k\|^2 .
\end{align*}
Note that $\bL_\theta=\cO(\kappa_g)$, $L_{y\mu}=\cO(\kappa_g)$, $1-\rho_D=\cO(\mu)$, and $1-\beta=\cO(1/\kappa_g)$. 
\end{proof}

\begin{lemma} \label{lem: y-bound}
    Suppose Assumptions \ref{assump:grad-xytheta-lip}, \ref{assump:g-conditions} hold. For any $k\geq0$ we have
    \begin{align} \label{eq: lower-y}
        \left\| y_{k+1} - y_k \right\| &\leq 2\rho_D^k \left\| y_0 - y^*_\mu(x_0) \right\| + \frac{2\gamma L_{y\mu}}{\mu} \sum_{i=0}^{k-1} \rho_D^{k-i} \left\| s_i - x_i \right\| + 2\sigma L_{y\theta}^\Phi \left\|\theta_0 - \theta^*(x_0)\right\|\sum_{i=1}^{k}\rho^{k-i}_D\beta^i \nonumber\\
        & \quad + 2\sigma\gamma L_{y\theta}^\Phi \bL_\theta\sum_{i=1}^k\rho^{k-i}_D  \sum_{j=0}^{i-1} \beta^{i-j} \left\| s_j - x_j \right\|+ \frac{L_{y\mu}}{\mu}\gamma \norm{s_k-x_k}.
    \end{align}
    Moreover, for any $K\geq 1$
    \begin{align}\label{eq:sum-Ak-yk}
        \sum_{k=0}^{K-1}\norm{s_k-x_k}\sum_{i=0}^{k-1}\rho^{k-i}\norm{y_{i+1}-y_i}\leq 2\Gamma_1+\Gamma_3\sum_{k=0}^{K-1} \|s_k-x_k\|^2,
    \end{align} 
    where $\Gamma_3\triangleq \frac{2\Gamma_2}{1-\rho}+\frac{L_{y\mu}\gamma}{(1-\rho)\mu}$, and $\Gamma_1,\Gamma_2$ are defined in Lemma \ref{lem: y-ymu-bound} and $\Gamma_3= \cO(\frac{\kappa_g^2L_{y\theta}^\Phi+\kappa_g L_{yx}^\Phi}{\mu^2}\gamma+\frac{\kappa_g^3L_{y\theta}^\Phi}{\mu}\gamma)$. 
    Furthermore, if $L^\Phi_{yy} = 0$, then $\rho_D = 0$, and the bound attained in \eqref{eq: lower-y} can be rewritten as
    \begin{align*} 
        \left\| y_{k+1} - y_k \right\| &\leq \frac{2L_{y\theta}^\Phi}{\mu}  \left\|\theta_0 - \theta^*(x_0)\right\|\beta^k + \frac{2\gamma L_{y\theta}^\Phi \bL_\theta}{\mu} \sum_{i=0}^{k-1} \beta^{k-i} \left\| s_i - x_i \right\| + \frac{L_{y\mu}}{\mu}\gamma \norm{s_k-x_k}.
    \end{align*}
    In this case, $\Gamma_3 = \cO(\frac{\kappa_g^3 L^\Phi_{y \theta}}{\mu}\gamma + \frac{\kappa_g^2 L^\Phi_{y \theta} + \kappa_g L^\Phi_{y x}}{\mu}\gamma)$.
\end{lemma}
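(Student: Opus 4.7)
My approach splits into two parts matching the statement: the pointwise bound \eqref{eq: lower-y} and the aggregated bound \eqref{eq:sum-Ak-yk}.

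For \eqref{eq: lower-y}, the plan is to insert the dual-optimal trajectory $y^*_\mu(\cdot)$ via the triangle inequality
\begin{equation*}
\|y_{k+1}-y_k\| \le \|y_{k+1}-y^*_\mu(x_{k+1})\| + \|y^*_\mu(x_{k+1})-y^*_\mu(x_k)\| + \|y^*_\mu(x_k)-y_k\|.
\end{equation*}
The middle term is controlled by Lemma \ref{lem: ymu-lip} combined with the primal update rule $x_{k+1}-x_k=\gamma(s_k-x_k)$, yielding exactly the isolated summand $\tfrac{L_{y\mu}}{\mu}\gamma\|s_k-x_k\|$ in the stated bound. The two outer terms are then handled by Lemma \ref{lem: y-ymu-bound} applied at indices $k+1$ and $k$, respectively. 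Using $\rho_D\le 1$ to merge $\rho_D^{k+1}+\rho_D^k\le 2\rho_D^k$ and to shift the ranges of the $\rho_D$-sums (the extra boundary contribution at $i=k$ is bounded by $\rho_D$ times an already-present quantity), one recovers the factor of $2$ on each of the remaining four summands.

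For \eqref{eq:sum-Ak-yk}, I would substitute the pointwise bound \eqref{eq: lower-y} into the inner sum and split the resulting expression into its five contributions. The two purely data-dependent terms (carrying $\|y_0-y^*_\mu(x_0)\|$ and $\|\theta_0-\theta^*(x_0)\|$) are multiplied by $\|s_k-x_k\|\le D_\cX$; summing the resulting geometric and double-geometric series in $\rho$, $\rho_D$, and $\beta$ reproduces exactly $2\Gamma_1$ by comparison with the definition of $\Gamma_1$ from Lemma \ref{lem: y-ymu-bound}. The three remaining terms generate products $\|s_k-x_k\|\|s_j-x_j\|$ (with either $j=k$ from the Lipschitz contribution, or $j<k$ from the lower-level tracking history); applying Young's inequality $\|s_k-x_k\|\|s_j-x_j\|\le \tfrac12(\|s_k-x_k\|^2+\|s_j-x_j\|^2)$ and reindexing collapses the triple summation into a single sum over $\|s_k-x_k\|^2$. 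The geometric tails $\sum_{m\ge 0}\rho^m = \tfrac{1}{1-\rho}$, $\sum_{m\ge 0}\rho_D^m = \tfrac{1}{1-\rho_D}$, $\sum_{m\ge 0}\beta^m = \tfrac{1}{1-\beta}$ combine to give the coefficient $\Gamma_3 = \tfrac{2\Gamma_2}{1-\rho} + \tfrac{L_{y\mu}\gamma}{(1-\rho)\mu}$.

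The main technical burden is the bookkeeping of the three interacting geometric decays ($\rho$ from the Hessian-correction step, $\rho_D$ from the dual regularization, and $\beta$ from the lower-level gradient step); I expect Lemma \ref{lem:sum-sum-linear-comb} (already invoked in the proof of Lemma \ref{lem: y-ymu-bound}) to dispose of the nested sums in one stroke once the Young-type decoupling has been applied, so that no ad hoc index manipulation is needed. The specialization $L^\Phi_{yy}=0$ is then immediate: it forces $\rho_D=L^\Phi_{yy}/(L^\Phi_{yy}+2\mu)=0$, annihilating every $\rho_D$-weighted geometric sum except the boundary $i=k$ contribution, and reducing both bounds to the simpler stated form with $\Gamma_1$ and $\Gamma_3$ updated to their $\rho_D=0$ values.
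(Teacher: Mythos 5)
Your proposal follows essentially the same route as the paper: the triangle inequality through $y^*_\mu(x_{k+1})$ and $y^*_\mu(x_k)$, Lemma \ref{lem: ymu-lip} for the middle term, Lemma \ref{lem: y-ymu-bound} at indices $k+1$ and $k$ with $\rho_D\le 1$ to merge into the factors of $2$, and then Young's inequality together with Lemma \ref{lem:sum-sum-linear-comb} to collapse the nested geometric sums for \eqref{eq:sum-Ak-yk}. The only loose end — the $i=k$ boundary term from the $(k+1)$-indexed application, which strictly inflates the coefficient of $\|s_k-x_k\|$ to $(1+\rho_D)\tfrac{L_{y\mu}}{\mu}\gamma$ — is shared with the paper's own terse derivation and is immaterial to the stated orders.
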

\begin{proof}
    Using the triangle inequality, one can observe that for any $k\geq0$,
    \begin{align*}
        \left\| y_{k+1} - y_k \right\| \leq \left\| y_{k+1} - y^*_\mu (x_{k+1}) \right\| + \left\| y^*_\mu (x_{k}) - y_k \right\| + \left\| y^*_\mu (x_{k+1}) - y^*_\mu (x_{k}) \right\|.
    \end{align*}
    Using Lemma \ref{lem: ymu-lip} and \ref{lem: y-ymu-bound}, we can obtain
    \begin{align*}
        \left\| y_{k+1} - y_k \right\| &\leq 2\rho_D^k \left\| y_0 - y^*_\mu(x_0) \right\| + \frac{2\gamma L_{y\mu}}{\mu} \sum_{i=0}^{k-1} \rho_D^{k-i} \left\| s_i - x_i \right\| + 2\sigma L_{y\theta}^\Phi \left\|\theta_0 - \theta^*(x_0)\right\|\sum_{i=1}^{k}\rho^{k-i}_D\beta^i \nonumber\\
        & \quad + 2\sigma\gamma L_{y\theta}^\Phi \bL_\theta\sum_{i=1}^k\rho^{k-i}_D  \sum_{j=0}^{i-1} \beta^{i-j} \left\| s_j - x_j \right\|+ \frac{L_{y\mu}}{\mu}\gamma \norm{s_k-x_k}.
    \end{align*}

    Moreover, the second part of the lemma can be proved with a similar discussion as presented in the proof of Lemma \ref{lem: y-ymu-bound}, by employing the results in Lemma \ref{lem:sum-sum-linear-comb}. 
\end{proof}

\subsection{Analysis of Gradients Estimation}\label{subsec:gradients-estimation}
In this section, we aim to assess how accurately the gradient estimates, denoted as $G^x_k$ and $G^y_k$, approximate the implicit gradients $\nabla_x\mathcal L(x_k,y_k)$ and $\nabla_y\mathcal L(x_k,y_k)$, respectively. Through Lemmas \ref{lem: err-gradxL} and \ref{lem: err-gradyL}, we analyze the bounds on the error in estimating these gradients. We demonstrate that the errors may depend linearly on certain terms or involve consecutive terms, helping us in controlling the error. Later, in the convergence analysis, we will also specify the step-sizes to ensure convergence.  
\begin{lemma}\label{lem: err-gradxL}
    Suppose Assumptions \ref{assump:grad-xytheta-lip} and \ref{assump:g-conditions} hold. Moreover, let $\{x_k,\theta_k,w_k\}_{k\geq0}$ be the sequence generated by Algorithm \ref{i-BRPD:OPF} or \ref{i-BRPD:FP}. Then, for any $k\geq0$
    \begin{align}\label{eq: grad-xL-err}
         \left\| \nabla_x\mathcal L(x_k,y_k) - G^x_k \right\| 
    & \leq \Tilde{C}_2\beta^k \left\| \theta_0 - \theta^*(x_0) \right\| + \Tilde{C}_2\gamma \bL_\theta \sum_{i=0}^{k-1} \beta^{k-i} \left\| s_i - x_i \right\| \nonumber\\
    & \quad + C_{\theta x}^g \Big(\rho^{k+1} \|w_0 - v(x_0,y_0)\| + \gamma \bC_{v1} \sum_{i=0}^{k-1} \rho^{k-i} \|s_i - x_i \| \nonumber\\
    &\quad +  \bC_{v2} \sum_{i=0}^{k-1} \rho^{k-i} \|y_{i+1} - y_i \| +\Tilde{C}\left\| \theta_0 - \theta^*(x_0) \right\| k\rho^k \nonumber\\
    & \quad + \gamma \Tilde{C}\bL_\theta \sum_{i=0}^k\sum_{j=0}^{i-1} \rho^{k-i} \beta^{i-j} \left\| s_j -x_j \right\| \Big),
    \end{align}
    where $\Tilde{C}_2\triangleq \frac{C_\theta^\Phi}{\mu_g} L_{\theta x}^g + L_{x\theta}^\Phi$. Moreover, for any $K\geq 1$ 
    \begin{align}\label{eq:sum-Ak-grad}
        \sum_{k=0}^{K-1}\norm{s_k-x_k}\norm{ \nabla_x\mathcal L(x_k,y_k) - G^x_k}\leq \Gamma_4+2\bC_{v2}\Gamma_1+(\bC_{v2}\Gamma_3+\Gamma_5)\sum_{k=0}^{K-1}\norm{s_k-x_k}^2,
    \end{align} 
    where $\Gamma_1,\Gamma_3$ are defined in Lemma \ref{lem: y-ymu-bound} and \ref{lem: y-bound}, respectively, $\Gamma_4\triangleq D_\cX\norm{\theta_0-\theta^*(x_0)}(\frac{\Tilde{C}_2}{1-\beta}+\frac{\rho\Tilde{C}}{(1-\rho)^2})+ \frac{\rho C_{\theta x}^g D_\cX}{1-\rho}\norm{w_0-v(x_0,y_0)}$ and $\Gamma_5\triangleq \frac{\gamma\beta \bL_\theta}{1-\beta}(1+\frac{\Tilde{C}}{1-\rho})+\frac{\rho\gamma \bC_{v1}C_{\theta x}^g}{1-\rho}$. In particular, $\Gamma_4+2\bC_{v2}\Gamma_1=\cO(\kappa_g^3+L_{y\theta}^\Phi \kappa_g^2/\mu)$ and $\bC_{v2}\Gamma_3+\Gamma_5=\cO\left(\gamma(\frac{\kappa_g^3}{\mu^2}+\frac{\kappa_g^4}{\mu})(L_{y\theta}^\Phi)^2+\gamma\frac{\kappa_g^2}{\mu^2}L_{y\theta}^\Phi L_{yx}^\Phi+\gamma\kappa_g^4\right)$. Moreover, in case of $L^\Phi_{yy} = 0$, we have  $\bC_{v2}\Gamma_3+\Gamma_5=\cO\left(\gamma\frac{ \kappa_g^4}{\mu}(L_{y\theta}^\Phi)^2+\gamma\frac{\kappa_g^2}{\mu}L_{y\theta}^\Phi L_{yx}^\Phi+\gamma\kappa_g^4\right)$.
\end{lemma}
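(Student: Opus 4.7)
The plan is to directly decompose the gradient estimation error via triangle inequality and then appeal to the previously established lemmas. From the definitions, write
\begin{align*}
\nabla_x\mathcal L(x_k,y_k) - G^x_k &= \bigl[\nabla_x\Phi(x_k,\theta^*(x_k),y_k) - \nabla_x\Phi(x_k,\theta_k,y_k)\bigr] \\
&\quad + \bigl[\nabla^2_{\theta x}g(x_k,\theta_k) - \nabla^2_{\theta x}g(x_k,\theta^*(x_k))\bigr] v(x_k,y_k) \\
&\quad + \nabla^2_{\theta x}g(x_k,\theta_k)\bigl[w_{k+1} - v(x_k,y_k)\bigr].
\end{align*}
By Assumption \ref{assump:grad-xytheta-lip}, the first bracket is bounded in norm by $L^\Phi_{x\theta}\|\theta_k-\theta^*(x_k)\|$. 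By Assumption \ref{assump:g-conditions}-(4) combined with the a priori bound $\|v(x_k,y_k)\|\leq C^\Phi_\theta/\mu_g$ (which follows from Assumption \ref{assump:grad-phi-bounded} and $\mu_g$-strong convexity of $g$), the second bracket is bounded by $\frac{C^\Phi_\theta L^g_{\theta x}}{\mu_g}\|\theta_k-\theta^*(x_k)\|$. The third term is bounded by $C^g_{\theta x}\|w_{k+1}-v(x_k,y_k)\|$ using Remark \ref{remak:hessiang-conditions}. The coefficient in front of $\|\theta_k-\theta^*(x_k)\|$ is precisely $\tilde C_2 = \frac{C^\Phi_\theta}{\mu_g}L^g_{\theta x} + L^\Phi_{x\theta}$.

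Next I would substitute in the bounds of Lemma \ref{lem:lower_GD_C} for $\|\theta_k-\theta^*(x_k)\|$ and of Lemma \ref{lem:lower-w-v} for $\|w_{k+1}-v(x_k,y_k)\|$, using that $\|x_{k+1}-x_k\| = \gamma\|s_k-x_k\|$ to rewrite the tracking errors in terms of the primal gap $\|s_i-x_i\|$. Collecting terms yields exactly the bound \eqref{eq: grad-xL-err}, with the five contributions recognizable as: (i) decay of initial $\theta$-error, (ii) cumulative $\|s_i-x_i\|$ terms with geometric kernel $\beta^{k-i}$, (iii) decay of initial $w$-error, (iv) cumulative $\|s_i-x_i\|$ and $\|y_{i+1}-y_i\|$ terms with kernel $\rho^{k-i}$, and (v) the double-sum term with mixed $\rho^{k-i}\beta^{i-j}$ kernel.

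For the sum bound \eqref{eq:sum-Ak-grad}, I would multiply \eqref{eq: grad-xL-err} by $\|s_k-x_k\|$ and sum over $k=0,\dots,K-1$. Terms independent of the iterates (initial errors multiplied by $\beta^k,k\rho^k,\rho^{k+1}$) are handled by $\|s_k-x_k\|\leq D_\cX$ and summation of the convergent series $\sum_k \beta^k$, $\sum_k k\rho^k = \rho/(1-\rho)^2$, $\sum_k\rho^{k+1}$, contributing to $\Gamma_4$. The sums involving $\sum_i \rho^{k-i}\|s_i-x_i\|$ and the double sum $\sum_i\sum_j \rho^{k-i}\beta^{i-j}\|s_j-x_j\|$ multiplied by $\|s_k-x_k\|$ are handled by the auxiliary summation result (referenced as Lemma \ref{lem:sum-sum-linear-comb} in the paper) which converts such cross-terms into a multiple of $\sum_k\|s_k-x_k\|^2$; these produce $\Gamma_5 \sum_k\|s_k-x_k\|^2$. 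The $\bC_{v2}\sum_i \rho^{k-i}\|y_{i+1}-y_i\|$ term, after multiplication by $\|s_k-x_k\|$ and summation over $k$, matches exactly the left-hand side of \eqref{eq:sum-Ak-yk} in Lemma \ref{lem: y-bound}, so we may substitute its upper bound $2\Gamma_1 + \Gamma_3\sum_k\|s_k-x_k\|^2$ scaled by $\bC_{v2}$.

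The main bookkeeping obstacle is not any single estimate but the combined accounting: five time-dependent summands in \eqref{eq: grad-xL-err} are paired with $\|s_k-x_k\|$, and each must be routed to either the constant $\Gamma_4+2\bC_{v2}\Gamma_1$ or the $\|s_k-x_k\|^2$-coefficient $\bC_{v2}\Gamma_3+\Gamma_5$ with the correct geometric-series prefactor $1/(1-\beta)$, $1/(1-\rho)$, or $\rho/(1-\rho)^2$. Keeping the resulting $\kappa_g$ and $1/\mu$ powers consistent with the stated big-$\cO$ expressions (in particular recognizing $\bL_\theta,\bC_{v1} = \cO(\kappa_g)$, $\bC_{v2} = \cO(L^\Phi_{y\theta}/\mu_g)$, and $1-\beta = \cO(1/\kappa_g)$) is the delicate step. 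The special case $L^\Phi_{yy}=0$ follows verbatim by using the corresponding simplifications in Lemmas \ref{lem: y-ymu-bound} and \ref{lem: y-bound} where $\rho_D=0$.
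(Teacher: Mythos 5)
Your proof follows essentially the same route as the paper's: the identical three-term decomposition (adding and subtracting $\nabla^2_{\theta x}g(x_k,\theta_k)v(x_k,y_k)$), the same bounds via Assumptions \ref{assump:grad-xytheta-lip}, \ref{assump:g-conditions}, $\|v\|\leq C^\Phi_\theta/\mu_g$, and Lemmas \ref{lem:lower_GD_C} and \ref{lem:lower-w-v}, and the same summation bookkeeping via Lemma \ref{lem:sum-sum-linear-comb} and \eqref{eq:sum-Ak-yk}. The argument is correct and matches the paper's proof.
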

\begin{proof}
    We begin the proof by considering the definition of $\nabla_x\mathcal L(x_k,y_k)$ and $G^x_k$ followed by a triangle inequality to obtain
    \begin{align*}
    \left\| \nabla_x\mathcal L(x_k,y_k) - G^x_k \right\| &= \left\| \nabla_x \Phi(x_k,\theta^*(x_k),y_k) -\nabla_{\theta x}^2 g(x_k,\theta^{*}(x_k)) v(x_k,y_k) \right. \nonumber\\
    & \quad - \left. [\nabla_x \Phi(x_k,\theta_k,y_k) -\nabla_{\theta x}^2 g(x_k,\theta_k)w_{k+1}] \right\| \nonumber\\
    & \leq \left\| \nabla_x \Phi(x_k,\theta^*(x_k),y_k) - \nabla_x \Phi(x_k,\theta_k,y_k) \right\| + \left\| \nabla_{\theta x}^2 g(x_k,\theta_k)w_{k+1} \right. \nonumber\\
    & \quad - \left. \nabla_{\theta x}^2 g(x_k,\theta^{*}(x_k)) v(x_k,y_k) \right\|.
\end{align*}
Combining Assumption \ref{assump:grad-xytheta-lip}-(1) together with adding and subtracting $\nabla_{\theta x}^2 g(x_k,\theta_k)v(x_k,y_k)$ to the second term of RHS lead to
\begin{align*} 
    \left\| \nabla_x\mathcal L(x_k,y_k) - G^x_k \right\| &\leq L_{x\theta}^\Phi \left\| \theta_k - \theta^*(x_k)\right\| + \left\| \nabla_{\theta x}^2 g(x_k,\theta_k) [w_{k+1}-v(x_k,y_k)] \right. \nonumber\\
    & \quad + \left. [\nabla_{\theta x}^2 g(x_k,\theta_k) - \nabla_{\theta x}^2 g(x_k,\theta^{*}(x_k))]v(x_k,y_k) \right\| \nonumber\\
    & \leq L_{x\theta}^\Phi \left\| \theta_k - \theta^*(x_k)\right\| + C_{\theta x}^g \left\| w_{k+1}-v(x_k,y_k) \right\| + \frac{C_\theta^\Phi}{\mu_g} L_{\theta x}^g \left\|\ \theta_k - \theta^*(x_k)\right\|,
\end{align*}
where the last inequality is obtained using Assumption \ref{assump:g-conditions} and the triangle inequality. Next, utilizing Lemma \ref{lem:lower_GD_C} and \ref{lem:lower-w-v}, we can further provide upper-bounds for the terms in RHS of the above inequality to obtain the desired result in \eqref{eq: grad-xL-err}.

To show the result in \eqref{eq:sum-Ak-grad}, note that $\sum_{k=0}^{K-1}k\rho^k\leq \rho/(1-\rho)^2$. Moreover, one can multiply $\norm{s_k-x_k}$ to each term on the RHS of \eqref{eq: grad-xL-err} and sum over $k$ from $0$ to $K-1$ and use Lemma \ref{lem:sum-sum-linear-comb} and \eqref{eq:sum-Ak-yk}. Finally, note that $\bC_{v1}=\cO(\kappa_g^3)$, $\bC_{v2}=\cO(\kappa_g)$, $\Tilde{C}=\cO(\kappa_g)$, $\Tilde{C}_2=\cO(\kappa_g)$, and $1-\rho=\cO(1/\kappa_g)$. 
\end{proof}
\begin{lemma} \label{lem: err-gradyL}
    Using the Assumption \ref{assump:grad-xytheta-lip}-(3), we can find a bound of $\left\| \nabla_y\mathcal L(x_k,y_k) - G_k^y \right\|$ as follows
    \begin{align*}
        \left\| \nabla_y\mathcal L(x_k,y_k) - G_k^y \right\| \leq \beta^k \left\| \theta_0 - \theta^*(x_0) \right\| + \gamma L_{y\theta}^\Phi\bL_\theta \sum_{i=0}^{k-1} \beta^{k-i} \left\| s_i - x_i \right\| + \mu D_\mathcal{Y}.
    \end{align*}
\end{lemma}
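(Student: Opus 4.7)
The plan is to reduce the gradient-estimation error to an error in tracking the lower-level optimum, since in the dual direction only two sources of error appear: the plug-in of $\theta_k$ in place of $\theta^*(x_k)$, and the extra regularization term introduced in \eqref{eq:minmax-lmu}. Concretely, I will start from the identities $\nabla_y \mathcal L(x_k,y_k)=\nabla_y\Phi(x_k,\theta^*(x_k),y_k)$ from \eqref{eq:grady-L} and $G_k^y = \nabla_y\Phi(x_k,\theta_k,y_k)-\mu(y_k-y_0)$ (as used in the dual update and in the proof of Lemma \ref{lem: lower-y-ymu}), then add and subtract $\nabla_y\Phi(x_k,\theta_k,y_k)$ and apply the triangle inequality to obtain
\[
\left\|\nabla_y\mathcal L(x_k,y_k)-G_k^y\right\| \leq \left\|\nabla_y\Phi(x_k,\theta^*(x_k),y_k)-\nabla_y\Phi(x_k,\theta_k,y_k)\right\| + \mu\left\|y_k-y_0\right\|.
\]

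Next, I will bound the first term on the right-hand side using the partial Lipschitz continuity of $\nabla_y\Phi$ with respect to its $\theta$ argument, which is available from Assumption \ref{assump:grad-xytheta-lip}-(3) and gives a factor of $L_{y\theta}^\Phi \|\theta^*(x_k)-\theta_k\|$. The second term is controlled immediately by Assumption \ref{assump:Dx-compact}, since $y_k,y_0\in\mathcal Y$ implies $\|y_k-y_0\|\leq D_{\mathcal Y}$, producing the $\mu D_{\mathcal Y}$ summand appearing in the statement.

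Finally, I will plug in Lemma \ref{lem:lower_GD_C} to expand $\|\theta_k-\theta^*(x_k)\|$ in terms of the initial lower-level gap and the accumulated primal movement, and rewrite $\|x_{i+1}-x_i\|=\gamma\|s_i-x_i\|$ using the primal update rule in Algorithm \ref{i-BRPD:OPF} (equivalently in Algorithm \ref{i-BRPD:FP}, since the primal difference has the same form). Collecting everything yields the claimed bound. There is no genuine obstacle here: the argument is entirely a triangle-inequality computation, with the only minor subtlety being that the $L_{y\theta}^\Phi$ factor should multiply the initial term $\beta^k\|\theta_0-\theta^*(x_0)\|$ (consistent with the $\bL_\theta$-multiplied sum), otherwise the bound follows directly from Assumption \ref{assump:grad-xytheta-lip}-(3) and Lemma \ref{lem:lower_GD_C} without any further work.
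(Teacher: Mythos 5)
Your proof is correct and follows exactly the paper's argument: decompose $\nabla_y\mathcal L(x_k,y_k)-G_k^y$ via the triangle inequality into the $\theta$-plug-in error (bounded by $L_{y\theta}^\Phi\|\theta_k-\theta^*(x_k)\|$ using Assumption \ref{assump:grad-xytheta-lip}-(3)) plus the regularization term $\mu\|y_k-y_0\|\leq\mu D_{\mathcal Y}$, then invoke Lemma \ref{lem:lower_GD_C} and $\|x_{i+1}-x_i\|=\gamma\|s_i-x_i\|$. Your observation that the factor $L_{y\theta}^\Phi$ should also multiply the initial term $\beta^k\|\theta_0-\theta^*(x_0)\|$ is consistent with what the paper's own derivation actually produces, so the discrepancy lies in the lemma statement rather than in your argument.
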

\begin{proof}
Following the definition of $\nabla_y\mathcal L(x_k,y_k)$ in \eqref{eq:grady-L} and $G_k^y$ in line 6 of the algorithms, we have that
    \begin{align*}
        \left\| \nabla_y\mathcal L(x_k,y_k) - G_k^y \right\| &= \left\| \nabla_y \Phi(x_k,\theta^*(x_k),y_k) - \nabla_y \Phi(x_k,\theta_k,y_k) + \mu(y_k-y_0)\right\| \nonumber\\
        & \leq L_{y\theta}^\Phi \left\| \theta_k - \theta^*(x_k)\right\|+\mu D_\mathcal{Y},
    \end{align*}
    where in the last inequality, we use Lipschitz continuity of $\grad_y\Phi(x,\cdot,y)$ and boundedness of set $\cY$.
    Next, utilizing Lemma \ref{lem:lower_GD_C}, we can further provide upper-bounds for the term in RHS of the above inequality as follows
    \begin{align*}
        \left\| \nabla_y\mathcal L(x_k,y_k) - G_k^y \right\| \leq L_{y\theta}^\Phi (\beta^k \left\| \theta_0 - \theta^*(x_0) \right\| + \bL_\theta \sum_{i=0}^{k-1} \beta^{k-i} \left\| x_i - x_{i+1} \right\|) + \mu D_\mathcal{Y}.
    \end{align*}
    The result follows by noting that $\norm{x_{k+1}-x_k}= \gamma \norm{s_k-x_k}$ for any $k\geq 0$.
\end{proof}
 
 \section{Convergence Results}\label{sec:convg-results}
In this section, we commence by analyzing the complexity of the i-BRPD:OPF method in Subsection \ref{subsec:conv1}. Later, in Subsection \ref{subsec:conv2}, we delve into the iteration complexity analysis of the i-BRPD:FP method across various setups.
\subsection{Convergence Rate of Algorithm \ref{i-BRPD:OPF}}\label{subsec:conv1}
We provide upper bounds for the primal and dual gap functions for the iterates generated by Algorithm \ref{i-BRPD:OPF} in the following theorem. Moreover, we investigate the iteration complexity of i-BRPD:OPF method when the upper-level objective function is linear in $y$ (i.e., $L^\Phi_{yy} = 0$).
\begin{theorem}\label{thm:proj-free}
Suppose Assumptions \ref{assump:grad-xytheta-lip}, \ref{assump:g-conditions} hold and let $\{(x_k,y_k,\theta_k)\}_{k\geq 0}$ be the sequence generated by Algorithm \ref{i-BRPD:OPF} with step-sizes $\gamma_k = \gamma>0$, $\eta_k=\eta>0$, $\alpha>0$, and $\sigma_k=\sigma\leq \frac{2}{L^\Phi_{yy}+2\mu}$ and regularization parameter $\mu>0$. Then, for any $K\geq 1$ we have that
    \begin{align}
    \frac{1}{K}\sum_{k=0}^{K-1}\mathcal{G}_{\mathcal{X}}(x_{k},y_{k}) &\leq \frac{f(x_0) - f(x^*)}{\gamma K} + \frac{B_1}{K} +\gamma B_2 D_\cX^2 + \frac{\mu}{2\gamma K}D_\cY^2, \label{eq:gap-x}\\
    \frac{1}{K}\sum_{k=0}^{K-1}\mathcal{G}_{\mathcal{Y}} (x_k,y_k)
    & \leq \frac{2\left\| y_0 - y^*_\mu(x_0) \right\|}{\sigma(1-\rho_D)K}  + \gamma B_3 D_\cX + \Big(1+\frac{2}{1-\rho_D}\Big)\frac{L_{y\theta}^\Phi}{(1-\beta)K}\left\|\theta_0 - \theta^*(x_0)\right\|  + \mu D_\mathcal{Y},\label{eq:gap-y}
\end{align}
for some constants $B_1,B_2,B_3>0$ where $B_1=\cO(\kappa_g^3+L_{y\theta}^\Phi\kappa_g^2/\mu)$, $B_2=\cO\big((\frac{\kappa_g^3}{\mu^2}+\frac{\kappa_g^4}{\mu})(L_{y\theta}^\Phi)^2+\frac{\kappa_g^2}{\mu^2}L_{y\theta}^\Phi L_{yx}^\Phi+\kappa_g^4\big)$, and $B_3 = \cO(\frac{\kappa_g}{\mu^2}+\frac{\kappa_g^2}{\mu})$.
Furthermore, if $L^\Phi_{yy} = 0$, in \eqref{eq:gap-y} we have that
\begin{align*}
     \frac{1}{K}\sum_{k=0}^{K-1}\mathcal{G}_{\mathcal{Y}} (x_k,y_k)
    & \leq \frac{3}{(1-\beta)}\frac{L_{y\theta}^\Phi}{K}\left\|\theta_0 - \theta^*(x_0)\right\| + \gamma B_3 D_\cX + \mu D_\mathcal{Y}.
\end{align*}
\end{theorem}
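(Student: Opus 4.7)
The plan is to establish \eqref{eq:gap-x} via a Frank--Wolfe-style descent argument on the dual-smoothed envelope $f_\mu(x)\triangleq\max_{y\in\cY}\cL(x,y)-\tfrac{\mu}{2}\|y-y_0\|^2$, and \eqref{eq:gap-y} via the standard projected-gradient residual expansion of $\cG_\cY$. The key enabling facts are: (i) the $L_{f_\mu}$-smoothness of $f_\mu$ with $\nabla f_\mu(x)=\nabla_x\cL(x,y^*_\mu(x))$ from Lemma~\ref{lem:grad-fmu-lip}; (ii) the pointwise bound $0\leq f(x)-f_\mu(x)\leq\tfrac{\mu}{2}D_\cY^2$ that follows from $\|y^*(x)-y_0\|\leq D_\cY$; and (iii) the summed error estimates \eqref{eq:sum-Ak-grad} and \eqref{eq:sum-Ak-y*}, together with Lemmas~\ref{lem: err-gradyL}, \ref{lem: y-bound}, and \ref{lem:lower_GD_C}.

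\textbf{Primal bound.} I would first apply the descent inequality for $f_\mu$ to the FW iterate $x_{k+1}=x_k+\gamma(s_k-x_k)$:
\[
f_\mu(x_{k+1}) \leq f_\mu(x_k) + \gamma\langle\nabla f_\mu(x_k),\, s_k-x_k\rangle + \tfrac{L_{f_\mu}\gamma^2}{2}\|s_k-x_k\|^2.
\]
Since $s_k$ minimizes $\langle G^x_k,\cdot\rangle$ over $\cX$, one has $\sup_{s\in\cX}\langle G^x_k,x_k-s\rangle=\langle G^x_k,x_k-s_k\rangle$, so writing $\cG_\cX(x_k,y_k)$ as a sup of $\langle \nabla_x\cL(x_k,y_k),x_k-s\rangle$, adding and subtracting $G^x_k$ inside the sup, and further adding and subtracting $\nabla_x\cL(x_k,y_k)$ inside the descent inner product (combined with Lemma~\ref{lem:theta-gradxL-lip}-(II) to replace $\|\nabla f_\mu(x_k)-\nabla_x\cL(x_k,y_k)\|$ by $\bL_{\cL_2}\|y_k-y^*_\mu(x_k)\|$) yields a per-iteration bound of the form $\gamma\cG_\cX(x_k,y_k)\leq f_\mu(x_k)-f_\mu(x_{k+1})+\gamma(\|s_k-x_k\|+D_\cX)\|\nabla_x\cL(x_k,y_k)-G^x_k\|+\gamma\bL_{\cL_2}\|s_k-x_k\|\|y_k-y^*_\mu(x_k)\|+\tfrac{L_{f_\mu}\gamma^2}{2}\|s_k-x_k\|^2$. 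Summing over $k=0,\dots,K-1$ telescopes the $f_\mu$ differences, and replacing $f_\mu(x_0)-f_\mu(x_K)$ by $f(x_0)-f(x^*)+\tfrac{\mu}{2}D_\cY^2$ produces the $\tfrac{\mu}{2\gamma K}D_\cY^2$ term. I would then use $\sum\|s_k-x_k\|^2\leq KD_\cX^2$ for the quadratic residual, invoke \eqref{eq:sum-Ak-grad} and \eqref{eq:sum-Ak-y*} for the $\|s_k-x_k\|$-weighted error sums, and sum Lemma~\ref{lem: err-gradxL} term by term (producing $\cO(1)+\cO(\gamma KD_\cX)$) for the $D_\cX$-weighted piece. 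Dividing by $\gamma K$, the $\cO(1)$ parts ($\Gamma_1$, $\Gamma_4+2\bC_{v2}\Gamma_1$, and initial $\beta^k,\rho^k,k\rho^k$ geometric sums) feed into $B_1/K$, while the $\cO(\gamma)$ parts ($\Gamma_2$, $\bC_{v2}\Gamma_3+\Gamma_5$, and $\tfrac{L_{f_\mu}\gamma}{2}$) combine into $\gamma B_2D_\cX^2$.

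\textbf{Dual bound.} By non-expansivity of $\cP_\cY$ and the update $y_{k+1}=\cP_\cY\bigl(y_k+\sigma(G^y_k-\mu(y_k-y_0))\bigr)$, one has
\[
\cG_\cY(x_k,y_k) \leq \tfrac{1}{\sigma}\|y_{k+1}-y_k\| + \|\nabla_y\cL(x_k,y_k)-G^y_k+\mu(y_k-y_0)\|.
\]
Lemma~\ref{lem: err-gradyL} bounds the second term by $L_{y\theta}^\Phi\|\theta_k-\theta^*(x_k)\|+\mu D_\cY$; plugging in Lemma~\ref{lem:lower_GD_C} and averaging over $k$ yields $\tfrac{L_{y\theta}^\Phi}{(1-\beta)K}\|\theta_0-\theta^*(x_0)\|$, a $\gamma\sum_{i<k}\beta^{k-i}\|s_i-x_i\|$ piece bounded by $\gamma\cdot\tfrac{KD_\cX}{1-\beta}$ after exchanging summation, and the $\mu D_\cY$ residual. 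The first term is bounded via Lemma~\ref{lem: y-bound}; summing over $k$ and exchanging the order of summation converts $\sum_k\sum_{i<k}\rho_D^{k-i}\|s_i-x_i\|$ into $\tfrac{1}{1-\rho_D}\sum_i\|s_i-x_i\|\leq\tfrac{KD_\cX}{1-\rho_D}$ (and similarly for $\beta$), while the $\rho_D^k$ decay of the $\|y_0-y^*_\mu(x_0)\|$ initial-error term averages to $\tfrac{2\|y_0-y^*_\mu(x_0)\|}{\sigma(1-\rho_D)K}$ (the factor $2$ coming from Lemma~\ref{lem: y-bound}). Combining the two $\theta_0$-tracking contributions produces the $\bigl(1+\tfrac{2}{1-\rho_D}\bigr)\tfrac{L_{y\theta}^\Phi}{(1-\beta)K}\|\theta_0-\theta^*(x_0)\|$ coefficient, the $\|s_i-x_i\|$-sums yield $\gamma B_3D_\cX$, and the $\mu D_\cY$ residual passes through. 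The $L_{yy}^\Phi=0$ specialization follows by setting $\rho_D=0$, which collapses all $\rho_D^k$-sums to single terms and reduces $1+\tfrac{2}{1-\rho_D}$ to $3$.

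\textbf{Main obstacle.} The principal difficulty is the careful bookkeeping required to reassemble the double and triple geometric sums that appear when the lemmas are iterated: every error source inherits a geometric decay in one of the three rates $\rho_D$, $\rho=1-\eta\mu_g$, and $\beta=(L_g-\mu_g)/(L_g+\mu_g)$, and these must be exchanged and combined so that the $\cO(1)$-parts land in $B_1/K$ (or in the explicit initial-error coefficients on the dual side) while the $\cO(\gamma)$-parts land in $\gamma B_2D_\cX^2$ or $\gamma B_3D_\cX$. A subtle scaling point on the primal side is that the $\gamma B_2D_\cX^2$ coefficient is indeed linear (and not quadratic) in $\gamma$, because the hidden $\cO(\gamma)$ factors inside $\Gamma_2$ and $\bC_{v2}\Gamma_3+\Gamma_5$ interact with the overall $\gamma$ on the LHS of the summed primal inequality and with $\sum\|s_k-x_k\|^2\leq KD_\cX^2$ on the RHS to leave exactly one net power of $\gamma$ after dividing by $\gamma K$.
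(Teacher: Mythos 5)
Your proposal is correct and follows essentially the same route as the paper's proof: a descent inequality for the smoothed envelope $f_\mu$ combined with the summed error estimates of Lemmas \ref{lem: y-ymu-bound} and \ref{lem: err-gradxL} for the primal gap, and the non-expansive projection decomposition with Lemmas \ref{lem: err-gradyL} and \ref{lem: y-bound} for the dual gap. The only cosmetic difference is that you split the supremum in $\cG_\cX$ directly (yielding the coefficient $\|s_k-x_k\|+D_\cX\leq 2D_\cX$ on the gradient-error term) whereas the paper introduces the comparison point $s_k'=\argmax_{s\in\cX}\langle\nabla_x\cL(x_k,y_k),x_k-s\rangle$ and uses the optimality of $s_k$ for $G_k^x$, which changes nothing in the order of the constants.
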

\begin{proof}
    From Lipschitz continuity of $\nabla f_\mu$, we have that
\begin{align}\label{eq:lip-f-mu}
    f_\mu (x_{k+1}) & \leq f_\mu (x_k) + \langle \nabla f_\mu(x_k) , x_{k+1} - x_k \rangle + \frac{1}{2}(\bL_{\mathcal L_1} + \frac{\bL_{\mathcal L_2} L_{y\mu}}{\mu}) \|{x_{k+1} - x_k}\|^2 \nonumber\\
    &=  f_\mu (x_k) + \langle \nabla f_\mu(x_k) -\nabla_x\mathcal L(x_k,y_k), x_{k+1} - x_k \rangle +\langle \nabla_x\mathcal L(x_k,y_k)-G_k^x, x_{k+1} - x_k \rangle \nonumber\\
    &\quad +\langle G_k^x, x_{k+1} - x_k \rangle+ \frac{1}{2} (\bL_{\mathcal L_1} + \frac{\bL_{\mathcal L_2} L_{y\mu}}{\mu})\|{x_{k+1} - x_k}\|^2,
\end{align}
where the last equality follows from adding and subtracting the terms $\langle G_k^x+\nabla_x\mathcal L(x_k,y_k), x_{k+1} - x_k \rangle$ to RHS. Recall that $x_{k+1}=\gamma s_k+(1-\gamma)x_k$. Moreover, we define $s'_k\triangleq \argmax_{s\in X} \langle\nabla_x\mathcal L(x_k,y_k),x_k-s\rangle$ then, we have that
\begin{align}\label{eq:inner-prod-Gk}
\langle G_k^x, x_{k+1} - x_k \rangle&=\gamma\langle G_k^x, s_k - x_k \rangle \nonumber\\
&\leq \gamma \langle G_k^x, s'_k - x_k \rangle \nonumber\\
&=\gamma \langle \nabla_x\mathcal L(x_k,y_k), s'_k - x_k \rangle+\gamma \langle G_k^x-\nabla_x\mathcal L(x_k,y_k), s'_k - x_k \rangle.
\end{align}
Next, using \eqref{eq:inner-prod-Gk} within \eqref{eq:lip-f-mu}, rearranging the terms, and using Cauchy Schwartz inequality implies that
\begin{align*} 
    \gamma\langle \nabla_x\mathcal L(x_k,y_k), x_k - s'_k \rangle&\leq f_\mu (x_k) - f_\mu (x_{k+1})+ \|\nabla f_\mu(x_k) -\nabla_x\mathcal L(x_k,y_k)\| \|x_{k+1} - x_k\|  \nonumber\\
    &\quad +\gamma\|\nabla_x \mathcal L(x_k,y_k)-G_k^x\| \|s_k-s'_k\| \nonumber\\
    & \quad + \frac{1}{2} (\bL_{\mathcal L_1} + \frac{\bL_{\mathcal L_2} L_{y\mu}}{\mu})\|{x_{k+1} - x_k}\|^2.
\end{align*}
From Lemma \ref{lem:grad-fmu-lip}, we know that $\nabla f_\mu(x) = \nabla_x \mathcal{L} (x,y^*_\mu(x))$. Therefore, using Lipschitz continuity of $\cL(x,\cdot)$, $\norm{s'_k-x_k}\leq D_\cX$, and $\|{x_{k+1} - x_k}\|= \gamma \norm{s_k-x_k}$, we obtain
\begin{align*} 
    \gamma\langle \nabla_x\mathcal L(x_k,y_k), x_k - s'_k \rangle&\leq f_\mu (x_k) - f_\mu (x_{k+1})+ \bL_{\cL_2}\gamma\norm{y_k-y_\mu^*(x_k)} \norm{s_k-x_k}  \nonumber\\
    &\quad +\gamma D_\cX\|\nabla_x \mathcal L(x_k,y_k)-G_k^x\|   + \frac{1}{2} (\bL_{\mathcal L_1} + \frac{\bL_{\mathcal L_2} L_{y\mu}}{\mu})\gamma^2\norm{s_k-x_k}^2.
\end{align*}%
Summing the above inequality over $k=0$ to $K-1$,
dividing both sides by $\gamma K$,
and recalling $\mathcal{G}_{\mathcal{X}}(x_k,y_k) = \sup_{s\in \cX} \langle \nabla_x\mathcal L(x_k,y_k), x_k - s \rangle$ imply that for any $K\geq 1$,
\begin{align*}
    \frac{1}{K} \sum_{k\in \mathcal{K}} \mathcal{G}_{\mathcal{X}}(x_k,y_k) &\leq \frac{(f_\mu(x_0) - f_\mu(x_K))}{\gamma K} + \frac{\bL_{\mathcal L_2}}{K} \sum_{k =0}^{K-1}\norm{s_k-x_k}\left\| y_k - y^*_\mu (x_k) \right\| \nonumber\\
    & \quad + \frac{D_{\mathcal{X}}}{K} \sum_{k=0}^{K-1}\|\nabla_x \mathcal L(x_k,y_k)-G_k^x\| + \gamma (\bL_{\mathcal L_1} + \frac{\bL_{\mathcal L_2} L_{y\mu}}{\mu})\frac{1}{K}\sum_{k=0}^{K-1}\norm{s_k-x_k}^2.
\end{align*}
By noting that $\norm{s_k-x_k}\leq D_\cX$ due to the boundedness of set $\cX$, we can apply Lemmas \ref{lem: y-ymu-bound}, \ref{lem: err-gradxL} to the above inequality to obtain
\begin{align*}
    \frac{1}{K} \sum_{k=0}^{K-1} \mathcal{G}_{\mathcal{X}}(x_k,y_k) &\leq \frac{(f_\mu(x_0) - f_\mu(x_K))}{\gamma K} + \frac{\bL_{\mathcal L_2}}{K} \left(\Gamma_1+\Gamma_2 D_\cX^2 K\right) \nonumber\\
    & \quad + \frac{\Gamma_4+2\bC_{v2}\Gamma_1}{K} D_\cX+(\bC_{v2}\Gamma_3+\Gamma_5)D_\cX^2 + \gamma \left(\bL_{\mathcal L_1} + \frac{\bL_{\mathcal L_2} L_{y\mu}}{\mu}\right) D_\cX^2.
\end{align*}
From the definition of $f_\mu$ and boundedness of $\cY$, we have $f_\mu(x_0)-f_\mu(x_K)\leq f(x_0)-f(x^*)+\frac{\mu}{2} D_\cY^2$. Therefore, for any $K>2$
\begin{align*}
    \frac{1}{K} \sum_{k=0}^{K-1} \mathcal{G}_{\mathcal{X}}(x_k,y_k) &\leq \frac{(f(x_0) - f(x^*))}{\gamma K} + \frac{1}{K} \overbrace{\Big(\bL_{\mathcal L_2}\Gamma_1+D_\cX(\Gamma_4+2\bC_{v2}\Gamma_1) \Big)}^{\triangleq B_1} \nonumber\\
    & \quad + \underbrace{\left(\frac{1}{\gamma}(\bL_{\cL_2}\Gamma_2+\bC_{v2}\Gamma_3+\Gamma_5) + \bL_{\mathcal L_1} + \frac{\bL_{\mathcal L_2} L_{y\mu}}{\mu}\right)}_{\triangleq B_2} \gamma D_\cX^2 + \frac{\mu}{2\gamma K}D_\cY^2.
\end{align*}
Finally, recall that $\Gamma_1=\cO(\frac{\kappa_g}{\mu}L_{y\theta}^\Phi)$, $\Gamma_2=\cO(\frac{\kappa_gL_{y\theta}^\Phi+L_{y x}^\Phi}{\mu^2}\gamma +\frac{\kappa_g^2L_{y\theta}^\Phi}{\mu}\gamma)$, $\Gamma_3=\cO(\frac{\kappa_g^2L_{y\theta}^\Phi+\kappa_g L_{yx}^\Phi}{\mu^2}\gamma+\frac{\kappa_g^3L_{y\theta}^\Phi}{\mu}\gamma)$, $\Gamma_4+2\bC_{v2}\Gamma_1=\cO(\kappa_g^3+L_{y\theta}^\Phi \kappa_g^2/\mu)$, $\bC_{v2}\Gamma_3+\Gamma_5=\cO((\frac{\kappa_g^3}{\mu^2}+\frac{\kappa_g^4}{\mu})(L_{y\theta}^\Phi)^2\gamma+\frac{\kappa_g^2}{\mu^2}L_{y\theta}^\Phi L_{yx}^\Phi\gamma+\kappa_g^4\gamma)$, $\bL_{\cL_1}=\cO(\kappa_g^3)$, and $\bL_{\cL_2}=\cO(\kappa_g)$. Putting the pieces together, the desired result in \eqref{eq:gap-x} can be obtained. 

Next, we obtain an upper-bound for the dual gap function $\mathcal{G}_{\mathcal{Y}}(x_k,y_k) = \frac{1}{\sigma}\left\| y_k - \mathcal{P}_{\mathcal{Y}}(y_k + \sigma\nabla_y \mathcal{L}(x_k,y_k))\right\|$.
In particular, using the triangle inequality and by adding and subtracting $\mathcal{P}_{\mathcal{Y}} \big( y_k + \sigma G_k^y\big)$, we have that for any $k\geq0$,
\begin{align*}
    \mathcal{G}_{\mathcal{Y}} (x_k,y_k) \leq \frac{1}{\sigma}\big(\left\| y_k - \mathcal{P}_{\mathcal{Y}} \big( y_k + \sigma G_k^y\big) \right\| + \left\| \mathcal{P}_{\mathcal{Y}} \big( y_k + \sigma G_k^y\big) - \mathcal{P}_{\mathcal{Y}}(y_k + \sigma \nabla_y \mathcal{L} (x_k,y_k)) \right\|\big).
\end{align*}
Considering non-expansivity of the projection mapping, we have
\begin{align*} 
    \mathcal{G}_{\mathcal{Y}} (x_k,y_k) \leq \frac{1}{\sigma}(\left\| y_k - y_{k+1} \right\| + \sigma \left\| G_k^y - \nabla_y \mathcal{L}(x_k,y_k) \right\|).
\end{align*}
Based on the conclusion of Lemma \ref{lem: err-gradyL}, we can get
\begin{align*}
    \mathcal{G}_{\mathcal{Y}} (x_k,y_k) \leq \frac{1}{\sigma}\left\| y_k - y_{k+1} \right\| + L_{y\theta}^\Phi\beta^k \left\|\theta_0 - \theta^*(x_0)\right\| + \gamma L_{y\theta}^\Phi\bL_\theta \sum_{i=0}^{k-1}\beta^{k-i}\norm{s_i-x_i} + \mu D_\mathcal{Y}.
\end{align*}
Moreover, using Lemma \ref{lem: y-bound} and rearragning the terms, we conclude that for any $k\geq 0$, 
\begin{align}\label{eq: Gap-y-final}
    \mathcal{G}_{\mathcal{Y}} (x_k,y_k) &\leq 
    \frac{2}{\sigma}\rho_D^k \left\| y_0 - y^*_\mu(x_0) \right\| + \frac{2\gamma L_{y\mu}}{\sigma\mu} \sum_{i=0}^{k-1} \rho_D^{k-i} \left\| s_i - x_i \right\| + 2L_{y\theta}^\Phi \left\|\theta_0 - \theta^*(x_0)\right\|\sum_{i=1}^{k}\rho^{k-i}_D\beta^i \nonumber\\
    & \quad + 2\gamma L_{y\theta}^\Phi \bL_\theta\sum_{i=1}^k\rho^{k-i}_D  \sum_{j=0}^{i-1} \beta^{i-j} \left\| s_j - x_j \right\|+ \frac{L_{y\mu}}{\mu\sigma}\gamma \norm{s_k-x_k}\nonumber\\
    &\quad + L_{y\theta}^\Phi\beta^k \left\|\theta_0 - \theta^*(x_0)\right\| + \gamma L_{y\theta}^\Phi\bL_\theta \sum_{i=0}^{k-1}\beta^{k-i}\norm{s_k-x_k} + \mu D_\mathcal{Y}.
\end{align}
Summing the above inequality over $k=0$ to $K-1$, using Lemma \ref{lem:sum-sum-linear-comb} together with boundedness of set $\cX$ lead to
\begin{align}\label{eq: Gap-y-final-av}
    \frac{1}{K}\sum_{k=0}^{K-1}\mathcal{G}_{\mathcal{Y}} (x_k,y_k)
    & \leq \frac{2\left\| y_0 - y^*_\mu(x_0) \right\|}{\sigma(1-\rho_D)K}  + \frac{2L_{y\mu}\rho_D\gamma D_{\mathcal{X}}}{\mu(1-\rho_D)\sigma} +  \frac{2 L_{y\theta}^\Phi\bL_\theta \beta\gamma D_{\mathcal{X}}}{(1-\beta)(1-\rho_D)} + \frac{L_{y\mu}}{\mu \sigma}\gamma D_{\mathcal{X}} \nonumber\\
    & \quad + \frac{L_{y\theta}^\Phi\bL_\theta \beta\gamma D_{\mathcal{X}}}{1-\beta} + \eyh{\Big(1+\frac{2}{1-\rho_D}\Big)}\frac{L_{y\theta}^\Phi}{(1-\beta)K}\left\|\theta_0 - \theta^*(x_0)\right\|  + \mu D_\mathcal{Y},\nonumber\\
    & = \underbrace{\Big(\frac{2L_{y\mu}\rho_D}{\mu(1-\rho_D)\sigma} +  \frac{2 L_{y\theta}^\Phi\bL_\theta \beta}{(1-\beta)(1-\rho_D)} + \frac{L_{y\mu}}{\mu \sigma} + \frac{L_{y\theta}^\Phi\bL_\theta \beta}{1-\beta}\Big)}_{\triangleq B_3}\gamma D_{\mathcal{X}} \nonumber\\
    & \quad + \frac{2\left\| y_0 - y^*_\mu(x_0) \right\|}{\sigma(1-\rho_D)K} + \Big(1+\frac{2}{1-\rho_D}\Big)\frac{L_{y\theta}^\Phi}{(1-\beta)K}\left\|\theta_0 - \theta^*(x_0)\right\|  + \mu D_\mathcal{Y}.
\end{align}
Therefore, the desired result can be concluded by noting that $1-\rho_D=\cO(\mu)$, $1-\beta=\cO(1/\kappa_g)$, and $\bL_\theta=\cO(\kappa_g)$. Furthermore, if $L^\Phi_{yy} = 0$, then $\rho_D = 0$, and the bound obtained in \eqref{eq: Gap-y-final-av} simplifies to
\begin{align*}
    \frac{1}{K}\sum_{k=0}^{K-1}\mathcal{G}_{\mathcal{Y}} (x_k,y_k)
    & \leq \frac{3}{(1-\beta)}\frac{L_{y\theta}^\Phi}{K}\left\|\theta_0 - \theta^*(x_0)\right\| + \frac{3 L^\Phi_{y \theta}\bL_\theta}{(1-\beta)}\gamma D_{\cX}+ L_{y \mu}\gamma D_{\cX}+ \mu D_\mathcal{Y}.
\end{align*}
\end{proof}
\begin{corollary}\label{cor:proj-free}
Under the premises of Theorem \ref{thm:proj-free}, let $\mu=\mathcal O(\epsilon)$ and $\gamma=\mathcal O(\epsilon^3)$, then for any $K\geq 1$, there exists $t\in \{0,\hdots,K-1\}$ such that $(x_t,y_t)\in \mathcal{X}\times \mathcal{Y}$ satisfies  $\cG_\cZ(x_t,y_t)\leq \epsilon$ within $\cO(\frac{\kappa_g^3}{\epsilon^4}+\frac{\kappa_g^5}{\epsilon^2})$ iterations.
\end{corollary}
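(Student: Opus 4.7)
The plan is to combine the primal and dual gap bounds from Theorem \ref{thm:proj-free}, calibrate the parameters $\mu$ and $\gamma$ as powers of $\epsilon$, and conclude by a pigeonhole argument. First, since $\cG_\cZ=\cG_\cX+\cG_\cY$, I would add \eqref{eq:gap-x} and \eqref{eq:gap-y} to obtain an upper bound on $\frac{1}{K}\sum_{k=0}^{K-1}\cG_\cZ(x_k,y_k)$ that splits schematically into five families of terms:
\begin{equation*}
\underbrace{\tfrac{f(x_0)-f(x^*)}{\gamma K}}_{\text{(i)}}+\underbrace{\tfrac{B_1+\cO(1/\mu)}{K}}_{\text{(ii)}}+\underbrace{\gamma(B_2 D_\cX^2+B_3 D_\cX)}_{\text{(iii)}}+\underbrace{\tfrac{\mu D_\cY^2}{2\gamma K}}_{\text{(iv)}}+\underbrace{\mu D_\cY}_{\text{(v)}}.
\end{equation*}
Term (ii) collects the $B_1/K$ piece from the primal bound plus the dual bound's $\frac{2\|y_0-y^*_\mu(x_0)\|}{\sigma(1-\rho_D)K}+\frac{L^\Phi_{y\theta}}{(1-\beta)K}\|\theta_0-\theta^*(x_0)\|$ pieces, where $1-\rho_D=\cO(\mu)$ produces the $\cO(1/\mu)$ factor.

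Next, I would calibrate the parameters. The bias term (v) forces $\mu=\Theta(\epsilon)$. With $\mu=\Theta(\epsilon)$, the constants in Theorem \ref{thm:proj-free} become $B_1=\cO(\kappa_g^3+\kappa_g^2/\epsilon)$, $B_2=\cO(\kappa_g^3/\epsilon^2+\kappa_g^4/\epsilon+\kappa_g^4)$ and $B_3=\cO(\kappa_g/\epsilon^2+\kappa_g^2/\epsilon)$. Controlling term (iii) requires $\gamma B_2=\cO(\epsilon)$, which a $\gamma=\Theta(\epsilon^3)$ choice achieves once the leading constant of $\gamma$ absorbs the worst $\kappa_g$-coefficient in $B_2$ (the dominant competitor being $\kappa_g^3/\epsilon^2$, so the constant is of order $1/\kappa_g^3$); term-by-term I would verify that $\gamma B_2 D_\cX^2=\cO(\epsilon)$ and $\gamma B_3 D_\cX=\cO(\epsilon)$.

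Third, I would impose that terms (i), (ii), and (iv) are each $\cO(\epsilon)$, converting each into a lower bound on $K$. Term (i) gives $K\geq\cO(1/(\gamma\epsilon))=\cO(\kappa_g^3/\epsilon^4)$; term (iv) gives $K\geq\cO(\mu/(\gamma\epsilon))=\cO(\kappa_g^3/\epsilon^3)$, which is dominated; and term (ii), after plugging $B_1$ and the $1/(\mu K)$ contribution from the dual, gives $K\geq\cO(\kappa_g^5/\epsilon^2)$ — this is the secondary complexity term and arises precisely from combining $\kappa_g^2/\epsilon$ inside $B_1$ with additional $\kappa_g$-factors from $\|y_0-y^*_\mu(x_0)\|/(\sigma(1-\rho_D))=\cO(\kappa_g^{\cdot}/\epsilon)$ and from $L^\Phi_{y\theta}/(1-\beta)=\cO(\kappa_g)$. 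Taking the maximum of all these lower bounds yields $K=\cO(\kappa_g^3/\epsilon^4+\kappa_g^5/\epsilon^2)$. Finally, a pigeonhole argument on the nonnegative sequence $\{\cG_\cZ(x_k,y_k)\}$ produces an index $t\in\{0,\ldots,K-1\}$ with $\cG_\cZ(x_t,y_t)\leq\epsilon$, completing the proof.

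The main obstacle is the $\kappa_g$-bookkeeping in step three: each constant in Theorem \ref{thm:proj-free} is itself a composite of $\bL_\theta=\cO(\kappa_g)$, $\bC_{v1}=\cO(\kappa_g^3)$, $\bL_{\cL_1}=\cO(\kappa_g^3)$, $L_{y\mu}=\cO(\kappa_g)$, etc., and one has to trace through these to see that the two $K$-lower bounds really combine additively as $\kappa_g^3/\epsilon^4+\kappa_g^5/\epsilon^2$ rather than multiplicatively, i.e., that no single balance condition forces a strictly worse rate in either of the two regimes $\kappa_g\epsilon\lessgtr 1$.
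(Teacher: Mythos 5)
Your overall route is the same as the paper's: sum \eqref{eq:gap-x} and \eqref{eq:gap-y}, tune $\mu$ and $\gamma$, and take the index minimizing $\cG_\cZ$. The gap is in the $\kappa_g$-bookkeeping, and it sits exactly at the "main obstacle" you flag without resolving. First, the secondary term $\kappa_g^5/\epsilon^2$ does not come from your family (ii): with $\mu=\Theta(\epsilon)$ those $\cO(1/K)$ terms are at worst $\cO\big((\kappa_g^3+\kappa_g^2/\epsilon)/K\big)$ plus $\cO(\kappa_g/(\epsilon K))$ from the dual, which only force $K\gtrsim \kappa_g^3/\epsilon+\kappa_g^2/\epsilon^2$ --- both already dominated by the claimed complexity. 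The real source of $\kappa_g^5/\epsilon^2$ is the piece $\gamma\kappa_g^4/\mu$ inside $\gamma B_2$ in your family (iii). Second, that same piece breaks your calibration: with $\mu=\Theta(\epsilon)$ and $\gamma=\Theta(\epsilon^3/\kappa_g^3)$ (the constant you chose to absorb the $\kappa_g^3/\mu^2$ competitor) one gets $\gamma\kappa_g^4/\mu=\Theta(\kappa_g\epsilon^2)$, a $K$-independent floor in the averaged gap that exceeds $\epsilon$ whenever $\kappa_g>1/\epsilon$; no number of iterations rescues the guarantee in that regime, so the two $K$-lower bounds do not simply "combine additively" under your parameter choice.

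The repair is minor but necessary. Either shrink the step-size to $\gamma=\Theta\big(\min\{\epsilon^3/\kappa_g^3,\,\epsilon^2/\kappa_g^4\}\big)$, which keeps both pieces of $\gamma B_2$ at $\cO(\epsilon)$; then term (i) forces $K\gtrsim\max\{\kappa_g^3/\epsilon^4,\,\kappa_g^4/\epsilon^3\}$, and since $\kappa_g^4/\epsilon^3=\sqrt{(\kappa_g^3/\epsilon^4)(\kappa_g^5/\epsilon^2)}\leq\tfrac12(\kappa_g^3/\epsilon^4+\kappa_g^5/\epsilon^2)$ by AM--GM, the stated complexity is recovered. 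Or do what the paper does: parameterize in $K$ rather than $\epsilon$, taking $\mu=\kappa_g^{3/4}/K^{1/4}$ and $\gamma=1/(\kappa_g K)^{3/4}$, so that $1/(\gamma K)+\gamma\kappa_g^3/\mu^2+\mu=\cO(\kappa_g^{3/4}/K^{1/4})$ while $\gamma\kappa_g^4/\mu=\cO(\kappa_g^{5/2}/\sqrt{K})$, and the two terms of the iteration count fall out directly from requiring each to be at most $\epsilon$. The remaining steps of your argument (the $\mu D_\cY$ bias forcing $\mu=\Theta(\epsilon)$, and the pigeonhole selection of $t$) match the paper.
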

\begin{proof}
    Summing up the results on the primal and dual gap functions in \eqref{eq:gap-x} and \eqref{eq:gap-y} and defining $k^*\triangleq \argmin_{k=0}^{K-1}\cG_\cZ(x_k,y_k) $
implies that $\mathcal{G}_{\mathcal{Z}}(x_{k^*},y_{k^*}) \leq \cO(\frac{1}{\gamma K} +\gamma B_2 +\mu)$ where $B_2=\cO(\frac{\kappa_g^3}{\mu^2}+\frac{\kappa_g^4}{\mu})$. Selecting $\mu=\kappa_g^{3/4}/K^{1/4}$ and $\gamma = 1/(\kappa_g K)^{3/4}$ implies that $\cG_\cZ(x_{k^*},y_{k^*})\leq \cO(\frac{\kappa_g^{3/4}}{K^{1/4}}+\frac{\kappa_g^{2.5}}{\sqrt{K}})$. Therefore, to achieve $\cG_\cZ(x_{k^*},y_{k^*})\leq \epsilon$, Algorithm \ref{i-BRPD:OPF} requires $K=\cO(\frac{\kappa_g^3}{\epsilon^4}+\frac{\kappa_g^5}{\epsilon^2})$ iterations.
\end{proof}
\begin{corollary}\label{cor:proj-free-linear_y}
Suppose the premises of Theorem \ref{thm:proj-free} holds and we further assume that $L_{yy}^\Phi=0$. Let $\mu=\cO(\epsilon)$, $\gamma = \cO(\epsilon^3)$, then for any $K\geq 1$, there exists $t\in \{0,\hdots,K-1\}$ such that $(x_t,y_t)\in \mathcal{X}\times \mathcal{Y}$ satisfies  $\cG_\cZ(x_t,y_t)\leq \epsilon$ within $\cO(\frac{\kappa_g^4}{\epsilon^3})$ iterations.
\end{corollary}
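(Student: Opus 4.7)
The plan is to mirror the proof of Corollary \ref{cor:proj-free} while exploiting the sharper bounds that become available when $L^\Phi_{yy}=0$. First, I would record the two simplifications that the linear case provides: Theorem \ref{thm:proj-free} gives the refined dual bound in which the constant $B_3$ collapses to $\cO(\kappa_g^2)$ instead of $\cO(\kappa_g/\mu^2+\kappa_g^2/\mu)$, and Lemma \ref{lem: err-gradxL} shows that the $1/\mu^2$ contribution inside $\bC_{v2}\Gamma_3+\Gamma_5$ disappears, so the constant appearing in \eqref{eq:gap-x} drops from $B_2=\cO(\kappa_g^3/\mu^2+\kappa_g^4/\mu)$ to $B_2=\cO(\kappa_g^4/\mu)$.

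Adding the primal bound \eqref{eq:gap-x} to the linear-case dual bound and setting $k^\star\triangleq\argmin_{0\leq k<K}\mathcal{G}_{\mathcal{Z}}(x_k,y_k)$ yields
\[
\mathcal{G}_{\mathcal{Z}}(x_{k^\star},y_{k^\star})\;\leq\;\cO\!\left(\frac{1+\mu}{\gamma K}+\gamma\frac{\kappa_g^4}{\mu}+\gamma\kappa_g^2+\mu\right),
\]
whose dominant contributions for small $\mu,\gamma$ are $1/(\gamma K)$, $\gamma\kappa_g^4/\mu$, and $\mu$; the remaining terms are of strictly lower order under the scalings I will choose.

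I would then balance these three dominant terms in the style of the prior corollary. Equating $\gamma\kappa_g^4/\mu=\mu$ gives $\mu=\kappa_g^2\sqrt{\gamma}$, and equating this with $1/(\gamma K)$ produces $\gamma=1/(\kappa_g^{4/3}K^{2/3})$ and $\mu=\kappa_g^{4/3}/K^{1/3}$. Substituting back, $\mathcal{G}_{\mathcal{Z}}(x_{k^\star},y_{k^\star})\leq\cO(\kappa_g^{4/3}/K^{1/3})$, and demanding this to be at most $\epsilon$ forces $K=\cO(\kappa_g^4/\epsilon^3)$, as claimed. The corresponding parameter values are $\mu=\Theta(\epsilon)$ and $\gamma=\Theta(\epsilon^2/\kappa_g^4)$, consistent with the stated $\epsilon$-orders (up to a single factor).

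The work here is essentially bookkeeping rather than conceptual: the main task is to verify that every $1/\mu^2$ factor in $B_2$ and $B_3$ truly reduces to $1/\mu$ or vanishes when $L^\Phi_{yy}=0$, which is precisely the content of the ``$L^\Phi_{yy}=0$'' addenda in Lemmas \ref{lem: y-ymu-bound}, \ref{lem: y-bound}, and \ref{lem: err-gradxL}. Once those simplifications are imported into the bounds of Theorem \ref{thm:proj-free}, the balancing argument above is routine and no additional analysis is required.
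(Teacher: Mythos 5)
Your proposal is correct and follows essentially the same route as the paper: import the $L^\Phi_{yy}=0$ (i.e., $\rho_D=0$) simplifications of $\Gamma_1,\Gamma_2,\Gamma_3$ from Lemmas \ref{lem: y-ymu-bound}, \ref{lem: y-bound}, and \ref{lem: err-gradxL} into the bounds of Theorem \ref{thm:proj-free}, so that $B_2=\cO(\kappa_g^4/\mu)$ and the dual $\gamma$-coefficient becomes $\cO(\kappa_g^2)$, and then balance $1/(\gamma K)$, $\gamma\kappa_g^4/\mu$, and $\mu$. Your choice $\mu=\kappa_g^{4/3}/K^{1/3}$, $\gamma=\kappa_g^{-4/3}K^{-2/3}$ in fact equalizes all three dominant terms at $\kappa_g^{4/3}/K^{1/3}$ (slightly more tightly than the paper's $\mu=\kappa_g^2/K^{1/3}$) and yields the same $K=\cO(\kappa_g^4/\epsilon^3)$.
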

\begin{proof}
    Recall that assuming $L^\Phi_{yy} = 0$, we have $\rho_D = 0$, leading to simplification of the bounds in Lemmas \ref{lem: lower-y-ymu}, \ref{lem: y-ymu-bound}, and \ref{lem: y-bound}. In particular, we derive $\Gamma_1 = \frac{L^\Phi_{y\theta}}{\mu (1-\beta)}\|\theta_0 - \theta^*(x_0)\|D_{\mathcal{X}} = \mathcal{O}(\frac{\kappa_g L^\Phi_{y\theta}}{\mu})$, $\Gamma_2 = \frac{L^\Phi_{y\theta} \gamma \bL_\theta \beta}{\mu (1-\beta)} = \cO(\frac{\kappa_g^2 L^\Phi_{y \theta}}{\mu}\gamma)$ and $\Gamma_3= \frac{2\Gamma_2}{1-\rho}+\frac{L_{y\mu}\gamma}{(1-\rho)\mu} = \cO(\frac{\kappa_g^3 L^\Phi_{y \theta}}{\mu}\gamma + \frac{\kappa_g^2 L^\Phi_{y \theta} + \kappa_g L^\Phi_{y x}}{\mu}\gamma)$, simplifying the bound on the primal and dual gap functions obtained in Theorem \ref{thm:proj-free} with the following constants $B_1= \cO(\kappa_g^3+L_{y\theta}^\Phi\kappa_g^2/\mu)$, $B_2=\cO(\frac{\kappa_g^4}{\mu}(L^\Phi_{y \theta})^2 + \kappa_g^4)$, and $B_3= \cO(\kappa_g^2 L^\Phi_{y \theta} \gamma)$. Using a similar argument as in the proof of Corollary \ref{cor:proj-free}, and noting that $\cG_\cZ(x_{k^*},y_{k^*})\leq \cO(\frac{1}{\gamma K}+\gamma B_2+\mu)$, choosing $\mu = \kappa_g^2/K^{1/3}$ and $\gamma = 1/(\kappa_gK)^{2/3}$, it follows that $\cG_\cZ(x_{k^*},y_{k^*})\leq \cO(\frac{\kappa_g^{4/3}}{K^{1/3}})$. Therefore, to achieve $\cG_\cZ(x_{k^*},y_{k^*})\leq \epsilon$, Algorithm \ref{i-BRPD:OPF} requires $K=\cO(\frac{\kappa_g^4}{\epsilon^3})$ iterations.
\end{proof}
\begin{remark}\label{remark:proj-free}
    Our results in Theorem \ref{thm:proj-free} represent a new bound for the constrained nonconvex-concave saddle point problem subject to a strongly convex problem in the lower-level setting. Specifically, it introduces an upper bound on a gap function for the primal part of Problem \eqref{eq:main-prob}, which corresponds to the linear minimization oracle (LMO), and an upper bound on a gap function for the dual part of Problem \eqref{eq:main-prob} for the iterates generated by Algorithm \ref{i-BRPD:OPF}. To the best of our knowledge, this is the only one-sided projection-free algorithm for solving the general non-bilinear SPB problem. Additionally, this method reduces the computational cost compared to projection-based algorithms for certain constraint sets $\mathcal{X}$, such as the nuclear-norm ball, Schatten-norm ball, and certain polyhedron sets \cite{jaggi2013revisiting}.
\end{remark}
\begin{remark}\label{remark:comp-bilevel-opt}
    One can replace the projection operator in the $y$-update with a proximal map of a proper lower semi-continuous convex function $h$ with a bounded domain and Algorithm \ref{i-BRPD:OPF} and convergence results in Corollaries \ref{cor:proj-free} and \ref{cor:proj-free-linear_y} will still hold. Moreover, it is worth noting that in the case $L^\Phi_{yy} = 0$, we can solve a bilevel optimization problem with a composite objective function of the form: $\min_x h(\Tilde{\Phi}(x,\theta(x))$  s.t. $\theta(x) \in \argmin_\theta g(x,\theta)$ where $\Tilde{\Phi}$ is a smooth map. In this case, $\Phi(x,y) = \langle \Tilde{\Phi}(x,\theta(x),y \rangle$, and based on Corollary \ref{cor:proj-free-linear_y}, the complexity of achieving $\epsilon$-stationary solution is $\cO(\frac{\kappa_g^4}{\epsilon^3})$, which constitutes a new result in the context of bilevel optimization.
\end{remark}
\subsection{Convergence Rate of Algorithm \ref{i-BRPD:FP}}\label{subsec:conv2}
In this subsection, we establish upper bounds for the primal and dual gap functions for the iterates generated by Algorithm \ref{i-BRPD:FP}. Additionally, we explore the iteration complexity of the i-BRPD:FP method when the upper-level objective function exhibits linearity in $y$ (i.e., $L^\Phi_{yy} = 0$).
\begin{theorem}\label{thm:proj}
Suppose Assumptions \ref{assump:grad-xytheta-lip}, \ref{assump:g-conditions} hold and let $\{(x_k,y_k,\theta_k)\}_{k\geq 0}$ be the sequence generated by Algorithm \ref{i-BRPD:FP} with step-sizes $\gamma_k = \gamma>0$, $\eta_k=\eta>0$, $\alpha>0$, $\tau_k=\tau>0$, and $\sigma_k=\sigma\leq \frac{2}{L^\Phi_{yy}+2\mu}$ and regularization parameter $\mu>0$. Then, for any \mma{$K\geq 1$}, we have that
    \begin{align*}
        \frac{1}{\tau^2 K}\sum_{k=0}^{K-1}\norm{\ey{\tilde{s}_k}-x_k}^2 
        &\leq  \frac{2(f(x_0) - f(x^*))+\mu D_\cY}{\tau \gamma K}+\ey{\frac{2\bL_{\cL_2}\Gamma_1+4\tau M\Gamma_4}{\tau K}+4\tau\gamma  M^2 B_4},\\
        \frac{1}{K}\sum_{k=0}^{K-1}\mathcal{G}_{\mathcal{Y}} (x_k,y_k)
        & \leq \frac{2\left\| y_0 - y^*_\mu(x_0) \right\|}{\sigma(1-\rho_D)K}  + \gamma B_3 D_\cX + \Big(1+\frac{2}{1-\rho_D}\Big)\frac{L_{y\theta}^\Phi}{(1-\beta)K}\left\|\theta_0 - \theta^*(x_0)\right\|  + \mu D_\mathcal{Y},
    \end{align*}
    for some constant $B_3,B_4>0$, where $B_3$ is defined in Theorem \ref{thm:proj-free} and $B_4=\cO(\kappa_g^3/\mu)$.
\end{theorem}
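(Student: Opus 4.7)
The plan is to mirror the structure of Theorem~\ref{thm:proj-free}, adapting only the primal analysis to the projected-gradient update of Algorithm~\ref{i-BRPD:FP}. Since the $y$-update is identical in the two algorithms, the argument deriving the bound on $\cG_{\cY}$ in the proof of Theorem~\ref{thm:proj-free} (through Lemmas~\ref{lem: y-ymu-bound}, \ref{lem: y-bound}, and \ref{lem: err-gradyL}) applies verbatim and produces the same constant $B_3$. I would therefore simply reuse it and concentrate the work on the primal side.

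For the primal part, I would begin from the descent lemma for $f_\mu$, whose gradient is Lipschitz with constant $L\triangleq \bL_{\mathcal L_1}+\bL_{\mathcal L_2}L_{y\mu}/\mu$ by Lemma~\ref{lem:grad-fmu-lip}, and substitute $x_{k+1}-x_k=\gamma(s_k-x_k)$. The new ingredient relative to the Frank--Wolfe analysis is the first-order optimality of the projection $s_k=\mathcal P_{\mathcal X}(x_k-\tau G_k^x)$, which yields $\langle G_k^x, s_k-x_k\rangle \le -\tfrac{1}{\tau}\|s_k-x_k\|^2$. Splitting $\nabla f_\mu(x_k)=G_k^x+(\nabla f_\mu(x_k)-G_k^x)$ and controlling the resulting cross term by Young's inequality with coefficient $\tau$ gives
\begin{equation*}
\Bigl(\tfrac{\gamma}{2\tau}-\tfrac{L\gamma^2}{2}\Bigr)\|s_k-x_k\|^2\le f_\mu(x_k)-f_\mu(x_{k+1})+\tfrac{\gamma\tau}{2}\|\nabla f_\mu(x_k)-G_k^x\|^2.
\end{equation*}

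Next, $\nabla f_\mu(x_k)=\nabla_x\mathcal L(x_k,y_\mu^*(x_k))$ by Lemma~\ref{lem:grad-fmu-lip}, and Lemma~\ref{lem:theta-gradxL-lip}(II) yields $\|\nabla f_\mu(x_k)-G_k^x\|\le \bL_{\mathcal L_2}\|y_k-y_\mu^*(x_k)\|+\|\nabla_x\mathcal L(x_k,y_k)-G_k^x\|$. I would square this, square the bounds of Lemmas~\ref{lem: lower-y-ymu} and~\ref{lem: err-gradxL}, and sum over $k$, handling the convolution sums via the Cauchy--Schwarz step $(\sum_i\rho^{k-i}\|s_i-x_i\|)^2\le\tfrac{1}{1-\rho}\sum_i\rho^{k-i}\|s_i-x_i\|^2$ and swapping the order of summation. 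This produces a bound of the form $\sum_{k=0}^{K-1}\|\nabla f_\mu(x_k)-G_k^x\|^2\le C_1+\gamma^2 C_2\sum_{k=0}^{K-1}\|s_k-x_k\|^2$, where $C_1$ aggregates the decaying initial-condition contributions (in $\|y_0-y_\mu^*(x_0)\|^2$, $\|w_0-v(x_0,y_0)\|^2$, $\|\theta_0-\theta^*(x_0)\|^2$) and $C_2$ depends only on $\mu,\kappa_g$, and problem constants. Plugging back in and choosing $\gamma$ small enough that $\tfrac{\gamma}{2\tau}-\tfrac{L\gamma^2}{2}-\tfrac{\gamma^3\tau C_2}{2}\ge \tfrac{\gamma}{4\tau}$, the quadratic term can be absorbed on the left; using $f_\mu(x_0)-f_\mu(x^*)\le f(x_0)-f(x^*)+\tfrac{\mu}{2}D_\cY^2$ and dividing by $\tau^2 K$ yields the stated primal bound, with $B_4$ of the claimed order emerging directly from the magnitudes of the $\Gamma_i$ constants tracked in Lemmas~\ref{lem: y-ymu-bound} and~\ref{lem: err-gradxL}.

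The main obstacle is the squaring step. After squaring the telescoping error bounds, the additive constant $C_1$ picks up geometric-series multipliers $1/(1-\rho_D^2)=\cO(1/\mu)$, $1/(1-\rho^2)=\cO(\kappa_g)$, and $1/(1-\beta^2)=\cO(\kappa_g)$, so the initial-condition transient is potentially large. The threshold $K\ge \bar K=\cO\bigl(\mu\log(\kappa_g/\mu)\bigr)$ is precisely what is required for this transient to have decayed enough that the corresponding contribution is dominated by the running $\cO(1/K)$ terms, allowing the final constant to consolidate cleanly to $B_4$ without introducing extra slow-decaying additive pieces.
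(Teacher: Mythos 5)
Your proposal is correct in substance and reaches the theorem by the same skeleton the paper uses (descent lemma for $f_\mu$, the projection optimality inequality $\fprod{G_k^x,s_k-x_k}\le-\tfrac{1}{\tau}\norm{s_k-x_k}^2$, the gradient-error lemmas, and a verbatim reuse of the dual analysis from Theorem \ref{thm:proj-free}), but it diverges from the paper at the one step that matters. The paper does \emph{not} apply Young's inequality to the cross terms: it keeps them in product form, $\gamma\bL_{\cL_2}\norm{y_k-y_\mu^*(x_k)}\norm{s_k-x_k}+\gamma\norm{\nabla_x\cL(x_k,y_k)-G_k^x}\norm{s_k-x_k}$, sums over $k$, and invokes the already-established summed bounds \eqref{eq:sum-Ak-y*} and \eqref{eq:sum-Ak-grad} of Lemmas \ref{lem: y-ymu-bound} and \ref{lem: err-gradxL}, which deliver exactly a constant plus $\Gamma\sum_k\norm{s_k-x_k}^2$; the quadratic remainder is then absorbed by constraining $\tau\le 2/(\bL_{\cL_2}\Gamma_2+\bC_{v2}\Gamma_3+\Gamma_5+\tfrac{1}{2}(\bL_{\cL_1}+\bL_{\cL_2}L_{y\mu}/\mu)\gamma)$, and $B_4$ is read off as $(\bL_{\cL_2}\Gamma_1+2\bC_{v2}\Gamma_1+\Gamma_4)/(2\gamma)$. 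Your route—Young with parameter $\tau$, then squaring the telescoping error bounds and re-deriving the convolution estimates for squared quantities—also works, but it buys you nothing here and costs something: you must redo the summation bookkeeping (including the nested $\norm{y_{i+1}-y_i}$ convolution inside Lemma \ref{lem:lower-w-v}) with extra $1/(1-\rho)$, $1/(1-\beta)$, $1/(1-\rho_D)$ factors and squared Lipschitz constants, so your additive constant $C_1$ and coefficient $C_2$ do not "emerge directly" with the stated order of $B_4$—that claim needs an explicit computation you have not done. Your absorption condition also lands on $\gamma$ (given $\tau$) rather than on $\tau$, which is compatible with the corollary's step-size choices but changes which parameter is constrained relative to the theorem as the authors use it. Finally, your attribution of the threshold $\bar K$ to the squared primal transient is not how the paper uses it: $\bar K$ enters only through the per-iterate dual bound (forcing $\rho_D^{k/2}/(1-\beta)$ and $\beta^{k/2}/(1-\rho_D)$ below $\mu$ for $k\ge\bar K$ in Corollary \ref{cor:proj}); the averaged bounds displayed in the theorem hold for all $K\ge1$. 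None of these points is a fatal gap, but the constant-tracking and the role of $\bar K$ are the two places where your write-up is looser than the argument it needs to be.
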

\begin{proof}
    Similar to \eqref{eq:lip-f-mu}, from Lipschitz continuity of $\nabla f_\mu$ we have that
\begin{align}\label{eq:lip-f-mu-2}
    f_\mu (x_{k+1}) & \leq   f_\mu (x_k) + \langle \nabla f_\mu(x_k) -\nabla_x\mathcal L(x_k,y_k), x_{k+1} - x_k \rangle +\langle \nabla_x\mathcal L(x_k,y_k), x_{k+1} - x_k \rangle \nonumber\\
    &\quad + \frac{1}{2} (\bL_{\mathcal L_1} + \frac{\bL_{\mathcal L_2} L_{y\mu}}{\mu})\|{x_{k+1} - x_k}\|^2\nonumber\\
    &=f_\mu (x_k) + \langle \nabla f_\mu(x_k) -\nabla_x\mathcal L(x_k,y_k), x_{k+1} - x_k \rangle  +\gamma\langle G_k^x, s_k - x_k \rangle\nonumber\\
    &\quad +\gamma\langle \nabla_x\mathcal L(x_k,y_k)-G_k^x, s_k - x_k \rangle+ \frac{1}{2} (\bL_{\mathcal L_1} + \frac{\bL_{\mathcal L_2} L_{y\mu}}{\mu})\|{x_{k+1} - x_k}\|^2,
\end{align}
where in the last inequality we add and subtract $\gamma\fprod{G_k,s_k-x_k}$. Recall the update of $s_k=\cP_\cX(x_k-\tau G_k^x)$ and $x_{k+1}=\gamma s_k+(1-\gamma)x_k$. The optimality condition of this step implies that
\begin{align}\label{eq:opt-proj}
    \gamma\fprod{G_k^x,s_k-x_k}=\fprod{G_k^x,x_{k+1}-x_k}\leq -\frac{\gamma}{\tau}\norm{s_k-x_k}^2.
\end{align} 
\ey{Moreover, let us define $\tilde{s}_k = \cP_{\cX}(x_k-\tau \nabla_x \cL(x_k,y_k))$ which implies that $\cG_\cX(x_k,y_k)=\frac{1}{\tau}\|\tilde{s}_k-x_k\|$. Considering $\gamma\fprod{G_k^x,s_k-x_k}$ once again and adding and subtracting $\fprod{\nabla_x\cL(x_k,y_k),\tilde{s}_k-s_k}$ we obtain 
\begin{align}\label{eq:proj-s-tilde}
    \gamma\fprod{G_k^x,s_k-x_k}&=\gamma\fprod{G_k^x,s_k-\tilde{s}_k}+\gamma\fprod{G_k^x-\nabla_x\cL(x_k,y_k),\tilde{s}_k-x_k}+\gamma\fprod{\nabla_x\cL(x_k,y_k),\tilde{s}_k-x_k}\nonumber\\
    &\leq \gamma\left(\|G_k^x\|\|s_k-\tilde{s}_k\|+\|G_k^x-\nabla_x\cL(x_k,y_k)\|\|\tilde{s}_k-x_k\|-\frac{1}{\tau}\|\tilde{s}_k-x_k\|^2\right)\nonumber\\
    &\leq \gamma\left(\tau(\|G_k^x\|+\|\nabla_x\cL(x_k,y_k)\|)\|G_k^x-\nabla_x\cL(x_k,y_k)\|-\frac{1}{\tau}\|\tilde{s}_k-x_k\|^2\right),
\end{align}
where in the first inequality we used Cauchy-Schwartz inequality and the optimality condition of $\tilde{s}_k$, and the last inequality follows from nonexpansivity of the projection operator and that $x_k=\cP_\cX(x_k)$.
Combining \eqref{eq:opt-proj} and \eqref{eq:proj-s-tilde} we arrive at 
\begin{equation*}
    \gamma\fprod{G_k^x,s_k-x_k}\leq \frac{\gamma}{2}\left(\tau(\|G_k^x\|+\|\nabla_x\cL(x_k,y_k)\|)\|G_k^x-\nabla_x\cL(x_k,y_k)\|-\frac{1}{\tau}\|\tilde{s}_k-x_k\|^2-\frac{1}{\tau}\|{s}_k-x_k\|^2\right).
\end{equation*}
Since the domains $\mathcal{X}$ and $\mathcal{Y}$ are bounded, and $\cL$ is continuously differentiable we conclude that $\|\nabla_x \mathcal{L}(x_k, y_k)\|$ is bounded. Furthermore, as shown in Lemmas \ref{lem:lower_GD_C} and \ref{lem:lower-w-v}, the sequences $\{\theta_k\}_{k\geq 0}$ and $\{w_k\}_{k\geq 0}$ are bounded, which together with continuous differentiability of functions $\Phi$ and $g$ in turn implies the uniform boundedness of $\|G_k^x\|$. Let us define $M\triangleq \sup_{k\geq 0}\max\{\|G_k^x\|,\|\nabla_x \cL(x_k,y_k)\|\}<+\infty$. Therefore, plugging the above inequality in \eqref{eq:lip-f-mu-2} and rearranging the terms we obtain
\begin{align*}
    \frac{\gamma}{2\tau}\norm{\tilde{s}_k-x_k}^2 &\leq  f_\mu (x_k) - f_\mu (x_{k+1})+\gamma \bL_{\cL_2}\norm{y_k-y_\mu^*(x_k)}\norm{s_k-x_k} +2\gamma\tau M \|\nabla_x \mathcal L(x_k,y_k)-G_k^x\|  \nonumber\\
    &\quad  + \left(\frac{1}{2} (\bL_{\mathcal L_1} + \frac{\bL_{\mathcal L_2} L_{y\mu}}{\mu})\gamma^2-\frac{\gamma}{2\tau}\right)\norm{s_k-x_k}^2.
\end{align*}}%
\mma{Summing the above inequality over $k=0$ to $K-1$, dividing both sides by $\gamma K$, and using the second part of Lemma \eqref{lem: y-ymu-bound} we obtain
\begin{align}\label{eq:rate-initial-x}
    \frac{1}{2\tau K}\sum_{k=0}^{K-1}\|\tilde{s}_k-x_k\|^2&\leq \frac{f_\mu (x_0) - f_\mu (x_K)}{\gamma K}+ \frac{\bL_{\cL_2}\Gamma_1}{K}+\underbrace{\frac{2\tau M}{K}\sum_{k=0}^{K-1}\|\nabla_x \mathcal L(x_k,y_k)-G_k^x\|}_{(*)}\nonumber\\
    &\quad +\left(\bL_{\cL_2}\Gamma_2+\frac{1}{2} (\bL_{\mathcal L_1} + \frac{\bL_{\mathcal L_2} L_{y\mu}}{\mu})\gamma-\frac{1}{2\tau }\right)\frac{1}{K}\sum_{k=0}^{K-1}\norm{s_k-x_k}^2.
\end{align}}
\ey{Next, we establish an upper bound for the term $(*)$. To this end, we begin by obtaining a bound for the consecutive iterates of $y_k$ as follows,
\begin{align*}
    \|y_{k+1}-y_k\|&\leq \|y_k-\sigma\nabla_y\cL_\mu(x_k,y_k)-y_{k-1}+\sigma\nabla_y\cL_\mu(x_{k-1},y_{k-1})\|\\
    &\leq \sigma\|\nabla_y\cL_\mu(x_k,y_k)-\nabla_y\cL_\mu(x_{k-1},y_k)\|+\|y_k-\sigma\nabla_y\cL_\mu(x_{k-1},y_k)-y_{k-1}+\sigma\nabla_y\cL_\mu(x_{k-1},y_{k-1})\|\\
    &\leq \sigma \bL_{\cL_2}\|x_k-x_{k-1}\|+\rho_D\|y_k-y_{k-1}\|,
\end{align*}
where in the first inequality we used the nonexpansivity of the projection operator, the second inequality is followed by the triangle inequality, and the last inequality is the result of Lipschitz continuity of $\nabla_y\cL_\mu$ and a similar argument as in the proof of Lemma \ref{lem: lower-y-ymu}. Continuing the above recursive relation and using the facts that $\|x_{k+1}-x_k\|\leq \gamma\tau M$ and $y_{-1}=y_0$, we obtain 
\begin{equation*}
    \|y_{k+1}-y_k\|\leq \frac{\sigma \gamma\tau}{1-\rho_D}  \bL_{\cL_2} M. 
\end{equation*}
Therefore, considering the first part of Lemma \ref{lem: err-gradxL}, using the above inequality, and that $\|s_k-x_k\|\leq \tau M$ for any $k\geq 0$, we obtain
\begin{align*}
    \sum_{k=0}^{K-1}\|\nabla_x \mathcal L(x_k,y_k)-G_k^x\|&\leq \Gamma_4+\tau M\Gamma_5 K +C_{\theta x}^g\bC_{v2}\frac{\sigma \gamma \tau \bL_{\cL_2}}{(1-\rho)(1-\rho_D)}M K.
\end{align*}}%
\mma{Therefore, using the above inequality within \eqref{eq:rate-initial-x}, the step-size condition $\tau\leq 1/(2\bL_{\cL_2}\Gamma_2+ (\bL_{\mathcal L_1} + \frac{2\bL_{\mathcal L_2} L_{y\mu}}{\mu})\gamma)=\cO(1/(\kappa_g^3\gamma))$, and multiplying both sides by $2/\tau$ we obtain
\begin{equation*}
    \frac{1}{\tau^2 K}\sum_{k=0}^{K-1}\norm{\ey{\tilde{s}_k}-x_k}^2 
        \leq\frac{2(f(x_0) - f(x^*))+\mu D_\cY}{\tau \gamma K}+ \frac{2\bL_{\cL_2}\Gamma_1+4\tau M\Gamma_4}{\tau K}+4\tau\gamma  M^2\underbrace{\Big(\frac{\Gamma_5}{\gamma}+\frac{\sigma C_{\theta x}^g\bC_{v2}\bL_{\cL_2}}{(1-\rho)(1-\rho_D)}\Big)}_{\triangleq B_4},
\end{equation*}}
\mma{which completes the first part of the proof by noting that $1-\rho=\Omega(1/\kappa_g)$, $1-\rho_D=\Omega(\mu)$, $\bL_{\cL_2}=\cO(\kappa_g)$, $\bC_{v2}=\cO(\kappa_g)$ and $\Gamma_5=\cO(\gamma\kappa_g^3)$ imply that \ey{$B_4=\cO(\kappa_g^3/\mu)$}}.

Next, using the same line of proof as in Theorem \ref{thm:proj-free}, we can derive a bound for the dual gap function as in \eqref{eq: Gap-y-final}. Therefore, using boundedness of constraint set $\cX$  and by noting that $\sum_{i=1}^k \rho_D^{k-i}\beta^i\leq (\frac{\rho_D^{k/2}}{1-\beta}+\frac{\beta^{k/2}}{1-\rho_D})\beta$ for any $k\geq 2$, we conclude that
\begin{align}\label{eq:dual-gap}
    \mathcal{G}_{\mathcal{Y}} (x_k,y_k) &\leq 
    \frac{2}{\sigma}\rho_D^k \left\| y_0 - y^*_\mu(x_0) \right\| + \frac{2\gamma L_{y\mu}}{\sigma\mu(1-\rho_D)}D_\cX + 2L_{y\theta}^\Phi \beta\left(\frac{\rho_D^{k/2}}{1-\beta}+\frac{\beta^{k/2}}{1-\rho_D}\right) \left\|\theta_0 - \theta^*(x_0)\right\|\nonumber\\
    & \quad + \frac{2\gamma \beta L_{y\theta}^\Phi \bL_\theta}{(1-\rho_D)(1-\beta)}D_\cX+ \frac{L_{y\mu}}{\mu\sigma}\gamma D_\cX + L_{y\theta}^\Phi\beta^k \left\|\theta_0 - \theta^*(x_0)\right\| + \frac{\gamma L_{y\theta}^\Phi\bL_\theta}{1-\beta} D_\cX + \mu D_\mathcal{Y}.
\end{align}

\end{proof}
\begin{corollary}\label{cor:proj}
Under the premises of Theorem \ref{thm:proj}, let $\mu=\mathcal O(\epsilon)$, $\gamma=\mathcal O(\epsilon^3)$ and $\tau=\cO(1/\kappa_g^3)
$, then for any $K\geq 2\bar K=\cO(1/\mu\log(1/\mu))$, there exists $t\in \{\bar K,\hdots,K-1\}$ such that $(x_t,y_t)\in \mathcal{X}\times \mathcal{Y}$ satisfies  $\cG_\cZ(x_t,y_t)\leq \epsilon$ within $\cO(\frac{\kappa_g^3}{\epsilon^5})$ 
iterations.
\end{corollary}
\begin{proof}
    Considering the result 
    of Theorem \ref{thm:proj} on the primal gap function and multiplying both sides by $K/(K-\bar K)$ for some $0<\bar K\leq K/2$. Then, we have that for any $K\geq 2$,
\begin{align*}
    \frac{1}{\tau^2 (K-\bar K)}\sum_{k=\bar K}^{K-1}\norm{\tilde{s}_k-x_k}^2 &\leq \frac{1}{\tau^2 (K-\bar K)}\sum_{k=0}^{K-1}\norm{s_k-x_k}^2 \nonumber\\
    & \leq  \frac{2(f(x_0) - f(x^*))+\mu D_\cY^2}{\tau\gamma (K-\bar K)}+ 
    \frac{2\bL_{\cL_2}\Gamma_1+4\tau M\Gamma_4}{\tau (K-\bar{K})}+8\tau\gamma  M^2 B_4. 
\end{align*}
Let $k^*= \argmin_{k=\bar K}^{K-1} \norm{\tilde{s}_k-x_k}$. Lower-bounding the LHS and taking the square root from both sides of the above inequality, we obtain
\begin{align*}
    \cG_\cX(x_{k^*},y_{k^*})=\frac{1}{\tau} \norm{\tilde{s}_{k^*}-x_{k^*}} 
    & \leq  \sqrt{\frac{2(f(x_0) - f(x^*))+\mu D_\cY^2}{\gamma\tau (K-\bar K)}}+    
    \sqrt{\frac{2\bL_{\cL_2}\Gamma_1+4\tau M\Gamma_4}{\tau (K-\bar{K})}}+\sqrt{8\tau\gamma  M^2 B_4} 
    \nonumber\\
    &= \cO\left(\frac{1}{\sqrt{\gamma\tau K}}+\sqrt{\frac{\gamma\tau \kappa_g^3}{\mu}}\right).
\end{align*}

Next, we focus on the dual gap function in \eqref{eq:dual-gap}. Let us define $\bar K\triangleq \max\{\frac{2}{1-\rho_D}\log(\frac{1}{(1-\beta)\mu}),\frac{2}{1-\beta}\log(\frac{1}{(1-\rho_D)\mu})\}$. Then, for any $k\geq \bar K$, we have that $\max\{\rho_D^{k/2}/(1-\beta),\beta^{k/2}/(1-\rho_D)\}\leq \mu$. Therefore, for any $k\geq \bar K$ the RHS in \eqref{eq:dual-gap} can be further bounded as follows
\begin{align*}
    \mathcal{G}_{\mathcal{Y}} (x_k,y_k) &\leq 
    \frac{2}{\sigma}\mu^2(1-\beta)^2 \left\| y_0 - y^*_\mu(x_0) \right\| + \frac{2\gamma L_{y\mu}}{\sigma\mu(1-\rho_D)}D_\cX + 2L_{y\theta}^\Phi \beta\mu  \left\|\theta_0 - \theta^*(x_0)\right\|\nonumber\\
    & \quad + \frac{2\gamma \beta L_{y\theta}^\Phi \bL_\theta}{(1-\rho_D)(1-\beta)}D_\cX+ \frac{L_{y\mu}}{\mu\sigma}\gamma D_\cX + L_{y\theta}^\Phi\mu^2(1-\rho_D)^2 \left\|\theta_0 - \theta^*(x_0)\right\| + \frac{\gamma L_{y\theta}^\Phi\bL_\theta}{1-\beta} D_\cX + \mu D_\mathcal{Y}\nonumber\\
    &=\cO(\frac{\gamma\kappa_g}{\mu^2}+\frac{\gamma\kappa_g^2}{\mu}+\mu),
\end{align*}
where the last equality follows from $1-\rho_D=\Omega(\mu)$, $1-\beta=\Omega(1/\kappa_g)$, and $L_{y\mu}=\cO(\kappa_g)$. Now combining the primal and dual gap functions, we conclude that $\cG_\cZ(x_{k^*},y_{k^*})\leq \cO(\frac{\gamma \kappa_g}{\mu^2}+\frac{\gamma\kappa_g^2}{\mu}+\mu+\frac{1}{\sqrt{\gamma\tau K}}+\sqrt{\frac{\gamma\tau\kappa_g^3}{\mu}})$. Selecting $\mu=\cO(\kappa_g^{3/5}/K^{1/5})$, $\gamma=\frac{\mu^3}{\tau\kappa_g^3}=\cO(1/(\tau \kappa_g^{6/5} K^{3/5}))$, and the primal step-size $\tau$ selected as a constant $\tau\leq \cO(1/\kappa_g^3)$ implies that $\cG_\cZ(x_{k^*},y_{k^*})\leq \cO(\frac{\kappa_g^{3/5}}{K^{1/5}})$. 
Therefore, to achieve $\cG_\cZ(x_{k^*},y_{k^*})\leq \epsilon$, 
Algorithm \ref{i-BRPD:FP} requires at least $K=\cO(\frac{\kappa_g^3}{\epsilon^5})$ 
iterations.
\end{proof}
\begin{corollary}\label{cor:proj-linear-y}
\mma{Suppose the premises of Theorem \ref{thm:proj} holds and we further assume that \mma{$\Phi$ is strongly concave in $y$}. Let $\gamma = \cO(\epsilon^2)$ and \ey{$\tau=\cO(1/\kappa_g^3)$}, then for any $K\geq 1$, there exists $t\in \{0,\hdots,K-1\}$ such that $(x_t,y_t)\in \mathcal{X}\times \mathcal{Y}$ satisfies  $\cG_\cZ(x_t,y_t)\leq \epsilon$ within \mma{$\cO(\frac{\kappa_g^3}{\epsilon^4})$} iterations.}
\end{corollary}
\begin{proof}
    \mma{Suppose that $\Phi(x,\cdot)$ is strongly concave for any $x$ with modulus $\bar{\mu}>0$. This means that we no longer need to add a regularization parameter $\mu$ simplifying the bound on the primal and dual gap functions obtained in Theorem \ref{thm:proj}. In particular, one can replace $\mu$ with $\bar{\mu}$ in the bounds of Lemmas \ref{lem: lower-y-ymu}, \ref{lem: y-ymu-bound}, and \ref{lem: y-bound}. Therefore, the bound on the primal gap function in Theorem \ref{thm:proj} can be rewritten as $\cG_\cX(x_{k^*},y_{k^*})=\frac{1}{\tau} \norm{s_{k^*}-x_{k^*}} 
    \leq  \sqrt{\frac{2(f(x_0) - f(x^*))}{\gamma\tau (K-\bar K)}}+ \sqrt{\frac{2\bL_{\cL_2}\Gamma_1+4\tau M\Gamma_4}{\tau (K-\bar{K})}}+\sqrt{8\tau\gamma  M^2 B_4}$.  
    Next, we restate the dual gap function in \eqref{eq:dual-gap}. In fact for any $k\geq 1$,}
    \mma{
    \begin{align*}
    \mathcal{G}_{\mathcal{Y}} (x_k,y_k) &\leq 
    \frac{2}{\sigma}\rho_D^k \left\| y_0 - y^*_{\bar{\mu}}(x_0) \right\| + \frac{2\gamma L_{y\bar{\mu}}}{\sigma\bar{\mu}(1-\rho_D)}D_\cX + 2L_{y\theta}^\Phi \beta\left(\frac{\rho_D^{k/2}}{1-\beta}+\frac{\beta^{k/2}}{1-\rho_D}\right) \left\|\theta_0 - \theta^*(x_0)\right\|\nonumber\\
    & \quad + \frac{2\gamma \beta L_{y\theta}^\Phi \bL_\theta}{(1-\rho_D)(1-\beta)}D_\cX+ \frac{L_{y\bar{\mu}}}{\bar{\mu}\sigma}\gamma D_\cX + L_{y\theta}^\Phi\beta^k \left\|\theta_0 - \theta^*(x_0)\right\| + \frac{\gamma L_{y\theta}^\Phi\bL_\theta}{1-\beta} D_\cX \nonumber\\
    &=\cO\left(\max\{\rho_D,\beta\}^{k/2}+\gamma\kappa_g^2\right).
\end{align*}}
     \mma{Therefore, $\cG_\cZ(x_{k^*},y_{k^*})\leq \mma{\cO(\gamma\kappa_g^2+\frac{1}{\sqrt{\gamma\tau K}}+\sqrt{\gamma\tau\kappa_g^3})}$, choosing $\gamma=\cO(\kappa_g^{1.5}/K^{1/2})$ and the primal-step-size as constant $\tau=\cO(1/\kappa_g^3)$, it follows that $\cG_\cZ(x_{k^*},y_{k^*})\leq \mma{\cO(\frac{\kappa_g^{3/4}}{K^{1/4}}+\frac{\kappa_g^{3.5}}{K^{1/2}})}$. Therefore, to achieve $\cG_\cZ(x_{k^*},y_{k^*})\leq \epsilon$, Algorithm \ref{i-BRPD:FP} requires $K=\mma{\cO(\frac{\kappa_g^3}{\epsilon^4})}$ iterations.}
\end{proof}
\begin{remark}\label{remak:proj}
    \mma{We would like to highlight that our results in Theorem \ref{thm:proj} demonstrate a new bound on the primal and dual gap function defined in Definition \ref{def:gap-func-Alg2}. As shown in Corollary \ref{cor:proj}, Algorithm \ref{i-BRPD:FP} achieves an $\epsilon$-stationary solution within $\cO(\epsilon^{-5})$ iterations, presenting a novel result under the general setting for nonbilinear SPB problems. This matches the best-known result of $\cO(\epsilon^{-5})$ from \cite{gu2021nonconvex,gu2022min}, which is achieved under the specific problem setup where the upper-level objective function is linear in the maximization component. Furthermore, 
    when the upper-level function $\Phi$ is strongly concave in $y$, our result improves to $\cO(\epsilon^{-4})$ which is the best-known result for the deterministic setting. In the stochastic setting, \cite{hu2022multi} achieves an $\epsilon$-stationary solution within $\cO(\epsilon^{-4})$ iterations for the implicit gradient method, where they used a stronger notion of stationarity compared to ours. To achieve the same $\epsilon$-stationary point, they incur an additional complexity factor of $\cO(\epsilon^{-2})$, resulting in an overall complexity of $\cO(\epsilon^{-6})$ in the stochastic setting -- see Proposition 4.11 in \cite{lin2020gradient}.}
\end{remark}
\begin{remark}
    According to the results of Corollaries \ref{cor:proj-free} and \ref{cor:proj}, we have demonstrated that for solving the general non-bilinear SPB problem, our proposed methods i-BRPD:OPF and i-BRPD:FP require $\mathcal{O}(\epsilon^{-4})$ and $\mathcal{O}(\epsilon^{-5})$ iterations to achieve an $\epsilon$-stationary point, respectively. However, one needs to compare these bounds with extra caution as the metrics used to measure the gap function and the oracle used by the algorithm (i.e., LMO and PO) are fundamentally different and are not directly comparable. 
\end{remark}
\section{Numerical Experiment}\label{sec:numeric}
In this section, we conduct two experiments to evaluate the performance of our algorithms in solving the SPB problem \eyh{by comparing them against the state-of-the-art method MORBiT \cite{gu2022min}.} Specifically, we implement our algorithms to address a robust multi-task linear regression example as described in Section \ref{subsec:examples}. In particular, each task $i\in\{1,\hdots,T\}$ consists of a linear regression model with training and validation datasets denoted by $\cD_i^{tr}$ and $\cD_i^{val}$, respectively. 
For each task $i$, we consider a model in the form of $b_i = A_i(\lambda_iy_i + (1-\lambda_i)x) + \epsilon_i$ \eyh{where $\epsilon_i\sim\cN(0,s)$}, which includes a common coefficient $x\in\reals^d$, task-specific coefficients $y_i\in\reals^d$, and parameter $\lambda_i\in [0,1]$ for a given dataset $\mathcal{D}_i=(A_i,b_i)$ where $A_i\in\mathbb{R}^{n_i\times d}$ and $b_i\in\mathbb{R}^{n_i}$. We define the loss function as $\ell_i(y_i;\mathcal{D}_i)=\frac{1}{2n_i}\norm{A_iy_i-b_i}^2$.

In our experiments, we consider three datasets: the 1979 National Longitudinal Survey of Youth (\textit{NLSY79}) dataset\footnote{\href{https://www.bls.gov/nls/nlsy79.htm}{NLSY79 dataset overview}} ($n=6213, d=21$), the \textit{MTL-dataset} ($n=5642, d=14$), which comprises a mixture of datasets including bodyfat, housing, mg, mpg, and space-ga from the LIBSVM library \cite{chang2011libsvm}, and a synthetic dataset ($n=5000, d=100$), generated randomly from a standard Gaussian distribution, where $n$ represents the number of data points, and $d$ denotes the number of features in each dataset. We distribute the data points evenly among $N=5$ tasks. Then, we allocate 75\% of the data for each task as the training dataset and reserve the remaining 25\% as the validation dataset. To evaluate the performance of our proposed algorithms, we plot the iterates in terms of the gap function defined in Definition \ref{def:gap-func-Alg1}. 

For all algorithms, we let the maximum number of iterations $K=10^4$, and the step-sizes are selected based on their theoretical results. In particular, for i-BRPD:OPF, we let $\gamma=\nu/K^{2/3}$, $\mu = \nu/K^{1/3}$; for i-BRPD:FP, we let $\gamma=\nu/K^{1/2}$, $\mu = \nu/K^{1/4}$, and $\tau = 0.7$; in the both proposed algorithms,  we consider $\alpha = \eta = \frac{2}{\mu_g + L_g}$, and $\sigma = \frac{1}{\mu}$; for MORBiT, we let $\sigma =\nu/K^{3/5}$, $\alpha = 1/K^{2/5}$, and $\tau = \nu/K^{3/5}$, where $\nu>0$ is a tuning parameter that is adjusted to observe the best performance for the considered methods.
\paragraph{Experiment 1.}  \eyh{Considering the worst-task training, we aim to solve the following SPB problem:}
\begin{align}\label{eq:numeric-example1}
    &\min_{\|x\|_1\leq Q, \lambda \in [0,1]^T}\max_{\eta \in \Delta_T} \sum_{i=1}^T \eta_i \ell_i(y_i^*(x,\lambda_i);\cD_i^{val})\\
    &\text{s.t.}~ y_i^*(x,\lambda_i)\in\argmin_{y_i\in\mathbb R^d} \ell_i(\lambda_i y_i + (1-\lambda_i)x;\cD_i^{tr})+\frac{\rho}{2}\norm{y_i}^2,\nonumber
\end{align} 
\eyh{where the constraint $\|x\|_1\leq Q$ is imposed to induce sparsity for some $Q>0$. 
Note that due to the separability of the lower-level problems in $y_i$, \eqref{eq:numeric-example1} can be formulated as a special case of \eqref{eq:main-prob} by considering $(x,\lambda)\in\reals^{d+T}$ as the minimization variables, $\eta\in\reals^T$ as the maximization variable, $\theta=[y_i]_{i=1}^T\in\reals^{dT}$ as the lower-level variable. In this case, $g((x,\lambda),\theta)=\sum_{i=1}^T\ell_i(\lambda_i y_i + (1-\lambda_i)x;\cD_i^{tr}) + \frac{\rho}{2}\norm{\theta}^2$, and $\Phi((x,\lambda),\theta,\eta)=\sum_{i=1}^T \eta_i \ell_i(y_i;\cD_i^{val})$.} It is worth mentioning that, to compare our algorithms with the MORBiT method, we consider a special case of \eqref{eq:main-prob} where the upper-level objective function is assumed to be linear in the maximization component.

The performance of the algorithms is depicted in Figure \ref{fig:mainfig}. Notably, our proposed algorithms exhibit a faster convergence compared to MORBiT. This superiority stems from using an inexact nested approximation technique for estimating the Hessian inverse matrix in our proposed method (see the discussion in Section \ref{sec:main-algorithm}). In contrast, MORBiT approximates the Hessian inverse using the Neumann series which entails a biased approximation requiring multiple Hessian evaluations. Moreover, i-BRPD:OPF algorithm, which is a one-sided projection-free method, demonstrates the fastest convergence behavior among all algorithms, possibly due to allowing for a larger selection of step-size parameters.
\mma{\paragraph{Experiment 2.} We conduct an additional experiment to explore a more general setting in which $\Phi$ is nonlinear in the maximization variable. Specifically, we examine the robust multi-task linear regression problem, similar to the previous experiment but with a different uncertainty set. In particular, consider the following problem:
\begin{align*}
    &\min_{\|x\|_1\leq Q, \lambda \in [0,1]^T}\max_{\eta \in \cP} \sum_{i=1}^T \eta_i \ell_i(y_i^*(x,\lambda_i);\cD_i^{val})\\
    &\text{s.t.}~ y_i^*(x,\lambda_i)\in\argmin_{y_i\in\mathbb R^d} \ell_i(\lambda_i y_i + (1-\lambda_i)x;\cD_i^{tr})+\frac{\rho}{2}\norm{y_i}^2,\nonumber
\end{align*}
where $\cP\triangleq \{\eta \in \Delta_T : V(\eta,\frac{1}{T}\bold{1}_T)\leq \rho' \}$ and $V(\cdot,\cdot)$ denotes the divergence measure between the unknown probability distribution $\eta$ and the uniform distribution $\frac{1}{T}\bold{1}_T$ over the dataset. Using a penalty parameter $\beta>0$, we can relax the divergence constraint and obtain the following problem: 
\begin{align}\label{eq:numeric-example2}
    &\min_{\|x\|_1\leq Q, \lambda \in [0,1]^T}\max_{\eta \in \Delta_T} \sum_{i=1}^T \eta_i \ell_i(y_i^*(x,\lambda_i);\cD_i^{val}) - \frac{\beta}{T} \Big(\frac{1}{2}\|T\eta- \bold{1}_T\|^2 - \rho'\Big)\\
    &\text{s.t.}~ y_i^*(x,\lambda_i)\in\argmin_{y_i\in\mathbb R^d} \ell_i(\lambda_i y_i + (1-\lambda_i)x;\cD_i^{tr})+\frac{\rho}{2}\norm{y_i}^2.\nonumber
\end{align}
Note that in \eqref{eq:numeric-example2} the upper-level function is nonlinear in $\eta$.}\\
\mma{As shown in Figure \ref{fig:expfig2}, our proposed algorithms demonstrate fast convergence when the upper-level objective function is nonlinear in both the primal and dual variables. This underscores the effectiveness of our methods in handling general settings, demonstrating greater versatility compared to existing approaches, which may be restricted to specific problem setups.}

\begin{figure}
    \centering
    \subfloat[NLSY97 dataset]{\includegraphics[width=0.3\textwidth]{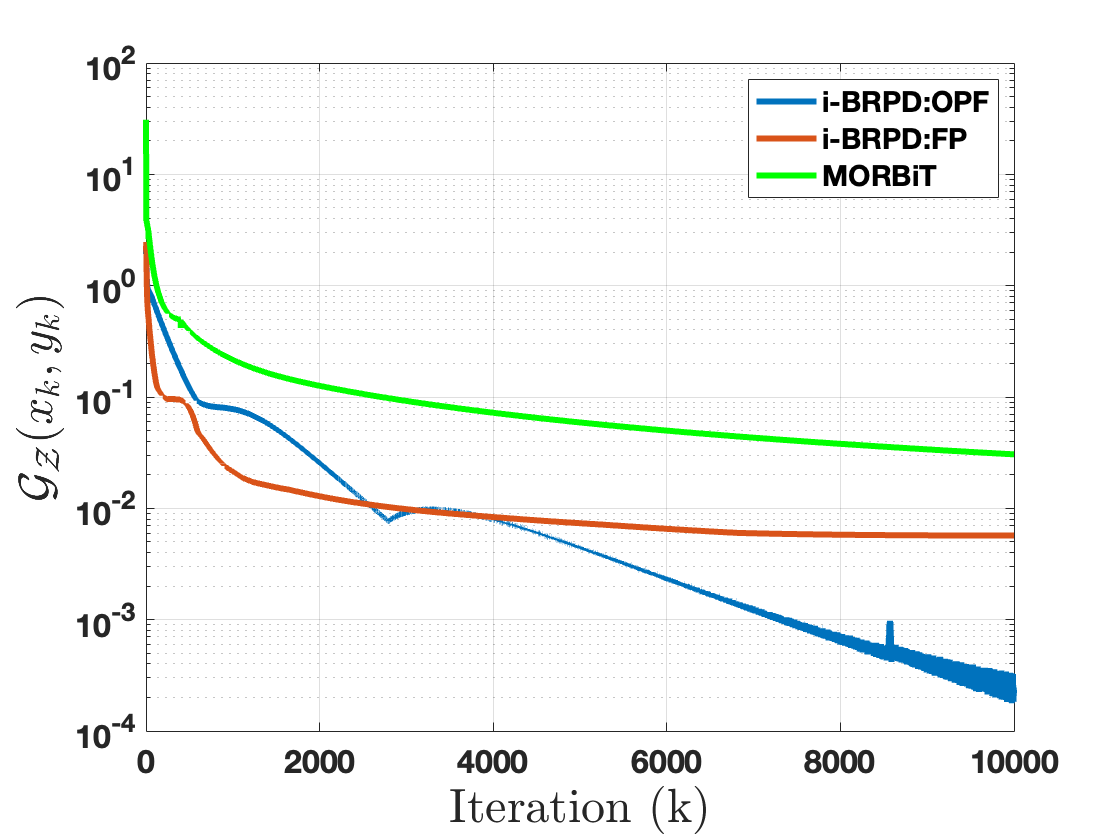}\label{fig:subfig1}}
    \hfill
    \subfloat[MTL dataset]{\includegraphics[width=0.3\textwidth]{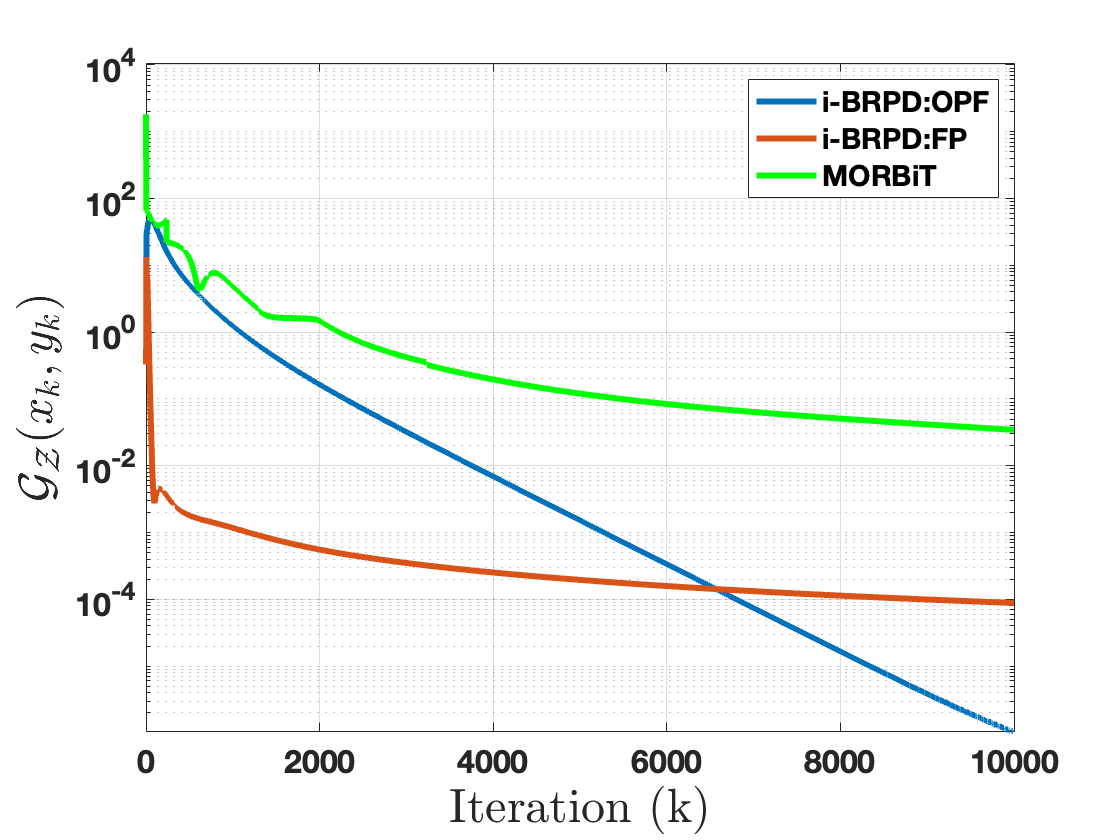}\label{fig:subfig2}}
    \hfill
    \subfloat[Synthetic dataset]{\includegraphics[width=0.3\textwidth]{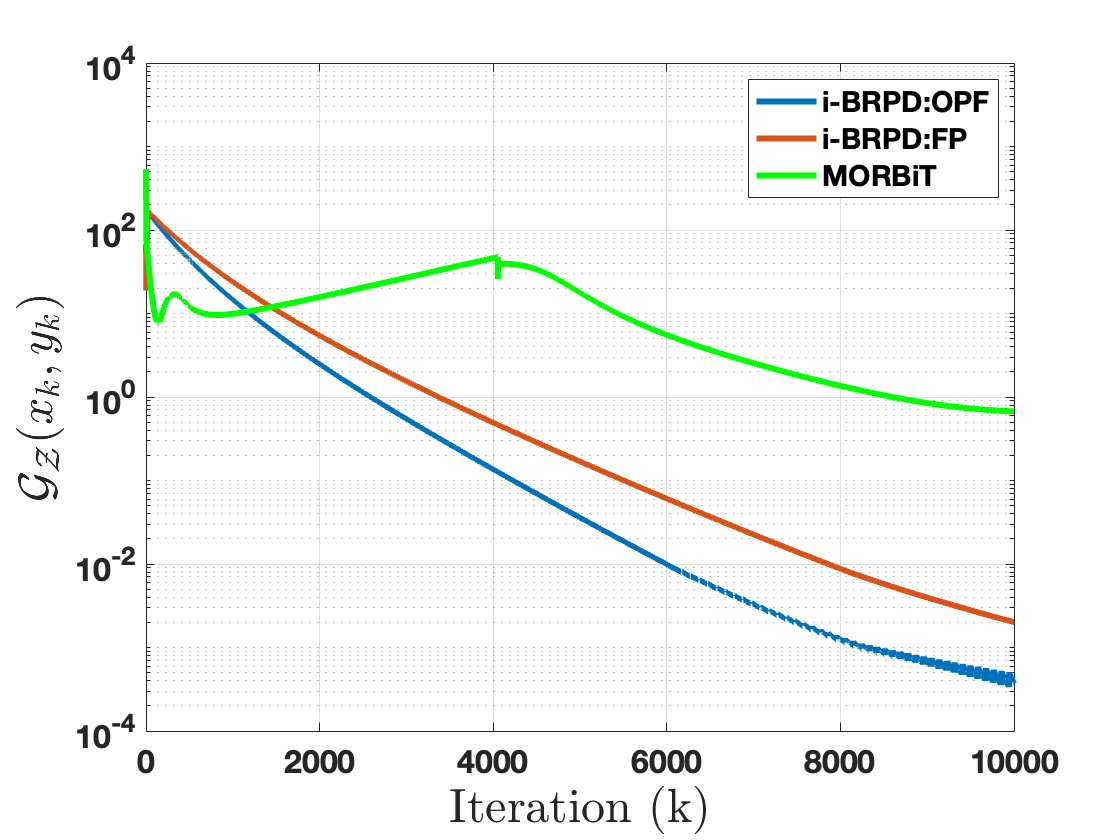}\label{fig:subfig3}}
    \caption{Comparing the performance of our proposed algorithms i-BRPD:OPF (blue) and i-BRPD:FP (red) with MORBiT (green) in Robust Multi-task Linear Regression problem} 
    \label{fig:mainfig}
\end{figure}
\begin{figure}[htp]
    \centering
    \subfloat[NLSY97 dataset]{\includegraphics[width=0.3\textwidth]{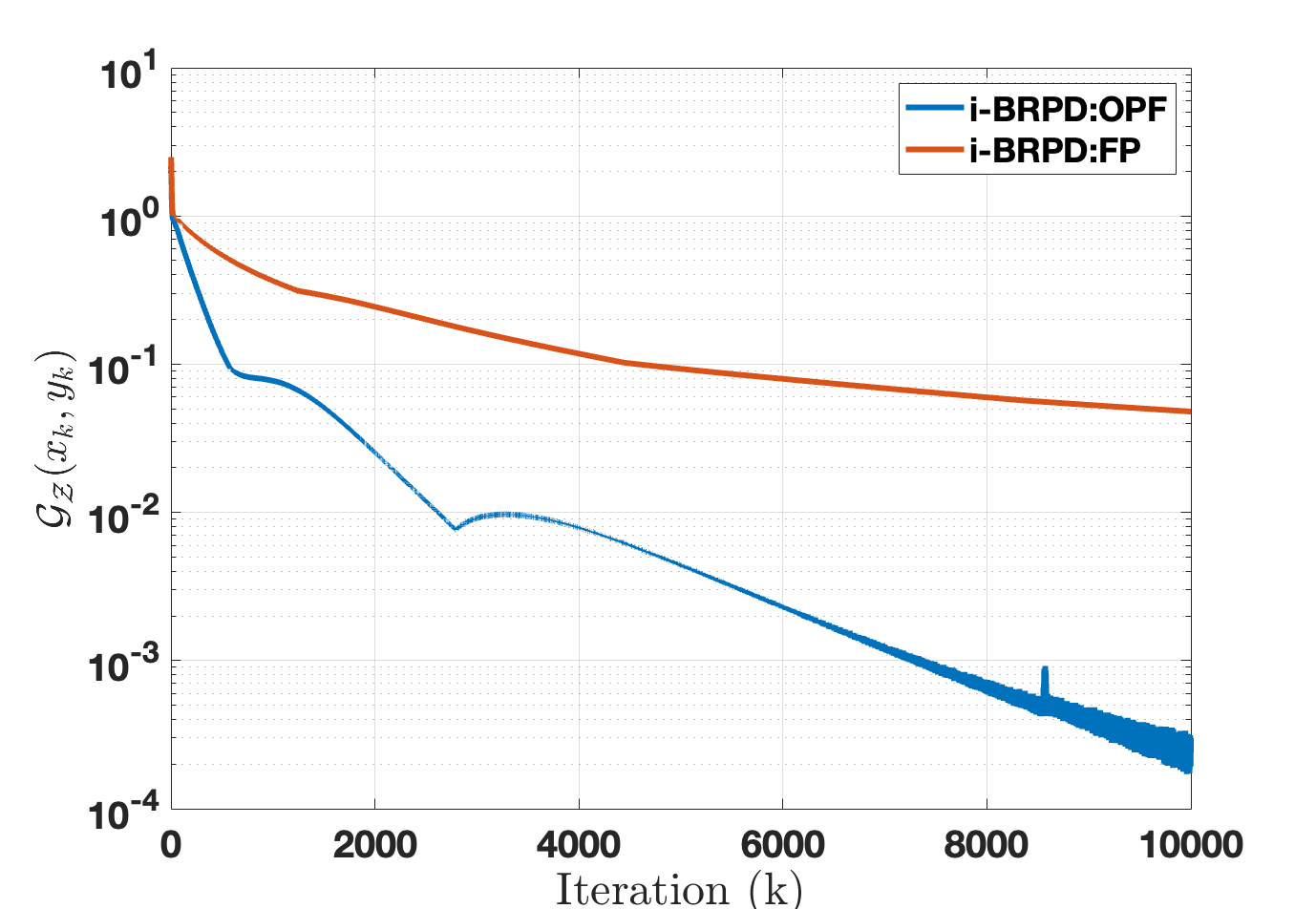}\label{fig:subfig1}}
    \hfill
    \subfloat[MTL dataset]{\includegraphics[width=0.3\textwidth]{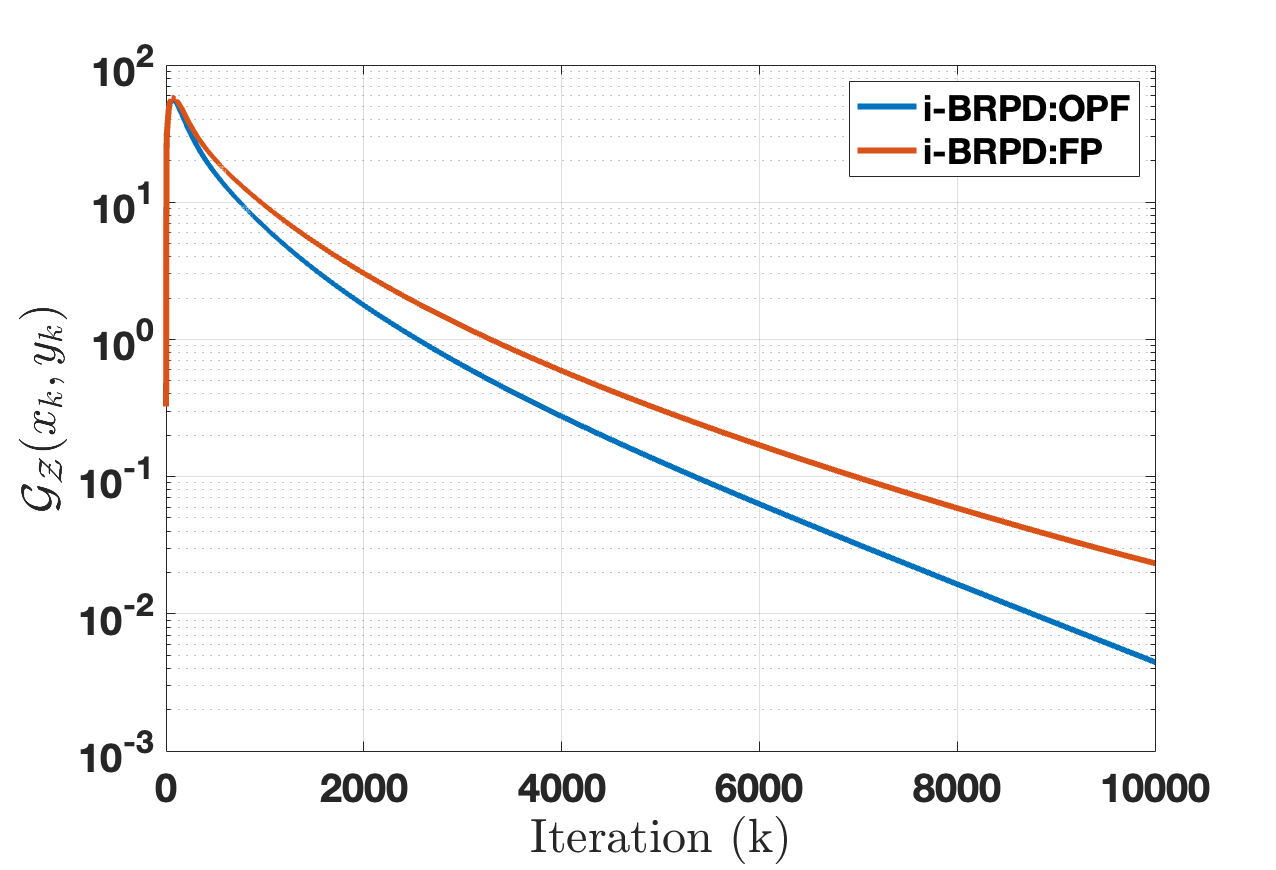}\label{fig:subfig2}}
    \hfill
    \subfloat[Synthetic dataset]{\includegraphics[width=0.3\textwidth]{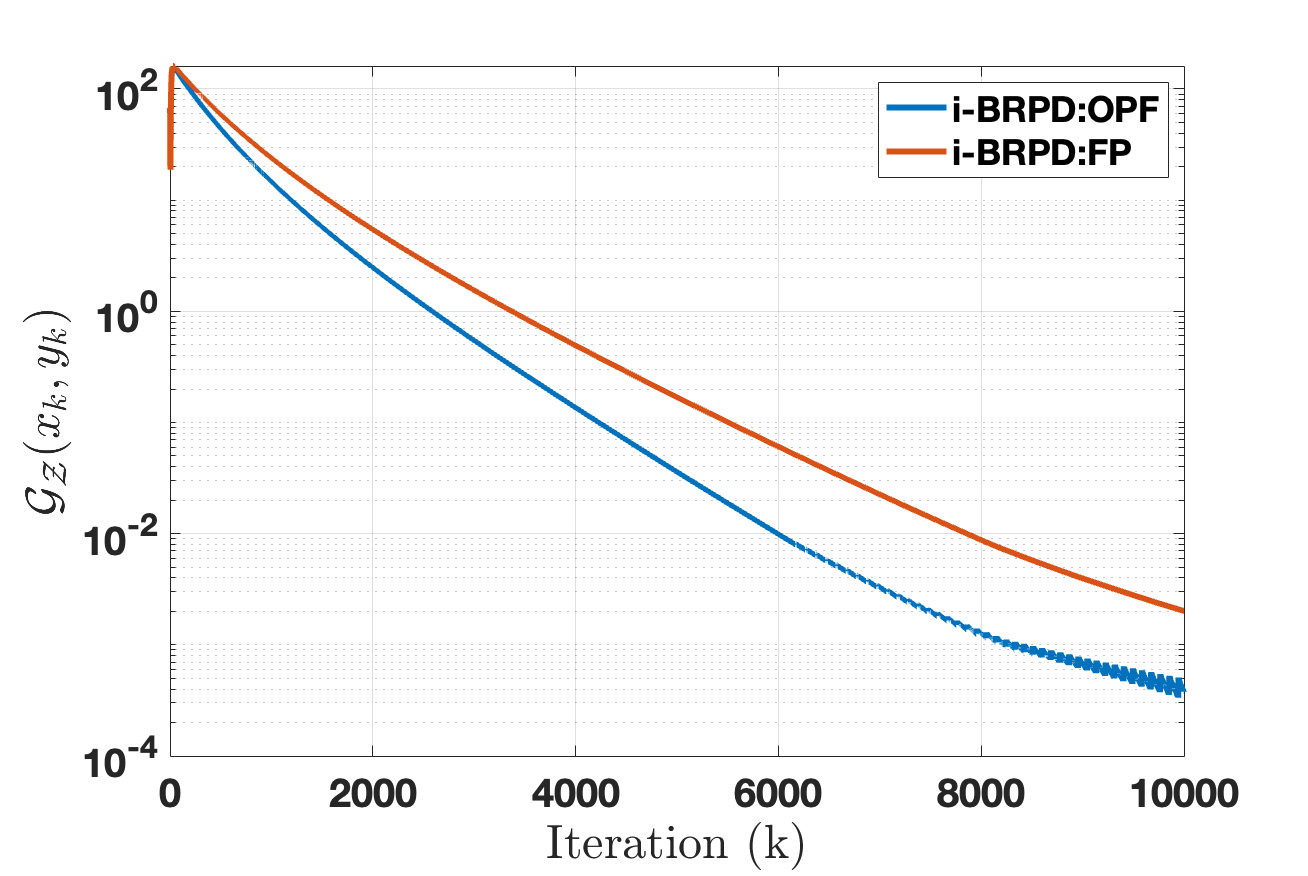}\label{fig:subfig3}}
    \caption{Comparing the performance of our proposed algorithms i-BRPD:OPF (blue) and i-BRPD:FP (red) for solving robust multi-task linear regression problems when the upper-level objective function is nonlinear}
    \label{fig:expfig2}
\end{figure}
\section{{Conclusions}}\label{sec:conclusions}
In this paper, we introduced a class of constrained saddle point problems with a bilevel structure, where the upper-level objective function is nonconvex-concave and smooth over a compact and convex constraint set, subject to a strongly convex lower-level objective function. By leveraging regularization and nested approximation techniques, we initially devised a primal-dual one-sided projection-free algorithm (i-BRPD:OPF), requiring $\mathcal{O}(\epsilon^{-4})$ iterations to attain an $\epsilon$-stationary point, moreover, when the objective function in the upper-level is linear in the maximization component, our results improve to $\cO(\epsilon^{-3})$. Subsequently, we developed a bilevel primal-dual fully projected algorithm (i-BRPD:FP), capable of achieving an $\epsilon$-stationary solution within $\mathcal{O}(\epsilon^{-5})$ iterations. This result improves to $\cO(\epsilon^{-4})$ when the upper-level objective function is strongly concave in $y$. 
Our proposed methods and their convergence guarantee present new results appearing for the first time in the literature for the considered setting, to the best of our knowledge.


\subsubsection*{Acknowledgements}
This work is supported in part by NSF Grant 2127696 and the Arizona Technology and Research Initiative Fund (TRIF) for Innovative Technologies for the Fourth Industrial Revolution. 

\section*{Appendix}\label{sec:appndx}


\section{Auxiliary Lemmas}\label{sec:required-lemmas}
In this section, we provide detailed explanations and proofs for the lemmas supporting the main results of the paper. \\

\begin{lemma}\label{lem:sum-sum-linear-comb}
    Let $\{A_k\}_{k\geq 0}$ be a sequence of nonnegative real numbers, $\rho\in (0,1)$, and $\beta\in (0,1)$. For any $K\geq 1$, 
    \begin{itemize}
        \item[a)] $\sum_{k=0}^{K-1}\sum_{i=0}^k \beta^{k-i} A_i \leq  \frac{1}{1-\beta}\sum_{k=0}^{K-1}A_k$.
        \item[b)] $\sum_{k=0}^{K-1}A_k\sum_{i=0}^{k-1} \beta^{k-i} A_i \leq  \frac{\beta}{1-\beta}\sum_{k=0}^{K-1}A_k^2$.
        \item[c)] $\sum_{k=0}^{K-1} A_k \sum_{i=0}^k \rho^{k-i} \sum_{j=0}^{i-1}\beta^{i-j} A_j \leq \frac{\beta}{(1-\beta)(1-\rho)}\sum_{k=0}^{K-1} A_k^2$. 
    \end{itemize}
\end{lemma}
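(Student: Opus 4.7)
\textbf{Proof proposal for Lemma \ref{lem:sum-sum-linear-comb}.} All three estimates are index-juggling on geometric tails, so my overall strategy is: (i) for the pure double sums, swap the order of summation and bound the resulting inner geometric series by its infinite tail; (ii) for the products $A_kA_i$ (or $A_kA_j$), symmetrize via Young's inequality $A_kA_i\le \tfrac12(A_k^2+A_i^2)$ so the right-hand side only involves $\sum_k A_k^2$, and then apply the same swap-and-bound trick to each of the two resulting pieces.

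For part (a), I would interchange the order of $k$ and $i$, obtaining $\sum_{i=0}^{K-1}A_i\sum_{k=i}^{K-1}\beta^{k-i}$, and then bound the inner sum by $\sum_{j=0}^{\infty}\beta^j=\frac{1}{1-\beta}$. For part (b), the plan is to write
\[
\sum_{k=0}^{K-1}A_k\sum_{i=0}^{k-1}\beta^{k-i}A_i \leq \tfrac{1}{2}\sum_{k=0}^{K-1}\sum_{i=0}^{k-1}\beta^{k-i}A_k^2+\tfrac{1}{2}\sum_{k=0}^{K-1}\sum_{i=0}^{k-1}\beta^{k-i}A_i^2,
\]
bound the first double sum by $\tfrac12\sum_k A_k^2\sum_{j=1}^{k}\beta^{j}\le \frac{\beta}{2(1-\beta)}\sum_k A_k^2$, and treat the second one by swapping the order of summation to get $\tfrac12\sum_i A_i^2\sum_{k=i+1}^{K-1}\beta^{k-i}\le \frac{\beta}{2(1-\beta)}\sum_i A_i^2$. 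Adding the two halves yields the claimed constant $\frac{\beta}{1-\beta}$.

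For part (c), the expression is the triple sum $\sum_{k,i,j}\rho^{k-i}\beta^{i-j}A_kA_j$ over $0\le j<i\le k\le K-1$. I would again apply Young's inequality $A_kA_j\le \tfrac12(A_k^2+A_j^2)$ and handle the two resulting triple sums separately. For the $A_k^2$ piece, I factor out $A_k^2$, bound $\sum_{j=0}^{i-1}\beta^{i-j}\le \frac{\beta}{1-\beta}$ and then $\sum_{i=0}^{k}\rho^{k-i}\le \frac{1}{1-\rho}$, giving $\frac{\beta}{(1-\beta)(1-\rho)}\cdot\tfrac12\sum_k A_k^2$. For the $A_j^2$ piece, I swap the order of summation as
\[
\sum_{j=0}^{K-2}A_j^2\sum_{i=j+1}^{K-1}\beta^{i-j}\sum_{k=i}^{K-1}\rho^{k-i},
\]
bound the innermost sum by $\frac{1}{1-\rho}$ and the middle one by $\frac{\beta}{1-\beta}$, obtaining the same constant $\frac{\beta}{(1-\beta)(1-\rho)}\cdot\tfrac12\sum_j A_j^2$. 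Adding the two contributions yields the stated bound.

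The main obstacle is nothing conceptually hard; rather, it is keeping the index ranges correct through the swaps in (c) (in particular, respecting the strict inequality $j<i$ when reindexing), and verifying that in each of the two Young pieces the $\rho$-tail and $\beta$-tail decouple cleanly so that both resulting upper bounds carry exactly the constant $\frac{\beta}{(1-\beta)(1-\rho)}$ rather than an asymmetric one. Once that bookkeeping is pinned down, the three estimates follow from elementary geometric-series inequalities.
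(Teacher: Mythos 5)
Your proposal is correct and follows essentially the same route as the paper: interchange of summation with geometric-tail bounds for part (a), and Young's inequality $A_kA_i\le\tfrac12(A_k^2+A_i^2)$ followed by the same geometric estimates for parts (b) and (c) (the paper phrases the $A_j^2$ piece of (c) as two applications of part (a) rather than an explicit index swap, but the computation is identical). The constants you obtain match the stated bounds.
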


\begin{proof}
Part (a):
\begin{align*}
    \sum_{k=0}^{K-1}\sum_{i=0}^k \beta^{k-i} A_i &= A_0 \sum_{k=0}^{K-1}\beta^i + \sum_{k=0}^{K-2}\beta^i + \sum_{k=0}^{K-3}\beta^i + ... + \sum_{k=0}^{K-1-(K-1)}\beta^i \nonumber\\
    & \leq \frac{1}{1-\beta}(A_0 + A_1 + ... + A_{K-1})\nonumber\\
    & \leq \frac{1}{1-\beta} \sum_{k=0}^{K-1} A_k.
\end{align*}
Part (b):
Using Young's inequality and part (a) we can show
\begin{align*}
    \sum_{k=0}^{K-1}A_k\sum_{i=0}^{k-1} \beta^{k-i} A_i &\leq \sum_{k=0}^{K-1} \sum_{i=0}^{k-1} \beta^{k-i} (A_k^2 + A_i^2)/2 \nonumber\\
    & \leq \sum_{k=0}^{K-1} \Big(\frac{\beta}{1-\beta}\frac{A_k^2}{2}+ \frac{\beta}{1-\beta}\frac{A_k^2}{2} \Big) \nonumber\\
    & \leq \frac{\beta}{1-\beta} \sum_{k=0}^{K-1} A_k^2.
\end{align*}
To prove part (c), first using Young's inequality we have that 
\begin{align}\label{eq:tiple-sum}
    \sum_{k=0}^{K-1} A_k \sum_{i=0}^k \rho^{k-i} \sum_{j=0}^{i-1}\beta^{i-j} A_j &\leq \sum_{k=0}^{K-1}  \sum_{i=0}^k  \sum_{j=0}^{i-1} \rho^{k-i} \beta^{i-j} (A_k^2+A_j^2)/2 \nonumber \\
    &\leq \sum_{k=0}^{K-1}  \left(\frac{\beta}{(1-\beta)(1-\rho)}\frac{A_k^2}{2} +\sum_{i=0}^k  \sum_{j=0}^{i-1} \rho^{k-i} \beta^{i-j} \frac{A_j^2}{2}\right).
\end{align} 
Now, let us define $A'_i\triangleq \sum_{j=0}^{i-1}\beta^{i-j} A_j^2$, then applying part (a) twice we obtain $\sum_{k=0}^{K-1}\sum_{i=0}^k\rho^{k-i}A'_i \leq \frac{1}{(1-\rho)}\sum_{k=0}^{K-1}A'_k=\frac{1}{(1-\rho)}\sum_{k=0}^{K-1}\sum_{i=0}^{k-1}\beta^{k-i} A_i^2\leq \frac{\beta}{(1-\rho)(1-\beta)}\sum_{k=0}^{K-1}A_k^2$. Therefore, using this inequality within \eqref{eq:tiple-sum} the desired result can be concluded.
\end{proof}

\subsection{Proof of Lemma \ref{lem:lower-v}}\label{sec:proof-lemma-lip-v}

Before presenting the proof of Lemma \ref{lem:theta-gradxL-lip}, we show a preliminary result essential in establishing the result of Lemma \ref{lem:theta-gradxL-lip}. In particular, we show that the map $v(x,y)$ defined in \eqref{eq:v} is Lipschitz continuous. 
\begin{proof}
    We start the proof by recalling that $v(x,y) = [\nabla_{\theta\theta}^2 g(x,\theta^{*}(x))]^{-1} \nabla_\theta\Phi(x,\theta^*(x),y)$. Next, adding and subtracting $[\nabla_{\theta\theta}^2g(x,\theta^*(x))]^{-1}\nabla_\theta \Phi(\overline{x},\theta^*(\overline{x}),\overline{y})$ followed by a triangle inequality leads to,
    \begin{align}\label{eq:lower-v-itr}
    \left\| v(x,y)-v(\overline{x},\overline{y}) \right\| &= \left\| [\nabla_{\theta\theta}^2 g(x,\theta^{*}(x))]^{-1} \nabla_\theta\Phi(x,\theta^*(x),y) \right. \nonumber\\
    & \quad - \left. [\nabla_{\theta\theta}^2 g(\overline{x},\theta^{*}(\overline{x}))]^{-1} \nabla_\theta\Phi(\overline{x},\theta^*(\overline{x}),\overline{y}) \right\| \nonumber\\
    & \leq \left\| [\nabla_{\theta\theta}^2 g(x,\theta^{*}(x))]^{-1} (\nabla_\theta\Phi(x,\theta^*(x),y) - \nabla_\theta\Phi(\overline{x},\theta^*(\overline{x}),\overline{y}) )\right\| \nonumber\\
    & \quad + \left\| ([\nabla_{\theta\theta}^2 g(x,\theta^{*}(x))]^{-1} -[\nabla_{\theta\theta}^2 g(\overline{x},\theta^{*}(\overline{x}))]^{-1})\nabla_\theta\Phi(\overline{x},\theta^*(\overline{x}),\overline{y})\right\| \nonumber\\
    & \leq \frac{1}{\mu_g}(L_{\theta x}^\Phi \left\| x - \overline{x} \right\| + L_{\theta \theta}^\Phi \left\| \theta^*(x) - \theta^*(\overline{x}) \right\| + L_{\theta y}^\Phi \left\|\overline{y}-y \right\|) \nonumber\\
    & \quad + C_\theta^\Phi \left\| [\nabla_{\theta\theta}^2 g(x,\theta^{*}(x))]^{-1} - [\nabla_{\theta\theta}^2 g(\overline{x},\theta^{*}(\overline{x}))]^{-1} \right\|,
    \end{align}
    where in the last inequality, we used Assumptions \ref{assump:grad-xytheta-lip} and \ref{assump:g-conditions}-(3) along with the premises of Assumption \ref{assump:grad-phi-bounded}. Moreover, for any invertible matrices $H_1$ and $H_2$, we have that
    \begin{align}\label{eq:inv-H-bound}
    \left\| H_2^{-1} - H_1^{-1} \right\| = \left\| H_1^{-1}(H_1-H_2)H_2^{-1} \right\|\leq \left\| H_1^{-1} \right\|\left\| H_2^{-1} \right\|\left\| H_1-H_2 \right\|.
    \end{align}
    Therefore, using the result of Lemma \ref{lem:theta-gradxL-lip}-(I) and \eqref{eq:inv-H-bound}, we can further bound inequality \eqref{eq:lower-v-itr} as follows,
    \begin{align*}
        \left\| v(x,y)-v(\overline{x},\overline{y}) \right\| &\leq \frac{1}{\mu_g}(L_{\theta x}^\Phi \left\| x - \overline{x} \right\| + L_{\theta \theta}^\Phi \bL_{\theta} \left\| x - \overline{x} \right\| + L_{\theta y}^\Phi \left\|\overline{y}-y \right\|) \nonumber\\
        & \quad + C_\theta^\Phi \left\| [\nabla_{\theta\theta}^2 g(x,\theta^{*}(x))]^{-1} - [\nabla_{\theta\theta}^2 g(\overline{x},\theta^{*}(\overline{x}))]^{-1} \right\| \nonumber\\
        & \leq \frac{1}{\mu_g} (L_{\theta x}^\Phi + L_{\theta \theta}^\Phi\bL_\theta)\left\| x - \overline{x} \right\| + \frac{1}{\mu_g}L_{\theta y}^\Phi \left\|\overline{y}-y \right\| \nonumber\\
        & \quad + \frac{C_\theta^\Phi}{\mu^2_g} L_{\theta\theta}^g (\left\| x - \overline{x} \right\| + \left\| \theta^*(x) - \theta^*(\overline{x}) \right\|) \nonumber\\
        & \leq (\frac{L_{\theta x}^\Phi + L_{\theta \theta}^\Phi\bL_\theta}{\mu_g} + \frac{C_\theta^\Phi L_{\theta\theta}^g}{\mu^2_g} (1+\bL_\theta))\left\| x - \overline{x} \right\| + \frac{1}{\mu_g}L_{\theta y}^\Phi \left\|\overline{y}-y \right\|.
    \end{align*}
    The result follows from the definitions of $\bC_{v1}$ and $\bC_{v2}$.
\end{proof}
\subsection{Proof of Lemma \ref{lem:theta-gradxL-lip}}\label{sec:proof-lemma-basic}
\textbf{Part (I):} Recall that $\theta^*(x)$ is the minimizer of the lower-level problem whose objective function is strongly convex, therefore
\begin{align*}
\mu_g \left\| \theta^*(x) - \theta^*(\overline{x}) \right\|^2 &\leq \langle \nabla_\theta g(x,\theta^*(x)) - \nabla_\theta g(x,\theta^*(\overline{x})),\theta^*(x) - \theta^*(\overline{x}) \rangle \\
& = \langle \nabla_\theta g(\overline{x},\theta^*(\overline{x})) - \nabla_\theta g(x,\theta^*(\overline{x})),\theta^*(x) - \theta^*(\overline{x}) \rangle .
\end{align*}
Note that $\nabla_\theta g(x,\theta^*(x)) = \nabla_\theta g(\overline{x},\theta^*(\overline{x})) = 0$. Using the Cauchy-Schwartz inequality, we have
\begin{align*}
\mu_g \left\| \theta^*(x) - \theta^*(\overline{x}) \right\|^2 &\leq \left\| \nabla_\theta g(\overline{x},\theta^*(\overline{x})) - \nabla_\theta g(x,\theta^*(\overline{x}))\right\| \left\|\theta^*(x) - \theta^*(\overline{x}) \right\| \\
& \leq C_{\theta x}^g \left\| x - \overline{x} \right\| \left\| \theta^*(x) - \theta^*(\overline{x}) \right\|, 
\end{align*}
where the last inequality is obtained by using Assumption \ref{assump:g-conditions}. Thus, we conclude that $\mu_g \left\| \theta^*(x) - \theta^*(\overline{x}) \right\| \leq C_{\theta x}^g \left\| x - \overline{x} \right\|$ which leads to the desired result in part (I).

\textbf{Part (II):} We start proving this part using the definition of $\nabla_x \mathcal{L}(x,y)$ stated in \eqref{eq:grad-x-L}. Using the triangle inequality, we obtain,
\begin{align*}
    \left\| \nabla_x\mathcal{L}(x,y) - \nabla_x\mathcal{L}(\overline{x},\overline{y}) \right\| &= \left\| \nabla_x \Phi(x,\theta^*(x),y) -\nabla_{\theta x}^2 g(x,\theta^{*}(x)) v(x,y) \right. \nonumber\\
    & \quad- \left. (\nabla_x \Phi(\overline{x},\theta^*(\overline{x}),\overline{y}) -\nabla_{\theta x}^2 g(\overline{x},\theta^{*}(\overline{x})) v(\overline{x},\overline{y})\right\| \nonumber\\
    & \leq \left\| \nabla_x \Phi(x,\theta^*(x),y) - \nabla_x \Phi(\overline{x},\theta^*(\overline{x}),\overline{y}) \right\| \nonumber\\
    & \quad +\left\| [\nabla_{\theta x}^2 g(\overline{x},\theta^{*}(\overline{x})) v(\overline{x},\overline{y}) - \nabla_{\theta x}^2 g(\overline{x},\theta^{*}(\overline{x})) v(x,y)] \right. \nonumber\\
    & \quad + \left. [\nabla_{\theta x}^2 g(\overline{x},\theta^{*}(\overline{x})) v(x,y) - \nabla_{\theta x}^2 g(x,\theta^{*}(x)) v(x,y)] \right\|,
\end{align*}
where the second term of the RHS follows from adding and subtracting the term $\nabla_{\theta x}^2 g(\overline{x},\theta^{*}(\overline{x})) v(x,y)$.\\
Next, from Assumptions \ref{assump:grad-xytheta-lip}-(1) and \ref{assump:g-conditions}-(2) together with the triangle inequality, we conclude that
\begin{align*}
    \left\| \nabla_x\mathcal{L}(x,y) - \nabla_x\mathcal{L}(\overline{x},\overline{y}) \right\| &\leq L_{xx}^\Phi \left\| x - \overline{x} \right\| + L_{x\theta}^\Phi \left\| \theta^*(x) - \theta^*(\overline{x})\right\| + L_{xy}^\Phi \left\|\overline{y}-y \right\| \nonumber\\
    & \quad + C_{\theta x}^g \left\| v(\overline{x},\overline{y}) - v(x,y) \right\| + \frac{C_\theta^\Phi}{\mu_g}  \left\|\nabla_{\theta x}^2 g(\overline{x},\theta^{*}(\overline{x}))- \nabla_{\theta x}^2 g(x,\theta^{*}(x)) \right\|.
\end{align*}
Note that in the last inequality, we use the fact that $\left\| v(x,y) \right\| = \left\| [\nabla_{\theta\theta}^2 g(x,\theta^{*}(x))]^{-1} \nabla_\theta\Phi(x,\theta^*(x),y)  \right\| \leq \frac{C_\theta^\Phi}{\mu_g}$.\\
Combining the result of Lemma \ref{lem:theta-gradxL-lip}-(I) and Assumption \ref{assump:grad-phi-bounded} with the Assumption \ref{assump:g-conditions}-(4) leads to
\begin{align*}
    \left\| \nabla_x\mathcal{L}(x,y) - \nabla_x\mathcal{L}(\overline{x},\overline{y}) \right\| &\leq L_{xx}^\Phi \left\| x - \overline{x} \right\| + L_{x\theta}^\Phi \bL_\theta \left\| x - \overline{x}\right\| + L_{xy}^\Phi \left\|\overline{y}-y \right\| \nonumber\\
    & \quad + C_{\theta x}^g (\bC_{v1}\left\| x -\overline{x} \right\| + \bC_{v2}\left\| y -\overline{y} \right\|) \nonumber\\
    & \quad + \frac{C_\theta^\Phi}{\mu_g} L_{\theta x}^g (\left\|\ x - \overline{x}\right\| + \left\|\ \theta^*(x) - \theta^*(\overline{x})\right\|) \nonumber\\
    & \leq L_{xx}^\Phi \left\| x - \overline{x} \right\| + L_{x\theta}^\Phi \bL_\theta \left\| x - \overline{x}\right\| + L_{xy}^\Phi \left\|\overline{y}-y \right\| \nonumber\\
    & \quad + C_{\theta x}^g (\bC_{v1}\left\| x -\overline{x} \right\| + \bC_{v2}\left\| y -\overline{y} \right\|) \nonumber\\
    & \quad + \frac{C_\theta^\Phi}{\mu_g} L_{\theta x}^g (\left\|\ x - \overline{x}\right\| + \bL_\theta \left\| x - \overline{x}\right\|) \nonumber\\
    & \leq \Big(L_{xx}^\Phi  + L_{x\theta}^\Phi \bL_\theta + C_{\theta x}^g \bC_{v1} + \frac{C_\theta^\Phi}{\mu_g} L_{\theta x}^g (1 + \bL_\theta)\Big) \left\| x - \overline{x}\right\| \nonumber\\
    & \quad + (L_{xy}^\Phi + C_{\theta x}^g \bC_{v2})\left\| y -\overline{y} \right\|,
\end{align*}
where $\bL_{\mathcal L_1} = (L_{xx}^\Phi  + L_{x\theta}^\Phi \bL_\theta + C_{\theta x}^g \bC_{v1} + \frac{C_\theta^\Phi}{\mu_g} L_{\theta x}^g (1 + \bL_\theta))$ and $\bL_{\mathcal L_2} = (L_{xy}^\Phi + C_{\theta x}^g \bC_{v2})$.

\textbf{Part (III):} Using the definition of $\nabla_y\mathcal{L}(x,y)$ in \eqref{eq:grady-L} and Assumption \ref{assump:grad-xytheta-lip}-(3), we can show
\begin{align*}
    \|\nabla_y \mathcal{L}(x,y) - \nabla_y \mathcal{L}(\overline{x},\overline{y}) \| & = \|\nabla_y \Phi(x,\theta^*(x),y)-\nabla_y \Phi(\overline{x},\theta^*(x),\overline{y})\| \nonumber\\
    & \leq L_{yx}^\Phi \left\| x - \overline{x} \right\| + L_{y\theta}^\Phi \|\theta^*(x) - \theta^*(\overline{x})\|+ L_{yy}^\Phi \left\|y-\overline{y} \right\| \nonumber\\
    & \leq L_{yx}^\Phi \left\| x - \overline{x} \right\| + L_{y\theta}^\Phi \bL_\theta \left\| x - \overline{x} \right\| + L_{yy}^\Phi \left\|y-\overline{y} \right\| \nonumber\\
    & \leq (L_{yx}^\Phi + L_{y\theta}^\Phi \bL_\theta) \left\| x - \overline{x} \right\| + L_{yy}^\Phi \left\|y-\overline{y} \right\|,
\end{align*}
where the last inequality is obtained by using Lemma \ref{lem:theta-gradxL-lip}-(I).

\bibliographystyle{siam}
\bibliography{reference}
\end{document}